\titlespacing*{\section}{0pt}{14pt}{4pt}
\titlespacing*{\subsection}{0pt}{8pt}{3pt}
\patchcmd{\ttlh@hang}{\parindent\z@}{\parindent\z@\leavevmode}{}{}
\patchcmd{\ttlh@hang}{\noindent}{}{}{}
\def\maketimestamp{\count255=\time
\divide\count255 by 60\relax
\edef\thetime{\the\count255:}%
\multiply\count255 by-60\relax
\advance\count255 by\time
\edef\thetime{\thetime\ifnum\count255<10 0\fi\the\count255}
\edef\thedate{\number\day-\ifcase\month\or Jan\or Feb\or Mar\or
             Apr\or May\or Jun\or Jul\or Aug\or Sep\or Oct\or
             Nov\or Dec\fi-\number\year}
\def\timstamp{\hbox to\hsize{\tt\hfil\thedate\hfil\thetime\hfil}}}
\theoremstyle{plain}
\newtheorem{theorem}{Theorem}[section]
\newtheorem{proposition}[theorem]{Proposition}
\newtheorem*{proposition*}{Proposition}
\newtheorem{lemma}[theorem]{Lemma}
\theoremstyle{definition}
\newtheorem{definition}[theorem]{Definition}
\theoremstyle{remark}
\newtheorem{remark}[theorem]{Remark}
\newtheorem{example}[theorem]{Example}
\theoremstyle{remark}
\newtheorem*{rem*}{Remark}
\numberwithin{equation}{section}
\newcommand{\CalN}{\mathcal{N}}
\newcommand{\CalW}{\mathcal{W}}
\newcommand{\CalF}{\mathcal{F}}
\newcommand{\CalH}{\mathcal{H}}
\newcommand{\CalG}{\mathcal{G}}
\newcommand{\CalM}{\mathcal{M}}
\newcommand{\linspan}{\operatorname{span}}
\newcommand{\NN}{\mathcal{NN}}
\newcommand{\id}{\mathrm{id}}
\DeclareMathOperator\arctanh{arctanh}
\DeclareMathOperator\arcsinh{arcsinh}
\def\supp{\mathop{\operatorname{supp}}}
\newcommand{\R}{\mathbb{R}}
\newcommand{\N}{\mathbb{N}}
\newcommand{\Z}{\mathbb{Z}}
\newcommand{\CC}{\mathbb{C}}
\newcommand{\eps}{\varepsilon}
\renewcommand{\Re}{\operatorname{Re}}
\renewcommand{\Im}{\operatorname{Im}}
\newcommand\mydots{\makebox[1em][c]{.\hfil.\hfil.}}
\newcommand{\overbar}[1]{\mkern 1.5mu\overline{\mkern-1.5mu#1\mkern-1.5mu}\mkern 1.5mu}
\newcommand{\del}{\partial}
\newcommand{\delbar}{\overbar{\del}}
\newcommand{\Indicator}{{\mathds{1}}}
\newcommand{\CalD}{\mathcal{D}}
\newcommand{\vertiii}[1]{{\left\vert \kern-0.25ex
                            \left\vert \kern-0.25ex
                              \left\vert #1\right\vert\kern-0.25ex
                            \right\vert \kern-0.25ex
                          \right\vert}}
\newcommand{\FirstN}[1]{\underline{#1}}
\newcommand{\loc}{\mathrm{loc}}
\def\blfootnote{\xdef\@thefnmark{}\@footnotetext}
\def\subjclass{\xdef\@thefnmark{}\@footnotetext}
\long\def\symbolfootnote[#1]#2{\begingroup%
\def\thefootnote{\fnsymbol{footnote}}\footnote[#1]{#2}\endgroup}
  \renewenvironment{abstract}{%
      \titlepage
      \null\vfil
      \@beginparpenalty\@lowpenalty
      \begin{center}%
        \bfseries \abstractname
        \@endparpenalty\@M
      \end{center}}%
     {\par\vfil\null\endtitlepage}
  \renewenvironment{abstract}{%
      \if@twocolumn
        \section*{\abstractname}%
      \else
        \small
        \list{}{%
          \settowidth{\labelwidth}{\textbf{\abstractname:}}
          \setlength{\leftmargin}{50pt}
          \setlength{\rightmargin}{50pt}
          \setlength{\itemindent}{\labelwidth}
          \addtolength{\itemindent}{\labelsep}
        }
        \item[\textbf{\abstractname:}]

      \fi}
      {\if@twocolumn\else\endlist\fi}
\let\emptyset\varnothing
\begin{document}

\begin{NoHyper}
\blfootnote{2020 {\it Mathematics Subject Classification.}
68T07, 
41A30, 
41A63, 
31A30, 
30E10. 
}
\blfootnote{{\it Key words and phrases.}
Complex-valued neural networks,
Universal approximation theorem,
Deep neural networks,
Polyharmonic functions,
holomorphic functions}
\blfootnote{{\it Address:}
Department of Mathematics, Technical University of Munich, 85748 Garching bei München, Germany
\emph{and} Faculty of Mathematics, University of Vienna, Oskar-Morgenstern-Platz~1, 1090~Vienna, Austria.}
\blfootnote{{\it Current Address:}
Mathematical Institute for Machine Learning and Data Science (MIDS),
Catholic University of Eichstätt-Ingolstadt (KU),
Auf der Schanz 49,
85049 Ingolstadt,
Germany}
\blfootnote{{\it Email address:} \texttt{felix.voigtlaender@ku.de}}
\end{NoHyper}

\title{The universal approximation theorem\\{} for complex-valued neural networks}
\author{Felix Voigtlaender}

\date{}

\maketitle

\begin{abstract}
  We generalize the classical universal approximation theorem for neural networks
  to the case of \emph{complex-valued} neural networks.
  Precisely, we consider feedforward networks with a complex activation function
  $\sigma : \CC \to \CC$ in which each neuron performs
  the operation $\CC^N \to \CC, z \mapsto \sigma(b + w^T z)$ with weights $w \in \CC^N$
  and a bias $b \in \CC$, and with $\sigma$ applied componentwise.
  We \emph{completely characterize} those activation functions $\sigma$ for which the associated
  complex networks have the universal approximation property, meaning that they
  can uniformly approximate any continuous function on any compact subset of $\CC^d$
  arbitrarily well.
  Unlike the classical case of real networks, the set of ``good activation functions''---%
  which give rise to networks with the universal approximation property---%
  differs significantly depending on whether one considers deep networks or shallow networks:
  For deep networks with at least two hidden layers, the universal approximation property holds
  as long as $\sigma$ is neither a polynomial, a holomorphic function, or an antiholomorphic function.
  Shallow networks, on the other hand, are universal if and only if
  the real part or the imaginary part of $\sigma$ is \emph{not} a polyharmonic function.
\end{abstract}



\section{Introduction}%
\label{sec:Introduction}

Deep neural networks form the backbone for the recent success \cite{LeCunDeepLearningNature}
of \emph{deep learning} in applications like image recognition \cite{KrizhevskyImagenet}
and language translation \cite{SutskeverMachineTranslation}.
The currently employed networks are mostly \emph{real-valued},
meaning that the network weights and the activation function $\sigma : \R \to \R$ are real.
But for applications in which the inputs are naturally complex-valued---for instance
for MRI fingerprinting \cite{LustigComplexNNForMRI}---complex-valued networks (CVNNs)
\cite{ArenaNNInMultidimensionalDomains} are a natural choice
and often perform better than their real-valued counterparts
\cite{LustigComplexNNForMRI,WolterComplexGatedRNNs,TrabelsiDeepComplexNetworks}.

In such a complex-valued neural network, each neuron computes a function of the form
${\CC^N \ni z \mapsto \sigma(b + w^T z)}$ with a complex bias $b \in \CC$
and complex weights $w \in \CC^N$, and a complex-valued activation function
$\sigma : \CC \to \CC$ that is applied componentwise.
Despite having been somewhat neglected in the wake of the deep learning revolution,
such complex-valued networks are receiving increased attention in recent years
\cite{LustigComplexNNForMRI,WolterComplexGatedRNNs,TrabelsiDeepComplexNetworks,
LeCunMotivationForComplexCNNs}.

The expressivity of real-valued neural networks is by now quite well understood.
Classical results in this area of research are the \emph{universal approximation theorem}
\cite{CybenkoUniversalApproximation,HornikStinchcombeWhiteUniversal,
HornikUniversalApproximation,PinkusUniversalApproximation}%
---stating that sufficiently wide (shallow) neural networks can approximate any (continuous)
function---and more quantitative results like the bounds by Mhaskar
\cite{MhaskarSmoothActivationOptimalRate} regarding the approximation of $C^n$
functions using shallow networks with smooth activation functions.
More recent research \cite{YarotskyTraditionalReLUBounds,PetersenOptimalApproximation,
YarotskyPhaseDiagram,LuReLUFiniteDepthUniform}
has focused on similar quantitative results for \emph{deep} networks
with \emph{non-smooth} activation functions, in particular the ReLU $\varrho(z) = \max \{ 0, z \}$.
This is because deep networks perform better in applications \cite{LeCunDeepLearningNature},
and the ReLU has been shown to lead to faster training \cite{GlorotRectifierNetworks}.

Concerning the approximation properties of complex-valued networks,
on the other hand, the literature is quite scarce.
To the best of our knowledge, the only available results are purely qualitative versions
of the universal approximation theorem, and even these are only available
for very special activation functions;
see \cite{ArenaApproximationCapabilityOfComplexNeuralNetworks,ArenaMLPToApproximateComplexValuedFunctions}
and \cite[Chapter~2]{ArenaNNInMultidimensionalDomains},
as well as \cite[Chapter~5]{HiroseCliffordBook}.
We refer to \Cref{sub:RelatedWork} for a more detailed description
of these and other related articles.

Our goal in this article is to initiate a more in-depth study of the approximation properties
of these networks, by providing a \emph{full characterization} of those complex
activation functions $\sigma : \CC \to \CC$ which lead to network classes that are \emph{universal},
in the sense that, for a \emph{fixed} network depth $L \in \N$,
\emph{every} continuous function $f : \CC^d \to \CC$ can be approximated
arbitrarily well (uniformly on compact sets) by sufficiently wide complex-valued
neural networks of depth $L$ using the activation function $\sigma$.
In other words, the question is whether the set $\NN_{\sigma,L}^d$
of all (arbitrarily wide) complex-valued neural networks
with $L$ hidden layers and activation function $\sigma$ is dense
in $C(\CC^d;\CC)$ in the topology of locally uniform convergence.
For the case of \emph{real-valued} networks, such a characterization is surprisingly simple
\cite{PinkusUniversalApproximation}: these networks are universal if and only if the
activation function $\sigma : \R \to \R$ does not coincide with a polynomial (almost everywhere).
This characterization is quite natural---at least as a necessary condition---since if $\sigma$
is a polynomial of degree $N$, then each function implemented by a network with $L$ hidden layers
and activation $\sigma$ is a polynomial of degree at most $N^L$,
\emph{irrespective of the width of the network}.
This clearly rules out universality, since $L$ is considered fixed%
\footnote{
  As a side-note, we remark that if one considers real-valued neural networks
  of \emph{fixed (but sufficiently large) width $W \geq W_0(d)$}
  but \emph{of arbitrary depth $L$}, then (essentially)
  every \emph{non-affine} activation function $\sigma : \R \to \R$
  (including non-affine \emph{polynomials}) leads to a universal class of networks;
  this was shown in \cite{DeepNarrowNetworksUniversalApproximation}.
}.

But as already noted in \cite{ArenaApproximationCapabilityOfComplexNeuralNetworks},
in the complex domain more subtle obstructions to universality appear.
In fact, the results in \cite{ArenaApproximationCapabilityOfComplexNeuralNetworks}
show that a \emph{holomorphic} (entire) activation function can never give rise
to universal sets of networks.
Some authors (see for instance \cite{KimApproximationWithFullyComplexNN}) have tried to circumvent
this obstruction by using holomorphic functions \emph{with singularities}
as activation functions.
However, even this approach does not succeed in general, as shown in the following result.
For a detailed discussion of the findings in \cite{KimApproximationWithFullyComplexNN},
we refer to \Cref{sub:RelatedWork}.

\begin{proposition}\label{prop:IntroHolomorphicIsBad}
  Let $D \subset \CC$ be closed and discrete, and let $\sigma : \CC \setminus D \to \CC$ be holomorphic.
  A weight-tuple $\Theta = \big( (\alpha_1,w_1,b_1),\dots,(\alpha_N,w_N,b_N) \big) \in (\CC^3)^N$
  is called \emph{admissible} if for each $j \in \{ 1,\dots,N \}$
  we have $w_j \neq 0$ or $b_j \notin D$.
  In this case, define
  \[
    D_\Theta := \bigcup_{j=1}^N \bigl\{ z \in \CC \colon w_j z + b_j \in D \bigr\}
    \qquad \text{and} \qquad
    \Phi_\Theta : \quad
    \CC \setminus D_\Theta \to \CC, \quad
    z \mapsto \sum_{j=1}^N \alpha_j \, \sigma(b_j + w_j z) .
  \]
  There exists some $\eps > 0$ and a function $f \in C_c(\CC; \CC)$ (both independent of $N \in \N$)
  satisfying
  \[
    \inf_{\substack{N \in \N, \\ \Theta \in (\CC^3)^N \text{ admissible}}} \quad
      \sup_{z \in B_1(0) \setminus D_\Theta} \quad
        \bigl|f(z) - \Phi_\Theta (z)\bigr|
    \geq \eps
    \qquad \text{where} \qquad
    B_1(0) = \bigl\{ z \in \CC \colon |z| < 1 \bigr\} .
  \]
\end{proposition}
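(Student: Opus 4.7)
The plan is to exploit that every function $\Phi_\Theta$ is a finite sum of holomorphic functions and thus holomorphic on a cofinite (in fact co-discrete) subset of $\CC$, so that after removable singularities the networks reduce to genuine holomorphic functions on the disk. Any uniform limit must then be holomorphic as well, which is impossible for a target like $f(z)=\bar z$.

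In more detail: for a fixed admissible $\Theta$, if $w_j\neq 0$ then $\{z\in\CC\colon w_j z+b_j\in D\}$ is a closed discrete subset of $\CC$, while if $w_j=0$ the corresponding summand $\alpha_j\,\sigma(b_j)$ is a constant (because admissibility forces $b_j\notin D$). Hence $D_\Theta$ is closed and discrete, and $\Phi_\Theta$ is holomorphic on $\CC\setminus D_\Theta$. Fix a smooth cutoff $\chi\in C_c(\CC;\R)$ with $\chi\equiv 1$ on $\overline{B_1(0)}$ and define $f(z):=\chi(z)\cdot\overline{z}$, so that $f\in C_c(\CC;\CC)$ and $f(z)=\bar z$ for $z\in\overline{B_1(0)}$.

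Suppose, towards a contradiction, that the claimed $\eps$ does not exist; then for every $n\in\N$ we can pick an admissible $\Theta_n$ with
\[
  \eps_n := \sup_{z\in B_1(0)\setminus D_{\Theta_n}} \bigl|f(z)-\Phi_{\Theta_n}(z)\bigr| \;\longrightarrow\; 0 .
\]
On $B_1(0)\setminus D_{\Theta_n}$, the function $\Phi_{\Theta_n}$ is holomorphic and bounded in modulus by $\|f\|_{\sup}+\eps_n$, so Riemann's removable singularity theorem produces a holomorphic extension $\widetilde\Phi_n\colon B_1(0)\to\CC$. For any $z_0\in D_{\Theta_n}\cap B_1(0)$, letting $z\to z_0$ through $B_1(0)\setminus D_{\Theta_n}$ and using continuity of both $\widetilde\Phi_n$ and $f$, we obtain $|\widetilde\Phi_n(z_0)-f(z_0)|\le\eps_n$. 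Thus $\widetilde\Phi_n\to f$ \emph{uniformly} on all of $B_1(0)$.

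By the Weierstrass convergence theorem, the uniform limit of holomorphic functions on $B_1(0)$ is holomorphic, so $f|_{B_1(0)}=\bar z$ would have to be holomorphic on $B_1(0)$. This contradicts, for example, $\partial_{\bar z}\bar z=1\neq 0$, completing the proof. The only subtle point — and the main thing to get right — is step involving the removable singularities and the extension of the uniform estimate from $B_1(0)\setminus D_{\Theta_n}$ to all of $B_1(0)$; once that passage is justified, the contradiction with Weierstrass is immediate.
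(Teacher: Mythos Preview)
Your proof is correct and follows essentially the same approach as the paper: both argue by contradiction, use Riemann's removable singularity theorem to extend each $\Phi_{\Theta_n}$ holomorphically across $D_{\Theta_n}\cap B_1(0)$, pass the uniform estimate to all of $B_1(0)$ by continuity, and then invoke that locally uniform limits of holomorphic functions are holomorphic. The only cosmetic difference is the choice of target function---the paper uses $f(z)=\max\{0,1-|z|\}$ and contradicts smoothness, whereas you use (a cutoff of) $\bar z$ and contradict holomorphy directly.
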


\begin{rem*}
  The assumption of admissibility ensures that the set $D_\Theta$ on which
  $\Phi_\Theta$ is not well-defined is discrete.
\end{rem*}

\begin{proof}
  See \Cref{sub:ProofOfHolomorphicIsBad}.
\end{proof}

\begin{remark}\label{rem:MostlyHolomorphicCanBeUniversal}
  \Cref{prop:IntroHolomorphicIsBad} only considers holomorphic functions with \emph{isolated}
  singularities; the proof crucially uses that such a singularity is either removable,
  or that otherwise the function is \emph{unbounded} on each neighborhood of the singularity.

  Still, there are \emph{universal} activation functions $\sigma : \CC \to \CC$ that are holomorphic
  on \emph{most} of their domain.
  To give an example, let $\CC_- = \CC \setminus (-\infty,0]$, and denote by
  $\operatorname{Log} : \CC_- \to \CC$ the principal branch of the complex logarithm,
  given by $\operatorname{Log} (r \, e^{i \theta}) = \ln(r) + i \theta$, where $r > 0$
  and $\theta \in (-\pi,\pi)$.
  Then the function
  \[
    \sigma : \quad
    \CC \to \CC, \quad
    z \mapsto \begin{cases}
                z \cdot \operatorname{Log}(z), & \text{if } z \in \CC_-,  \\
                0                              & \text{otherwise}
              \end{cases}
  \]
  is holomorphic on $\CC_-$.
  In particular, this implies that the closure of the set of discontinuities of $\sigma$
  is a subset of $(-\infty,0]$, and hence a null-set when considered as a subset of $\CC$.
  Furthermore, $\sigma$ is locally bounded (thanks to the factor $z$ compensating the blow-up
  of the logarithm at the origin), and it is easy to see that there does \emph{not}
  exist a smooth function ${g : \CC \to \CC}$ satisfying $\sigma = g$ almost everywhere.
  Therefore, \Cref{thm:ShallowUniversalApproximation} below shows that the set of shallow
  complex-valued neural networks with activation function $\sigma$ has the universal approximation
  property.
\end{remark}

\Cref{prop:IntroHolomorphicIsBad} shows that the question of ``universal activation functions''
is much more subtle in the complex domain than for real networks.
Nevertheless, we explicitly characterize those activation functions $\sigma : \CC \to \CC$
for which the associated sets of networks have the universal approximation property;
see the next subsection.
Surprisingly, the class of ``good'' activation functions for \emph{shallow} networks
differs from the class of activation functions that yield universality for \emph{deep} networks.
This is again in stark contrast to the real-valued case.

\subsection{Our results in a nutshell}%
\label{sub:IntroductionResults}

In order to state our results concisely, we first introduce a few notations and conventions.
Similar to \cite{PinkusUniversalApproximation}, we restrict our attention to activation functions
satisfying a minimal continuity property.
Precisely, let us write $\CalM$ for the class of all functions $\sigma : \CC \to \CC$
that are locally bounded and such that \emph{the closure} of the set
$\{ z \in \CC \,\colon\, \sigma \text{ not continuous at } z \}$
is a null-set, in the sense of the Lebesgue measure on $\CC \cong \R^2$.

We do \emph{not} identify functions that agree almost everywhere,
since we will need to consider compositions of such functions,
and it is \emph{not} true in general that $f_1 \circ g_1 = f_2 \circ g_2$ almost everywhere
if $f_1 = f_2$ and $g_1 = g_2$ almost everywhere.
Given this convention, the following notation is natural:
For $\emptyset \neq \Omega \subset \CC^d$ and $f : \Omega \to \CC$, we write
\begin{equation}
  \| f \|_{L^\infty(\Omega)}
  := \sup_{z \in \Omega}
       |f(z)| .
  \label{eq:SpecialLInftyNorm}
\end{equation}
We say that a set $\CalF \subset \{ f : \CC^d \to \CC \}$ has the
\emph{universal approximation property} if for every continuous function $\varphi : \CC^d \to \CC$,
every compact set $K \subset \CC^d$ and every $\eps > 0$ there exists $f \in \CalF$
satisfying $\sup_{z \in K} |\varphi(z) - f(z)| \leq \eps$.

Finally, unless explicitly mentioned otherwise, smoothness and differentiability are always
understood in the sense of real variables (and \emph{not} in the sense of holomorphic functions),
under the usual identification $\CC^d \cong \R^{2 d}$.
Then, we say that a function $\sigma : \CC \to \CC$ is \emph{almost polyharmonic} if there exist
$m \in \N$ and ${g \in C^\infty(\CC; \CC)}$ with $\Delta^m g \equiv 0$
such that $\sigma = g$ almost everywhere.
Here, $\Delta = \frac{\partial^2}{\partial x^2} + \frac{\partial^2}{\partial y^2}$
is the usual Laplace operator on $\CC \cong \R^2$.

Our first main results provides a complete characterization of those activation functions
for which the associated class $\NN_\sigma^d \subset \{ f : \CC^d \to \CC \}$ of
\emph{shallow} complex-valued neural networks with activation function $\sigma$, given by
\[
  \NN_\sigma^d
  := \!
     \Big\{
       z \mapsto c +
                 \sum_{j=1}^N
                   a_j \, \sigma\bigl(b_j + w_j^T z\bigr)
       \,\, \colon \,\,
       N \in \N, \,
       a_1, b_1, \mydots, a_N, b_N, c \in \CC
       \text{ and } w_1, \mydots, w_N \in \CC^d
     \Big\} ,
\]
is universal.

\begin{theorem}\label{thm:ShallowUniversalApproximation}
  Let $\sigma \in \CalM$ and $d \in \N$ be arbitrary.
  Then the set ${\NN_\sigma^d \subset \{ f : \CC^d \to \CC \}}$ of shallow complex-valued
  neural networks with activation function $\sigma$ has the universal approximation property
  \emph{if and only if} $\sigma$ is \emph{not} almost polyharmonic.
\end{theorem}

\begin{rem*}
  The theorem implies that the class of ``good'' activation functions is quite rich.
  For example, it is well-known (see for instance \cite[Remark~2]{HuilgolBiharmonicLiouville})
  that bounded polyharmonic functions are necessarily constant.
  Thus, the theorem shows that if $\sigma \in \CalM$ is bounded but not equal to a constant
  (almost everywhere), then $\NN_{\sigma}^d$ is universal.

  Furthermore, the theorem shows that any ``bad'' activation function $\sigma$
  necessarily has to be smooth (possibly after changing it on a null-set).
  Thus, any activation function that can not be made smooth by changing it on a null-set
  is a ``universal'' activation function.
\end{rem*}

Let us denote by $\NN_{\sigma,L}^d$ the set of complex-valued feedforward neural networks
with activation function $\sigma$, input dimension $d$, and $L$ hidden layers;
a precise definition is given in \Cref{def:DeepComplexNetworks} below.
The following theorem characterizes the activation functions $\sigma$
for which the deep network class $\NN_{\sigma,L}^d$ ($L \geq 2$) is universal.
Contrary to the real setting, there are strictly more ``universal'' activation functions
for deep networks than for shallow ones.

\begin{theorem}\label{thm:DeepUniversalApproximationIntroduction}
  Let $\sigma \in \CalM$ and assume that \emph{none} of the following properties hold:
  \begin{enumerate}[label=\alph*)]
    \item we have $\sigma (z) = p(z, \overline{z})$ for almost all $z \in \CC$,
          where $p \in \CC[X,Y]$ is a complex polynomial of two variables,

    \item we have $\sigma = g$ almost everywhere or $\sigma = \overline{g}$ almost everywhere,
          where $g : \CC \to \CC$ is an entire holomorphic function.
  \end{enumerate}
  Then, for each $L \in \N_{\geq 2}$ and each $d \in \N$, the class $\NN_{\sigma,L}^d$
  of deep complex-valued neural networks with activation function $\sigma$ and $L$ hidden layers
  has the universal approximation property.

  Conversely, if $\sigma : \CC \to \CC$ is continuous and satisfies a) or b),
  then $\NN_{\sigma,L}^d$ does \emph{not} satisfy the universal approximation property
  for any $d, L \in \N$.
\end{theorem}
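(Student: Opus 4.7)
\emph{Necessity.} For case (a), an induction on $L$ shows that every function in $\NN_{\sigma,L}^d$ agrees everywhere (using continuity of $\sigma$) with a polynomial in the $2d$ variables $(z_j, \overline{z_j})_{j=1}^d$ of degree at most $(\deg p)^L$; the inductive step uses that if $f$ is polynomial in $(z, \overline{z})$, then so is $\sigma(b + w^T f) = p\bigl(b + w^T f,\ \overline{b} + \overline{w}^T \overline{f}\bigr)$. Hence $\NN_{\sigma,L}^d$ lies in a finite-dimensional subspace of $C(K, \CC)$ and cannot be dense there, e.g.\ it cannot approximate $z \mapsto e^{|z|^2}$. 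For case (b), the same induction shows that a holomorphic $\sigma$ produces only holomorphic network functions in $z \in \CC^d$, while an antiholomorphic $\sigma$ produces functions that are antiholomorphic at odd depth and holomorphic at even depth (because the composition of two antiholomorphic maps is holomorphic, and complex-affine combinations preserve the (anti)holo\-morphic type). By Weierstrass's convergence theorem, the uniform closure of such families cannot contain $z \mapsto \overline{z}$ (respectively $z + \overline{z}$), ruling out density in $C(K, \CC)$.

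\emph{Sufficiency, non-polyharmonic case.} If $\sigma$ is not almost polyharmonic, \Cref{thm:ShallowUniversalApproximation} already gives universality of $\NN_{\sigma,1}^d$. I would upgrade this to $\NN_{\sigma,L}^d$ for $L \geq 2$ by embedding each shallow network into a depth-$L$ architecture via approximate identity layers: since $\sigma$ is not a.e.\ constant (constants are polyharmonic), there exist points of continuity $z_0$ and directions $w$ where $\sigma$ is locally non-degenerate, and rescaled finite differences of the form $h^{-1}\bigl[\sigma(z_0 + h\,(\,\cdot\,)) - \sigma(z_0)\bigr]$ approximate complex affine maps uniformly on any prescribed compact set. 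Chaining such affine ``pass-through'' layers reproduces any shallow computation to arbitrary uniform precision.

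\emph{Sufficiency, polyharmonic case.} Assume $\sigma$ is almost polyharmonic but avoids (a) and (b). Modifying on a null set, I may take $\sigma \in C^\infty(\CC, \CC)$ with $\Delta^m \sigma \equiv 0$ for some minimal $m \in \N$. The Almansi-type decomposition
\[
  \sigma(z) \;=\; \sum_{k=0}^{m-1} |z|^{2k}\bigl( f_k(z) + \overline{g_k(z)} \bigr) ,
  \qquad f_k, g_k : \CC \to \CC \text{ entire,}
\]
is then available. The exclusion of (a) forces some $f_k$ or $g_k$ to be transcendental; the exclusion of (b) forces the decomposition to carry \emph{simultaneously} a nontrivial holomorphic and antiholomorphic contribution (or a genuine factor $|z|^{2k}$ with $k \geq 1$). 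The three-step strategy is: (i) exploit the shift- and dilation-invariance of the uniform closure $\overline{\NN_{\sigma,1}^1}$ inside the polyharmonic functions---together with the approximation technique behind \Cref{thm:ShallowUniversalApproximation}---to exhibit an explicit generator $\psi \in \overline{\NN_{\sigma,1}^1}$ which is neither (anti)holomorphic nor polynomial in $(z, \overline{z})$; (ii) feed $\psi(z)$ into the second hidden layer and, again by \Cref{thm:ShallowUniversalApproximation} applied in the transformed variable, approximate arbitrary continuous functions of $\bigl(z, \psi(z)\bigr)$; (iii) conclude density in $C(K, \CC)$ via a Stone--Weierstrass argument, using that $\psi$ and its conjugate, together with $z$ and $\overline{z}$, generate a point-separating, conjugation-closed subalgebra.

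\emph{Main obstacle.} The hardest step is (i): extracting a genuinely non-(anti)holomorphic and non-polynomial function from the shallow closure of a polyharmonic activation, and then showing that composing once more with $\sigma$ in the second hidden layer really does escape the polyharmonic class. This is precisely where the simultaneous exclusion of (a) and (b) becomes essential---removing only one of them leaves room for $\overline{\NN_{\sigma,1}^1}$ to collapse into a subring of $C(\CC, \CC)$ which is closed under composition (the polynomials in $(z, \overline{z})$, or the (anti)holomorphic functions), in which case the second hidden layer buys no extra expressivity and universality fails. Making this dichotomy rigorous will require a careful inspection of which polyharmonic functions actually lie in the shallow closure, and an argument that the presence of any ``genuine $|z|^{2k}$'' mixing propagates into the depth-2 class.
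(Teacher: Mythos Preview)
Your necessity argument is essentially the paper's: in case~(a) an induction on depth shows every network is a polynomial in $(z,\bar z)$ of bounded degree, and in case~(b) every network is holomorphic or antiholomorphic; either way the class is polyharmonic of fixed order and hence not dense. Fine.

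The sufficiency plan has a genuine gap in the polyharmonic subcase. Your step~(ii) appeals to \Cref{thm:ShallowUniversalApproximation} ``in the transformed variable'', but that theorem requires the activation \emph{not} to be almost polyharmonic---which is exactly the case you are in---so the invocation is circular. Step~(iii) then wants Stone--Weierstra\ss, but the depth-2 closure is not obviously an algebra (products of network outputs are not network outputs), so you would need an additional argument for closure under multiplication that you have not supplied. You identify the obstacle yourself (``Main obstacle''), and nothing in the proposal resolves it.

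The paper bypasses the polyharmonic/non-polyharmonic split entirely, via a trick you are missing. From ``not a.e.\ holomorphic and not a.e.\ antiholomorphic'' one gets (after convolution) $\del(\varphi_1\ast\sigma)\not\equiv 0$ and $\delbar(\varphi_2\ast\sigma)\not\equiv 0$, so the monomial-extraction lemma puts both $z$ and $\bar z$---hence $\Re z$---into the shallow closure $\overline{\NN_{\sigma,1}^1}$. From ``not a.e.\ a polynomial in $(z,\bar z)$'' one gets, for every $m$, that $\del^m(\psi_m\ast\sigma)\not\equiv 0$ \emph{or} $\delbar^m(\psi_m\ast\sigma)\not\equiv 0$, so either $z^m$ or $\bar z^m$ lies in the shallow closure. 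The key observation is that on real inputs $z^m=\bar z^m$, so \emph{whichever} monomial you obtained, shallow nets approximate any real polynomial evaluated at a real argument. Now compose: the first hidden layer approximates $z\mapsto\Re z$, the second approximates $x\mapsto p(x)$ for a polynomial $p$ close to $x\mapsto\max\{0,x\}$ on a large interval. This yields $\varrho_\CC(z)=\max\{0,\Re z\}$ in $\overline{\NN_{\sigma,2}^1}$, and a separate lemma (reduction to the real ReLU universal approximation theorem) upgrades $\varrho_\CC\in\overline{\NN_{\sigma,L}^1}$ to full universality in any input dimension. For $L>2$ one pads with approximate-identity layers using $\id_\CC\in\overline{\NN_{\sigma,1}^1}$, exactly as you do in your non-polyharmonic case. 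No Almansi decomposition, no Stone--Weierstra\ss\ on the network class, and no case split on polyharmonicity are needed.
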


\begin{rem*}
  Note that the necessary condition requires $\sigma$ to be continuous,
  whereas for the sufficient condition it is only assumed that $\sigma \in \CalM$.
  The additional continuity assumption is not simply a proof artifact.
  In fact, we will see in \Cref{exa:PathologicalUniversalSigma} that there does exist
  a discontinuous activation function $\sigma \in \CalM$ that coincides almost everywhere
  with a polynomial $p(z, \overline{z})$, but such that $\NN_{\sigma,L}^d$ is
  nevertheless universal for all $L \in \N_{\geq 2}$ (but not for $L = 1$).
  We leave it as future work to determine natural conditions on $\sigma$
  that are weaker than continuity, but under which a necessary condition
  as in the above theorem still holds.

  We would like to emphasize that any \emph{discontinuous} function $\sigma$
  satisfying (at least) one of the two conditions
  in \Cref{thm:DeepUniversalApproximationIntroduction} has to be quite pathological:
  Such a function is discontinuous, but coincides almost everywhere with a continuous function.
  In that case, it is usually much more natural to work with the ``continuous version''
  of $\sigma$ instead of with $\sigma$ itself.
  Moreover, such a function cannot be evaluated numerically in a reliable way
  and is therefore useless for practical applications.
\end{rem*}

\begin{remark}\label{rem:NecessityEvenL1}
  In both theorems above, it is only stated that if the conditions of the theorem fail,
  then the universal approximation property does not hold, meaning that some continuous
  function cannot be approximated by networks \emph{in the sense of locally uniform convergence}.
  In fact, the proof even shows that some \emph{compactly supported} continuous function
  cannot be approximated in $L_{\loc}^1 (\CC^d)$;
  see \Cref{thm:ShallowUniversalApproximationNecessary,thm:DeepNetworksNecessaryCriterion}.
\end{remark}

\subsection{Related work}%
\label{sub:RelatedWork}

\paragraph{The classical universal approximation theorem}

There exist many versions of the universal approximation theorem for \emph{real} networks.
One of the first versions of this theorem is due to Cybenko \cite{CybenkoUniversalApproximation},
who introduced the notion of \emph{discriminatory functions} to prove his result;
see \cite[Section~2]{CybenkoUniversalApproximation}.
By standard properties in functional analysis, it follows that the set of shallow networks
using a discriminatory activation function has the universal approximation property.
What is more challenging is to prove that certain activation functions are indeed discriminatory.
Cybenko verified this for continuous functions $\sigma : \R \to \R$ satisfying
$\lim_{x \to -\infty} \sigma(x) = 0$ and $\lim_{x \to \infty} \sigma(x) = 1$;
such functions are called \emph{sigmoidal}.
A slightly modified version of this result was shown in \cite{HornikStinchcombeWhiteUniversal},
where universality was established for possibly discontinuous, non-decreasing sigmoidal
activation functions.
Hornik \cite{HornikUniversalApproximation} proved universality for arbitrary non-constant,
bounded, continuous activation functions.
Finally, dropping the assumptions of continuity, monotonicity, boundedness, and sigmoidality,
the results in \cite{PinkusUniversalApproximation} established universality
for \emph{every} non-polynomial activation function, under some minimal continuity assumptions.
More precisely, \cite{PinkusUniversalApproximation} considers locally bounded functions
$\sigma : \R \to \R$ for which the closure of the set of discontinuities of $\sigma$ is a null-set;
the class $\CalM$ of complex-valued functions that we consider is the natural
generalization of this class to the complex domain.

\paragraph{Alternative network architectures}

The classical universal approximation theorem concerns fully connected
feedforward networks of a fixed depth and arbitrary width.
Recently, several alternative network architectures have been studied as well.
For instance, \cite{LuUniversalApproximationBoundedWidth} shows that ReLU networks
with bounded width $d+4$ but \emph{depth tending to infinity} are universal approximators.
Furthermore, \cite{ResNetUniversalApproximation} shows that so-called residual networks
with one neuron per hidden layer but arbitrarily many layers are universal as well.
The universality of \emph{convolutional networks} has been studied in
\cite{ZhouCNNUniversality,ZhouCNNUniversalityDownsampling,
PetersenVoigtlaenderConvolutionalNNEquivalence,YarotskyInvariantUniversal}.
In \cite{ZhouCNNUniversality,ZhouCNNUniversalityDownsampling}, it is shown that
the usual finite-dimensional convolutional networks using many layers and
zero-padded convolutions with short filters---either with \cite{ZhouCNNUniversalityDownsampling}
or without \cite{ZhouCNNUniversality} subsampling---are universal.
The case of convolutions with periodic boundary conditions and without subsampling
is treated in \cite{PetersenVoigtlaenderConvolutionalNNEquivalence}.
Finally, Yarotsky \cite{YarotskyInvariantUniversal} studied the universality
of convolutional networks in a continuous, infinite-dimensional setting.

\paragraph{Universal approximation for complex-valued neural networks}

Some of the first articles studying the approximation properties of complex-valued neural networks
are the papers
\cite{ArenaApproximationCapabilityOfComplexNeuralNetworks,ArenaMLPToApproximateComplexValuedFunctions}
by Arena et al.
In these articles, it was observed that complex-valued networks with holomorphic activation functions
cannot be universal.
Furthermore, \cite{ArenaMLPToApproximateComplexValuedFunctions} generalized the notion
of discriminatory functions introduced in \cite{CybenkoUniversalApproximation}
to the complex domain, showing that networks with discriminatory activation functions are universal.
Again, the actual difficulty lies in verifying that a given activation function is discriminatory.
This was shown in
\cite{ArenaApproximationCapabilityOfComplexNeuralNetworks,ArenaMLPToApproximateComplexValuedFunctions}
for the function $\sigma(z) = \frac{1}{1 + \exp(-\Re z)} + \frac{i}{1 + \exp(-\Im z)}$.
The same result appears in a somewhat more accessible form in
\cite[Theorem~2.6.3]{ArenaNNInMultidimensionalDomains}.
Another activation function that has been shown to be discriminatory is $\sigma(z) = \frac{z}{1 + |z|}$.
This fact can be found in \cite[Chapter~5]{HiroseCliffordBook}, where more general
Clifford-algebra-valued networks are considered; these specialize to complex-valued networks.

Two other widely cited works on the approximation properties of complex-valued neural networks
are \cite{KimApproximationWithFullyComplexNN} and \cite{HuangExtremeLearningFullyComplex}.
Both of these articles, however, are not entirely correct.
In \cite{KimApproximationWithFullyComplexNN}, several holomorphic activation functions
with singularities are considered.
Furthermore, even the entire functions $\sin$ and $\sinh$ are proposed as activation functions.
It is then claimed in the abstract and the summary of \cite{KimApproximationWithFullyComplexNN}
that the neural network sets associated to these activation functions are universal,
in the sense that they can uniformly approximate arbitrary continuous functions
on compact subsets of $\CC^n$.
However, as shown in \Cref{prop:IntroHolomorphicIsBad}, this is false in general.
More precisely, one can classify the activation functions
proposed in \cite{KimApproximationWithFullyComplexNN} as follows:
\begin{enumerate}
  \item Entire functions ($\sin$ and $\sinh$), or holomorphic functions with only isolated singularities
        ($\tan$ and $\tanh$).
        For these activation functions, \Cref{prop:IntroHolomorphicIsBad} shows that the associated
        neural networks are \emph{not} universal (even if one redefines them in an arbitrary
        way at the points of the singularities),
        contrary to what is claimed in \cite{KimApproximationWithFullyComplexNN}.

  \item Functions whose definition involves a branch cut,
        but which are \emph{locally bounded} on $\CC$.
        These are $\arcsin, \arccos$, and $\arcsinh$.
        If one considers (for instance) the principal branches of these functions,
        then they are locally bounded on $\CC$ and the closure of their set of discontinuities
        forms a null-set.
        Further, it is easy to see that \emph{none} of these functions coincides almost everywhere
        with a smooth function.
        Hence, \Cref{thm:ShallowUniversalApproximation} shows that shallow complex-valued
        neural networks with these activation functions are indeed universal.

  \item Function whose definition involves a branch cut,
        but which are \emph{not} locally bounded on $\CC$.
        The activation functions of this type considered in \cite{KimApproximationWithFullyComplexNN}
        are $\arctan$ and $\arctanh$.
        For these functions, the theory developed in the present article is not sufficient
        to decide whether the associated complex-valued neural networks are universal or not.
        However, precisely because these activation functions are not locally bounded,
        it is questionable whether they will be useful in practice.
\end{enumerate}
We emphasize that the proofs given in \cite{KimApproximationWithFullyComplexNN}
do \emph{not} correctly prove the universality for the activation functions mentioned in
Points 2 and 3 above, as the very same proofs also (incorrectly)
show universality for the activation functions mentioned in the first point.

Furthermore, there is a mismatch between what is stated in the abstract and summary
of \cite{KimApproximationWithFullyComplexNN} and what is actually stated in the theorems.
For instance, \cite[Theorems~1-3]{KimApproximationWithFullyComplexNN} concern the density
of functions of the form
$\sum_{k=1}^m \beta_k \prod_{\ell=1}^{s_k} \sigma(\theta_{k,\ell} + w_{k,\ell}^T z)$,
which are \emph{not} shallow neural networks with activation function $\sigma$,
due to the product appearing in the definition.
Additionally, \cite[Theorems~1 and 2]{KimApproximationWithFullyComplexNN} consider the approximation
of functions defined on ${I_n = [0,1]^n}$, which has empty interior as a subset of $\CC^n$.
For continuous functions on such domains, universal approximation by complex networks
with holomorphic activation functions \emph{is actually possible}, but this is not the
problem considered in the present article and also different from what is claimed
in the abstract and summary of \cite{KimApproximationWithFullyComplexNN}.

Finally, the article \cite{HuangExtremeLearningFullyComplex} considers so-called
\emph{extreme learning machines} with complex weights and activation functions.
These are essentially deep networks in which the weights of the ``lower layers''
are chosen randomly (according to a distribution with full support),
and only the weights on the output layer can be adjusted
in order to achieve the desired approximation.
In the real setting, it has been shown that such extreme learning machines
can almost surely approximate a given continuous function
if more and more random hidden neurons are added \cite{HuangRealELMUniversality};
the proof essentially uses the classical universal approximation in \cite{PinkusUniversalApproximation}
as a black box and shows that the statement remains true if one uses randomly chosen weights,
except on the last layer.
In \cite{HuangExtremeLearningFullyComplex}, a similar reasoning is used to argue that an
analogous result holds in the complex domain.
But for doing so, the authors cite the incorrect results from \cite{KimApproximationWithFullyComplexNN}.
Furthermore, due to the issues in \cite{KimApproximationWithFullyComplexNN} discussed above,
each ``neuron'' in the extreme learning machines considered in
\cite{HuangExtremeLearningFullyComplex} is of the form
$\prod_{\ell=1}^{s_k} \sigma(\theta_{k,\ell} + w_{k,\ell}^T z)$.
Due to the product, this disagrees with the usual definition of neural networks.
It should also be noted that no proof is given for \cite[Lemma~2.6]{HuangExtremeLearningFullyComplex},
which is an essential ingredient for the proofs in \cite{HuangExtremeLearningFullyComplex}.

In summary, to the best of our knowledge, the present article shows for the first time
that a wide class of activation functions leads to complex-valued neural networks
with the universal approximation property.
In fact, we provide a complete characterization of such activation functions,
subject to very mild local boundedness and continuity assumptions.

\subsection{Structure of this article}%
\label{sub:Structure}

After introducing some general notation, \Cref{sec:WirtingerCalculus} gives a brief
introduction to the \emph{Wirtinger calculus}, which will be essential for all that follows.
\Cref{sec:ProofSketch} paves the way for the formal and somewhat technical proofs
in \Cref{sec:Proofs}, by presenting the gist of the argument in a simplified form.
The presentation of the formal proofs in \Cref{sec:Proofs} is split into four subsections.
Considering the case of shallow networks, the first two subsections provide
a proof of \Cref{thm:ShallowUniversalApproximation}, split into separate proofs
for sufficiency and necessity.
Similarly, the last two subsections give a detailed proof of
\Cref{thm:DeepUniversalApproximationIntroduction}.
Several more technical results are deferred to the appendix.

\subsection{Notation}%
\label{sub:notation}

Throughout this article, we always consider $\CC$ as a vector space over $\R$,
meaning that differentiability of a function $f : U \subset \CC^d \to \CC$ will refer to
\emph{real differentiability}, not differentiability in the sense of holomorphic functions,
unless explicitly mentioned otherwise.
In particular, in the context of partial derivatives
$\frac{\partial}{\partial a}, \frac{\partial}{\partial x}$, etc.,
the variables $a,x,\dots$ should always be considered as real variables.
For a function $f : U \subset \R^d \to \CC$, we will use the notation $\partial_j f$
for the partial derivative of $f$ with respect to the $j$-th variable.
The same notation is used for $f : U \subset \CC \to \CC$,
where $f$ is identified with the real function $(x,y) \mapsto f(x + i y)$.
The sets of continuous, $n$-times continuously differentiable, or smooth complex-valued
functions on an open set $\emptyset \neq U \subset \CC^d$ are denoted by
$C(U; \CC)$, $C^n (U; \CC)$, and $C^\infty( U; \CC)$, respectively.
The set of smooth complex-valued functions with compact support in $U$ is denoted by
$C_c^\infty(U ; \CC)$.

On $\CC^d$, we use the Euclidean norm $|z| = \bigl(\sum_{j=1}^d |z_j|^2\bigr)^{1/2}$
as well as the maximum norm $\| z \|_{\infty} = \max_{j = 1,\dots,d} |z_j|$.
Open and closed Euclidean balls are denoted by
\[
  B_r (z)
  = \{
      w \in \CC^d
      \colon
      |w - z| < r
    \}
  \qquad \text{and} \qquad
  \overline{B_r} (z)
  = \{
      w \in \CC^d
      \colon
      |w - z|
      \leq r
    \} ,
\]
respectively; here, $r > 0$ and $z \in \CC^d$.
For a subset $D \subset \CC^d$, we will use the notation $\overline{D}$ to denote the closure of $D$.
A similar notation $\overbar{z}$ is used to denote the conjugate of a complex number $z \in \CC$;
the context will make clear which interpretation is intended.

\medskip{}

It is usually not possible to approximate all continuous functions
\emph{uniformly on all of $\R^d$ or $\CC^d$} using neural networks;
therefore, we will focus on \emph{locally uniform convergence}.
To be precise, on the set $\mathrm{Map}(\CC^d; \CC) = \{ f : \CC^d \to \CC \}$,
we consider the \emph{topology of compact convergence}
(also called \emph{the topology of uniform convergence on compact sets}),
as defined in \cite[§46]{MunkresTopology}.
This topology is generated by the sets
\[
  B_K (f,\eps)
  := \Big\{
       g : \CC^d \to \CC
       \,\,\colon\,\,
       \sup_{x \in K} |g(x) - f(x)| < \eps
     \Big\}
  ,
\]
where $f : \CC^d \to \CC$ and $\eps > 0$ are arbitrary, and where $K \subset \CC^d$
is any compact set.
In fact, the collection of all the sets $B_K (f,\eps)$ forms a \emph{basis}
for the topology of compact convergence; see \cite[§46]{MunkresTopology}.
Thus, given a set $\CalF \subset \{ f : \CC^d \to \CC \}$, the closure of $\CalF$
with respect to this topology is given by
\begin{equation}
  \overline{\CalF}
  := \big\{
       g : \CC^d \to \CC
       \,\,\colon\,\,
       \forall \, K \subset \CC^d \text{ compact and } \eps > 0 \quad
         \exists \, f \in \CalF : \quad
           \| g - f \|_{L^\infty (K)} \leq \eps
     \big\}
  ,
  \label{eq:ClosureDefinition}
\end{equation}
where we recall from \Cref{eq:SpecialLInftyNorm}
that $\| g \|_{L^\infty (K)} = \sup_{x \in K} |g(x)|$.

An important observation that we will use again and again is
that if $g \in \overline{\CalF}$, then there exists a \emph{sequence} $(f_n)_{n \in \N} \subset \CalF$
satisfying $f_n \to g$ locally uniformly.
Indeed, for each $n \in \N$ we can choose $f_n \in \CalF$ satisfying
$\| g - f_n \|_{L^\infty(\overline{B_n}(0))} \leq n^{-1}$,
and then clearly $f_n \to g$ locally uniformly.
The converse is also easily seen to hold (i.e., if $(f_n)_{n \in \N} \subset \CalF$
satisfies $f_n \to g$ locally uniformly, then $g \in \overline{\CalF}$).
Therefore, if one is working on a locally compact space like $\CC^d$,
this topology is also often called the \emph{topology of locally uniform convergence}.
In particular, this characterization of the closure $\overline{\CalF}$ implies
that the set $C(\CC^d;\CC)$ of continuous functions and the set
$\mathrm{Meas}_{\mathcal{B}}(\CC^d; \CC) = \{ f : \CC^d \to \CC \colon f \text{ is Borel-measurable} \}$
of Borel-measurable functions, as well as the set
$\mathrm{Meas}_{\mathcal{L}}(\CC^d; \CC) = \{ f : \CC^d \to \CC \colon f \text{ is Lebesgue-measurable} \}$
of Lebesgue-measurable functions are all closed in $\mathrm{Map}(\CC^d;\CC)$ with respect to this topology.
Unless mentioned otherwise, we will always understand the closure $\overline{\CalF}$
of a set $\CalF$ of functions to be defined as in \Cref{eq:ClosureDefinition}.
Also recall once more that we \emph{do not} identify functions that agree almost everywhere.

\smallskip{}

We frequently use the notation $\FirstN{n} := \{ k \in \Z \colon 1 \leq k \leq n \}$
for $n \in \N_0 = \{ 0,1,2,\dots \}$; in particular, $\FirstN{0} = \emptyset$.
To signify a \emph{disjoint} union, we write $\biguplus_{j \in I} B_j$.
The Lebesgue measure on $\R^d$ will be denoted by $\lambda_d$ or simply by $\lambda$.
Identifying $\CC^d \cong \R^{2 d}$, we also consider $\lambda_{2 d}$ as a measure on $\CC^d$.
Finally, given a function $\sigma : \CC \to \CC$, we extend it to a map on $\CC^d$
by acting componentwise: $\sigma(z_1, \dots, z_d) = \bigl(\sigma(z_1),\dots,\sigma(z_d)\bigr)$.


\section{The Wirtinger calculus and polyharmonic functions}%
\label{sec:WirtingerCalculus}

We will make heavy use of the so-called \emph{Wirtinger calculus};
\cite[§1]{KaupHolomorphicFunctionsSeveralVariables}.
Precisely, given an open set $\emptyset \neq U \subset \CC$ and a $C^1$ function $f : U \to \CC$,
we define the \emph{Wirtinger derivatives}
\[
  \del f := \frac{1}{2} \big( \partial_1 f - i \, \partial_2 f \big)
  \qquad \text{and} \qquad
  \delbar f := \frac{1}{2} \big( \partial_1 f + i \, \partial_2 f \big) .
\]
In the case of functions $f = f(z,w)$ of several complex variables, we will use the notation
$\del_w$ and $\delbar_w$ to specify the variable with respect
to which the Wirtinger derivatives are taken.

\begin{rem*}
  In the usual literature, the notation $\frac{\partial}{\partial z}$
  and $\frac{\partial}{\partial \overline{z}}$ is used instead of $\del$
  and $\delbar$.
  We refrain from using this notation to avoid ambiguities with the notation
  for partial derivatives in the sense of real differentiability.
\end{rem*}

We will use the following properties of the Wirtinger derivatives, given e.g.~in
\cite[E.~1a]{KaupHolomorphicFunctionsSeveralVariables}:
\begin{itemize}
  \item $\del$ and $\delbar$ are $\CC$-linear;

  \item a function $f \in C^1(U; \CC)$ is holomorphic if and only if $\delbar f \equiv 0$,
        and in this case the complex derivative $f'$ of $f$ coincides with $\del f$;

  \item the Wirtinger derivatives are compatible with conjugation in the sense that
        \begin{equation}
          \overline{\delbar f} = \del \overline{f}
          \quad \text{and} \quad
          \overline{\del f} = \delbar \, \overline{f} ;
          \label{eq:WirtingerConjugation}
        \end{equation}

  \item the product rule holds, meaning
        \[
          \del (f \cdot g) = g \cdot \del f + f \cdot \del g
          \quad \text{and} \quad
          \delbar (f \cdot g)
          = g \cdot \delbar f + f \cdot \delbar g
          \quad \text{for} \quad f,g \in C^1(U; \CC);
        \]

  \item we have the following variant of the chain rule
        for $g \in C^1(U; V)$ and $f \in C^1(V; \CC)$
        with open sets $\emptyset \neq U, V \subset \CC$:
        \begin{equation}
          \del (f \circ g)
          = \bigl[(\del f) \circ g\bigr] \cdot \del g
            + \bigl[(\delbar f) \circ g\bigr] \cdot \del \overbar{g}
          \quad \text{and} \quad
          \delbar (f \circ g)
          = \bigl[(\del f) \circ g\bigr] \cdot \delbar g
            + \bigl[(\delbar f) \circ g\bigr] \cdot \delbar \, \overbar{g} .
          \label{eq:ChainRule}
        \end{equation}
\end{itemize}

Because of $\del \overbar{z}^m = \overline{\delbar z^m} = 0$,
the product rule implies for $m \in \N_0$ and $f \in C^1(U; \CC)$ that
\(
  \del\bigl(\overbar{z}^m \cdot f(z)\bigr)
  = f(z) \cdot \del \overbar{z}^m
    + \overbar{z}^m \cdot \del f(z)
  = \overbar{z}^m \cdot \del f(z)
  .
\)
By induction (and separately considering the case $\ell = 0$), this easily shows
\begin{equation}
  \del^\ell \bigl(\overbar{z}^m \cdot f(z)\bigr)
  = \overbar{z}^m \cdot \del^\ell f(z)
  \quad \text{for } \ell, m \in \N_0 \text{ and } f \in C^\ell(U ; \CC).
  \label{eq:SpecialProductRule}
\end{equation}

The Wirtinger calculus is closely related to the class of
\emph{polyharmonic functions}; see for instance \mbox{\cite[Pages 12--13]{BalkPolyanalyticFunctions}}.
Precisely, given an open set $\emptyset \neq U \subset \CC$,
a (real-valued) $C^{2 m}$-function $f : U \to \R$ is called \emph{polyharmonic of order $m$} if
$\Delta^m f \equiv 0$, where $\Delta = \frac{\partial^2}{\partial x^2} + \frac{\partial^2}{\partial y^2}$
is the usual Laplace operator on $\CC \cong \R^2$.
We will say that a complex-valued function $f : U \to \CC$ is (complex) polyharmonic of order $m$
if both $\Re f$ and $\Im f$ are polyharmonic of order $m$.
The relation between polyharmonic functions and the Wirtinger calculus is due to the identity
\begin{equation}
  \Delta f = 4 \, \del \delbar f
  \quad \text{for} \quad
  f \in C^2(U; \CC) ,
  \label{eq:WirtingerLaplaceRepresentation}
\end{equation}
given in \cite[Equation~(1.7)]{BalkPolyanalyticFunctions}.


\section{The main proof ideas}%
\label{sec:ProofSketch}

In this section, we sketch the main proof ideas, to prepare for the
fully rigorous, but somewhat technical proofs in \Cref{sec:Proofs}.
We remark that many ideas are inspired by the techniques in \cite{PinkusUniversalApproximation};
we modify these ideas to apply to the complex domain.

\subsection{The case of shallow networks}%
\label{sub:ShallowProofSketch}

\paragraph{Proving sufficiency for smooth $\sigma$:}

Let us assume that $\sigma$ is smooth but \emph{not} polyharmonic.
Thanks to \Cref{eq:WirtingerLaplaceRepresentation}, this implies
$\del^m \delbar^\ell \sigma \not\equiv 0$ for all $m,\ell \in \N_0$.
A direct calculation using the properties of the Wirtinger calculus shows that
\begin{equation}
  \del_w^m \delbar_w^\ell \big|_{w = 0}
  \bigl[ \sigma(w z + \theta) \bigr]
  = z^m \, \overbar{z}^\ell \cdot (\del^m \delbar^\ell \sigma) (\theta) .
  \label{eq:IntroductionMonomialExtraction}
\end{equation}
By choosing $\theta = \theta_{m,\ell}$ such that $(\del^m \delbar^\ell \sigma) (\theta) \neq 0$,
one can thus ``extract'' the monomial $z^m \, \overbar{z}^\ell$ from certain
Wirtinger derivatives of $\sigma$.

Note that $\Psi_w : z \mapsto \sigma(w z + \theta)$ can be computed by a single neuron.
Furthermore, since the set $\NN_{\sigma}^1$ is a vector space and closed under translations
and dilations, one can compute difference quotients of $w \mapsto \Psi_w (z)$ using shallow networks.
Based on this observation, one can show that the derivative on the left-hand side of
\Cref{eq:IntroductionMonomialExtraction} can be approximated by shallow networks;
see the proofs of \Cref{lem:MonomialExtraction} and \Cref{prop:DerivativesInTheSpace}
for the details.

By the complex version of the \emph{Stone-Weierstraß theorem}, any continuous function
can be uniformly approximated%
\footnote{Note that this is decidedly false if one only considers polynomials in $z$,
since these are always holomorphic, which is preserved under locally uniform convergence.}
by polynomials $p(z,\overbar{z})$.
This proves universality for input dimension $d = 1$.
Generalizing this to higher input dimension is mainly technical;
see \Cref{lem:FromOneDimensionToHigherDimensions}.

\paragraph{Generalizing to non-smooth $\sigma$:}

The idea is to use convolutions to approximate $\sigma \in \CalM$ by smooth functions.
More precisely, using that $\sigma \in \CalM$, one can show for each $\varphi \in C_c^\infty(\CC)$
that the convolution $\varphi \ast \sigma$ can be approximated arbitrarily well
(locally uniformly) by sums of translations of $\sigma$, and thus by shallow
neural networks that use $\sigma$ as the activation function;
see (the proof of) \Cref{lem:ConvolutionApproximation} for the details.

As a consequence, it suffices to show that for a suitable choice of $\varphi$,
one can approximate every continuous function $f$ arbitrarily well
using shallow networks that use $\varphi \ast \sigma$ as the activation function.
Actually, we show something formally weaker, namely that every continuous function $f$
can be approximated arbitrarily well by networks where each neuron has an
activation function $\varphi \ast \sigma$, where $\varphi$ can be chosen independently
for each neuron.
By the same argument as above, this suffices to prove the actual claim.

To prove the universality of networks with activation function $\varphi \ast \sigma$
varying for each neuron, we show that since $\sigma$ is not almost polyharmonic,
there is for each $m \in \N$ a function $\varphi_m \in C_c^\infty (\CC)$ satisfying
$\Delta^m (\varphi_m \ast \sigma) \not\equiv 0$,
and thus also $\Delta^k (\varphi_m \ast \sigma) \not\equiv 0$ for $k \leq m$.
The proof of this is based on noting that $\varphi_\eps \ast \sigma \to \sigma$ in $L_{\loc}^1$,
for a suitable choice of $\varphi_\eps \in C_c^\infty (\CC)$, and on noting that
the property of being (almost) polyharmonic of order $m$ is preserved under this type
of convergence.
This last property is essentially a consequence of \emph{Weyl's lemma} \cite{StroockWeylLemma};
see \Cref{lem:WeaklyPolyharmonicIsPolyharmonic} for the details.
As in \Cref{eq:IntroductionMonomialExtraction}, it follows from
$\Delta^k (\varphi_m \ast \sigma) \not\equiv 0$ for $k \leq m$
that shallow networks using the activation function $\varphi_m \ast \sigma$
can approximate the monomials $z^k \overbar{z}^\ell$ with $0 \leq k,\ell \leq m$ arbitrarily well.
Thus, taking together all the different activation functions $\varphi_m \ast \sigma$ for $m \in \N$,
one recovers all possible monomials and can complete the proof as before.

We remark that this proof device of considering several different activation functions
at once is responsible for many of the technical complications in the proof.

\paragraph{Proving necessity:}

The main observation here is that the operations involved in constructing
a neural network from the activation function $\sigma$ preserve the polyharmonicity.

Precisely, let us assume that $\sigma \in C^\infty(\CC; \CC)$ is polyharmonic,
meaning $\Delta^m \sigma \equiv 0$ for some $m \in \N$;
the generalization to ``almost polyharmonic'' functions is then mainly technical%
\footnote{Note, however, that there is still something to do here.
For instance, for the case of \emph{deep} networks, a similar statement is false,
as \Cref{exa:PathologicalUniversalSigma} shows.}.
Furthermore, it is enough to consider the case of input dimension $d = 1$.
Indeed, if ${f_0 : \CC \to \CC}$ is continuous and cannot be approximated
by networks in $\NN_\sigma^1$, it is easy to see that
${f : \CC^d \to \CC, (z_1,\dots,z_d) \mapsto f(z_1)}$ cannot be approximated
by networks in $\NN_{\sigma}^d$.

Since the affine-linear maps $z \mapsto b_j + w_j z$ are holomorphic,
they satisfy $\delbar (b_j + w_j \, z) = 0$.
Thanks to the chain rule for the Wirtinger derivatives and by \Cref{eq:WirtingerLaplaceRepresentation},
this implies
\begin{align*}
  \Delta^m \big[ \sigma(b_j + w_j \, z) \big]
  & = 4^m \, \del^m \delbar^m \bigl[\sigma(b_j + w_j \, z)\bigr] \\
  & = 4^m \, |w_j|^{2m} \cdot \bigl(\del^m \delbar^m \sigma\bigr) (b_j + w_j z) \\
  & = |w_j|^{2m} \cdot (\Delta^m \sigma) (b_j + w_j z)
    = 0.
\end{align*}
By linearity, this implies $\Delta^m \Phi \equiv 0$ for all $\Phi \in \NN_{\sigma}^1$.
But similar to the space of holomorphic functions, the space of polyharmonic
functions of a fixed order is a proper subspace of the set of continuous functions
and is closed with respect to locally uniform convergence.
This is essentially a consequence of Weyl's lemma \cite{StroockWeylLemma};
see \Cref{lem:WeaklyPolyharmonicIsPolyharmonic} for the details.

Overall, this shows that $\NN_\sigma^1$ does not have the universal approximation
property if $\sigma$ is (almost) polyharmonic.

\subsection{The case of deep networks}%
\label{sub:DeepProofSketch}

\paragraph{Sufficiency for smooth $\sigma$:}

By assumption, $\sigma$ is neither holomorphic nor antiholomorphic
(i.e., $\overline{\sigma}$ is not holomorphic).
Therefore, $\del \sigma \not\equiv 0$ and $\delbar \sigma \not\equiv 0$.
As in \Cref{eq:IntroductionMonomialExtraction}, this implies that shallow networks with
activation function $\sigma$ can approximate the monomials $z$ and $\overbar{z}$,
and hence also the function $z \mapsto \Re z$ arbitrarily well.

Furthermore, since $\sigma$ is not of the form $\sigma(z) = p(z, \overbar{z})$,
one can show for each $m \in \N_0$ that either $\del^m \sigma \not\equiv 0$
or $\delbar^m \sigma \not\equiv 0$.
Again, this implies as in \Cref{eq:IntroductionMonomialExtraction} that shallow networks
with activation function $\sigma$ can either approximate the monomial $z^m$ or the monomial
$\overbar{z}^m$ arbitrarily well.
Note that for real inputs $z \in \R$, it does not matter which of the two cases occurs.

Since we consider deep networks with at least two layers, it follows by composition
that suitable networks $\Phi \in \NN_{\sigma,L}^1$ can approximate arbitrarily well
the function $z \mapsto q(\Re z)$, where $q \in \CC [X]$ is an arbitrary polynomial.
It is then mainly technical to show that this implies that suitable networks
$\Phi \in \NN_{\sigma,L}^1$ can approximate \emph{every} continuous function $f : \CC \to \CC$;
see Step~4 in the proof of \Cref{thm:DeepNetworksSufficientCriterion}.
This establishes universality in the case of input dimension $d = 1$.
The generalization to higher input dimensions is again mainly technical;
see \Cref{lem:FromOneDimensionToHigherDimensions}.

\paragraph{Generalizing to non-smooth $\sigma$:}

Since this is similar to the case of shallow networks, we omit the details in this proof sketch.

\paragraph{Proving necessity:}

As for the case of shallow networks, it is enough to consider the case of
input dimension $d = 1$.
If $\sigma$ is holomorphic, it is straightforward to see that every function
$\Phi \in \NN_{\sigma,L}^1$ is holomorphic as well, and thus satisfies $\delbar \Phi \equiv 0$
and hence $\Delta \Phi \equiv 0$; see \Cref{eq:WirtingerLaplaceRepresentation}.
Likewise, if $\sigma$ is antiholomorphic (meaning that $\overline{\sigma}$ is holomorphic),
then each $\Phi \in \NN_{\sigma,L}^1$ is holomorphic or antiholomorphic,
depending on whether $L$ is even or odd.
Again, this implies $\Delta \Phi \equiv 0$.
Finally, if $\sigma(z) = p(z,\overbar{z})$ is a polynomial of $z$ and $\overbar{z}$,
one can show that each $\Phi \in \NN_{\sigma,L}^1$ satisfies $\Phi(z) = q_\Phi (z,\overbar{z})$
for a two-variable polynomial $q_\Phi \in \CC[X,Y]$ of degree $\deg q_\Phi \leq K = K(p,L)$.
Based on this, one can show $\Delta^m \Phi \equiv 0$, for a suitable $m = m(p,L)$.
Thus, in any case, $\NN_{\sigma,L}^1$ only consists of polyharmonic functions of a fixed degree.
This implies as for shallow networks that the network class $\NN_{\sigma,L}^1$ is not universal.


\section{Proofs}%
\label{sec:Proofs}

In this section we formally prove
\Cref{thm:ShallowUniversalApproximation,thm:DeepUniversalApproximationIntroduction},
following the ideas outlined in the preceding section.
In \Cref{sub:SufficiencyShallow,sub:NecessityShallow}, we consider the case of shallow networks
and separately prove the sufficiency and necessity in \Cref{thm:ShallowUniversalApproximation}.
\Cref{sub:SufficiencyDeep,sub:NecessityDeep} follow the same pattern, but for deep networks.

\subsection{Sufficiency for shallow networks}%
\label{sub:SufficiencyShallow}

In the following, instead of directly working with sets of neural networks,
we will consider the slightly more general setting of (complex) subspaces
$V \subset \{ f : \CC \to \CC \}$ that are \emph{closed under translations and dilations}.
By this we mean that if $\varphi \in V$, then also $\varphi_{a,b} \in V$ for arbitrary
$a,b \in \CC$, where
\begin{equation}
  \varphi_{a,b} : \quad
  \CC \to \CC, \quad
  z \mapsto \varphi(a \, z + b) .
  \label{eq:TranslationDilation}
\end{equation}
The following proposition gives a sufficient condition for such a space $V$
to be dense in the set of all continuous functions.
Here, we only consider subspaces of the set of smooth functions;
the more general case $V \subset \CalM$ will be considered later.

\begin{proposition}\label{prop:GeneralDensityResultWithSmoothness}
  Let $V \subset C^\infty (\CC; \CC)$ be a (complex) vector space that is
  closed under translations and dilations.
  Furthermore, assume that for arbitrary $m,\ell \in \N_0$ there is $\varphi_{m,\ell} \in V$
  with $\del^m \delbar^\ell \varphi_{m,\ell} \not \equiv 0$.
  Then $\overline{V} = C(\CC; \CC)$.
  Here, $\overline{V}$ denotes the closure of $V$
  with respect to the topology of \emph{locally} uniform convergence,
  as defined in \Cref{eq:ClosureDefinition}.
\end{proposition}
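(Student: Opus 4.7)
The plan is to show that the closure $\overline{V}$ contains every monomial $z \mapsto z^m \overline{z}^\ell$ for $m, \ell \in \N_0$, and then invoke the complex Stone--Weierstra{\ss} theorem together with the fact that $\overline{V}$ is itself closed under locally uniform convergence. The key identity that converts the hypothesis $\del^m \delbar^\ell \varphi_{m,\ell} \not\equiv 0$ into the presence of a monomial is \Cref{eq:IntroductionMonomialExtraction}: fix $m, \ell \in \N_0$, choose $\theta \in \CC$ with $c := (\del^m \delbar^\ell \varphi_{m,\ell})(\theta) \neq 0$, and consider the jointly smooth map $\Psi(w, z) := \varphi_{m,\ell}(wz + \theta)$. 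Applying the Wirtinger chain rule \Cref{eq:ChainRule} to $g(w) = wz + \theta$ (so that $\del_w g = z$, $\delbar_w g = 0$, $\del_w \overline{g} = 0$, $\delbar_w \overline{g} = \overline{z}$), a short induction gives
\begin{equation*}
  \del_w^m \, \delbar_w^\ell \big|_{w = 0} \Psi(w, z) = c \cdot z^m \, \overline{z}^\ell .
\end{equation*}

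Next, I would argue that this $w$-derivative can be realized as a locally uniform limit in $z$ of elements of $V$. Each of the operators $\del_w, \delbar_w$ is a fixed $\CC$-linear combination of the real partials $\partial_1, \partial_2$ in $w$, and these are in turn limits of one-sided difference quotients. Composing $m + \ell$ such operators expresses $\del_w^m \delbar_w^\ell |_{w = 0} \Psi(\mybullet, z)$ as the limit, as step sizes $h \to 0$, of a finite $\CC$-linear combination $\sum_j \alpha_j(h) \, \varphi_{m,\ell}(w_j(h) \, z + \theta)$. By the closure of $V$ under translations and dilations, each summand $z \mapsto \varphi_{m,\ell}(w_j(h) z + \theta)$ belongs to $V$, and since $V$ is a $\CC$-vector space, so does the entire combination. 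Because $(w, z) \mapsto \varphi_{m,\ell}(wz + \theta)$ is jointly smooth, its mixed $w$-derivatives up to order $m + \ell + 1$ are bounded uniformly on $\overline{B_\delta}(0) \times K$ for any compact $K \subset \CC$ and any $\delta > 0$; a standard Taylor-remainder estimate then yields uniform convergence of the difference quotients on $K$. Hence $z \mapsto c \, z^m \overline{z}^\ell$, and by linearity $z \mapsto z^m \overline{z}^\ell$, belongs to $\overline{V}$ for every $m, \ell \in \N_0$.

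To conclude, I would invoke the complex Stone--Weierstra{\ss} theorem: the algebra $\CC[z, \overline{z}]$ contains the constants, separates points of $\CC$, and is stable under complex conjugation, hence is uniformly dense in $C(K; \CC)$ for every compact $K \subset \CC$. Since $\overline{V}$ is a $\CC$-vector space containing every monomial $z^m \overline{z}^\ell$, it contains all polynomials in $z$ and $\overline{z}$. A straightforward $\eps/2$-argument shows that $\overline{V}$ is itself closed under locally uniform convergence, so for any continuous $f : \CC \to \CC$, any compact $K$, and any $\eps > 0$, one chooses a polynomial $p \in \overline{V}$ with $\|f - p\|_{L^\infty(K)} \leq \eps / 2$ and then some $g \in V$ with $\|p - g\|_{L^\infty(K)} \leq \eps / 2$, giving $\|f - g\|_{L^\infty(K)} \leq \eps$; thus $f \in \overline{V}$. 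The genuinely delicate point is the middle step: carefully organizing the iterated difference quotients in $w$ so that the error terms, controlled by higher $w$-derivatives of $\Psi$, remain bounded uniformly over $z$ in any prescribed compact set. This is precisely where the hypothesis $V \subset C^\infty(\CC; \CC)$ enters the argument.
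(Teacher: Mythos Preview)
Your proposal is correct and follows essentially the same approach as the paper: establish the Wirtinger identity $\del_w^m \delbar_w^\ell|_{w=0}[\varphi(wz+\theta)] = z^m \overline{z}^\ell \cdot (\del^m \delbar^\ell \varphi)(\theta)$ via the chain rule, approximate the $w$-derivatives by difference quotients (which lie in $V$ by translation/dilation invariance and linearity) with locally uniform convergence in $z$, and conclude via the complex Stone--Weierstra{\ss} theorem and $\overline{\overline{V}} = \overline{V}$. The paper packages the first two steps into a separate lemma (\Cref{lem:MonomialExtraction}) and defers the difference-quotient convergence to an appendix result (\Cref{prop:DerivativesInTheSpace}), but the substance is the same.
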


The proof of \Cref{prop:GeneralDensityResultWithSmoothness} (given after the following lemma)
is heavily inspired by the arguments in \cite[Step~3 in Section~6]{PinkusUniversalApproximation}.
The main insight of that argument---adapted to the complex setting---%
is contained in the following lemma.
It essentially shows that $\overline{V}$ contains sufficiently many monomials.

\begin{lemma}\label{lem:MonomialExtraction}
  Let $V \subset C^\infty(\CC; \CC)$ be a (complex) vector space that is
  closed under translations and dilations.
  Given arbitrary $\varphi \in V$, $\theta \in \CC$, and $m,\ell \in \N_0$, the map
  \[
    \CC \to \CC, \quad
    z \mapsto z^m \bar{z}^\ell \cdot (\del^m \delbar^\ell \varphi) (\theta)
  \]
  belongs to $\overline{V}$.
  Here, $\overline{V}$ denotes the closure of $V$
  with respect to the topology of \emph{locally} uniform convergence,
  as defined in \Cref{eq:ClosureDefinition}.
\end{lemma}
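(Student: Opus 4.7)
The plan is to realize the target function as a specific iterated Wirtinger derivative in an auxiliary parameter, and then approximate that derivative by finite-difference quotients that manifestly stay in $V$.

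Concretely, fix $\varphi \in V$, $\theta \in \CC$, and $m,\ell \in \N_0$, and set $g_w(z) := \varphi(w z + \theta)$ for $w \in \CC$. The key observation is identity (\ref{eq:IntroductionMonomialExtraction}) (derived from the chain rule (\ref{eq:ChainRule}) applied to the holomorphic map $z \mapsto w z + \theta$), which gives
\[
  \del_w^m \delbar_w^\ell \big|_{w=0} g_w(z)
  = z^m \bar z^\ell \cdot (\del^m \delbar^\ell \varphi)(\theta) ,
\]
so it suffices to show that the function on the left-hand side (as a function of $z$) lies in $\overline{V}$. Note that $g_w = \varphi_{w,\theta}$, so $g_w \in V$ for every $w \in \CC$ by the closure of $V$ under translations and dilations.

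Next, I would write $w = u + i v$ with $u,v \in \R$ and expand
\[
  \del_w^m \delbar_w^\ell
  = \tfrac{1}{2^{m+\ell}} (\partial_u - i \partial_v)^m (\partial_u + i \partial_v)^\ell
\]
as a fixed complex linear combination of operators $\partial_u^a \partial_v^b$ with $a + b = m + \ell$. Each single partial derivative at $w = 0$ can be approximated by a symmetric difference quotient, e.g.\
\[
  \partial_u g_w(z) \big|_{w=0}
  = \lim_{h \to 0} \frac{g_h(z) - g_{-h}(z)}{2 h} ,
\]
and analogously for $\partial_v$ using $w = \pm i h$. Iterating these symmetric differences $a + b$ times produces, for each small $h > 0$, a finite complex linear combination $\sum_{j} \alpha_j(h) \, g_{w_j(h)}(z)$ with finitely many $w_j(h) \in \CC$, which lies in $V$ since $V$ is a complex vector space containing every $g_{w_j(h)}$. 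Combining over all pairs $(a,b)$ with the fixed coefficients from the expansion of $\del_w^m \delbar_w^\ell$ still yields an element of $V$ for each $h$.

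Finally, I would establish that these iterated symmetric-difference approximations converge \emph{locally uniformly in $z$} to $\del_w^m \delbar_w^\ell g_w(z)|_{w=0}$. Since $\varphi \in C^\infty(\CC;\CC)$, the map $(w,z) \mapsto g_w(z)$ is smooth on $\CC \times \CC$, so all mixed partial derivatives in $(u,v)$ up to order $m + \ell + 1$ are continuous, hence uniformly continuous on any compact set $K \times \{|w| \leq 1\}$. A standard Taylor-with-remainder estimate then yields uniform convergence on $K$ of each iterated symmetric difference to the corresponding partial derivative, and thus of the full linear combination to $\del_w^m \delbar_w^\ell g_w(z)|_{w=0}$. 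This places the limit in $\overline{V}$, as desired.

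The main obstacle I anticipate is the bookkeeping for the iterated symmetric differences: making sure that taking $a + b$ successive symmetric differences of a smooth function of two real variables converges to the corresponding mixed partial derivative \emph{uniformly on compact sets in the remaining variable $z$}, rather than merely pointwise. This is not deep, but it does require invoking the smoothness of $\varphi$ through a quantitative Taylor estimate that controls the error uniformly in $z \in K$; once that is in place, the rest of the argument is purely algebraic.
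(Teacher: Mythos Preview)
Your proposal is correct and follows essentially the same route as the paper: identify the target as $\del_w^m\delbar_w^\ell|_{w=0}\,\varphi(wz+\theta)$ via the Wirtinger chain rule, rewrite this as a fixed linear combination of real partial derivatives $\partial_u^a\partial_v^b$, and recover each of those as a locally uniform limit of difference quotients lying in $V$. The only cosmetic difference is that the paper packages the difference-quotient argument as an induction on the order of the derivative (one partial at a time, in \Cref{prop:DerivativesInTheSpace}), whereas you propose a one-shot iterated symmetric difference with a Taylor remainder bound; both handle the ``uniformly in $z$ on compacts'' issue in the same way, via smoothness of $(w,z)\mapsto\varphi(wz+\theta)$.
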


\begin{proof}
  \textbf{Step 1:} We show for $\varphi \in C^\infty(\CC; \CC)$, $m,\ell \in \N_0$
  and $z, \theta \in \CC$ that
  \begin{equation}
    \del_w^m \, \delbar_w^\ell \big|_{w=0} \big[ \varphi (w z + \theta) \big]
    = z^m \overbar{z}^\ell \cdot (\del^m \delbar^\ell \varphi) (\theta) .
    \label{eq:MonomialReproduction}
  \end{equation}
  To see this, note that $w \mapsto w \, z + \theta$ is holomorphic, so that
  $\del_w \bigl[\, \overline{w z + \theta} \,\bigr] = \overline{\delbar_w [w z + \theta]} = 0$
  and $\del_w (w \, z + \theta) = z$, as well as
  \(
    \delbar_w \big[ \, \overline{w z + \theta} \, \big]
    = \overline{\del_w [w z + \theta]}
    = \overbar{z} ;
  \)
  see \Cref{eq:WirtingerConjugation}.
  Therefore, the chain rule for the Wirtinger derivatives (see \Cref{eq:ChainRule}) shows that
  \[
    \del_w \big[ \varphi(w z + \theta) \big]
    = (\del \varphi) (w z + \theta) \cdot \del_w [w z + \theta]
      + (\delbar \varphi) (w z + \theta)
        \cdot \del_w \bigl[\, \overline{w z + \theta} \,\bigr]
    = z \cdot (\del \varphi) (w z + \theta) .
  \]
  Based on this, a straightforward induction shows that
  $\del_w^{m} [\varphi(w z + \theta)] = z^m \cdot (\del^m \varphi)(w z + \theta)$;
  indeed, for the induction step one uses that
  \[
    \del_w^{m+1} \bigl[\varphi(w z + \theta)\bigr]
    = \del_w \big[ z^m \cdot (\del^m \varphi) (w z + \theta) \big]
    = z^m \cdot \del_w [(\del^m \varphi) (w z + \theta)]
    = z^{m} \cdot z \cdot (\del^{m+1} \varphi) (w z + \theta) .
  \]
  Using exactly the same arguments, one sees that
  \(
    \delbar_w^\ell \big[ \varphi (w z + \theta) \big]
    = \overbar{z}^\ell \cdot (\delbar^\ell \varphi) (w z + \theta) .
  \)
  In combination, this shows
  \[
    \del_w^m \, \delbar_w^\ell \big[ \varphi (w z + \theta) \big]
    = \del_w^m \big[ \overbar{z}^\ell \cdot (\delbar^\ell \varphi) (w z + \theta) \big]
    = \overbar{z}^\ell \, z^m \cdot (\del^m \, \delbar^\ell \varphi) (w z + \theta) .
  \]
  Evaluating this at $w = 0$, we obtain \Cref{eq:MonomialReproduction}.

  \medskip{}

  \noindent
  \textbf{Step 2:} \emph{(Completing the proof):}
  Let $\varphi \in V \subset C^\infty (\CC; \CC)$ and $\theta \in \CC$ be arbitrary.
  Since $V$ is closed under dilations and translations, and since derivatives can be approximated
  by difference quotients, it follows for arbitrary $k,n \in \N_0$ that
  $\psi_{k,n} \in \overline{V}$, where
  \[
    \psi_{k,n} : \quad
    \CC \to \CC, \quad
    z \mapsto \frac{\partial^k \partial^n}{\partial a^k \partial b^n} \Big|_{a = b = 0} \,\,
              \varphi \big( (a + i b) z + \theta \big) .
  \]
  A fully rigorous proof showing that $\psi_{k,n} \in \overline{V}$
  is given in \Cref{prop:DerivativesInTheSpace}.

  By definition of the Wirtinger derivatives and since all partial derivatives
  commute for smooth functions, we see that if we write $w = a + i b$, then
  the operator $\del_w^m \delbar_w^\ell$ is a finite linear combination (with complex coefficients)
  of the operators $\frac{\partial^k \, \partial^n}{\partial a^k \, \partial b^n}$
  for $k,n \in \N_0$.
  Therefore, we see
  \[
    \Bigl(z \mapsto \del_w^m \delbar_w^\ell \big|_{w=0} \big[ \varphi(w z + \theta) \big] \Bigr)
    \in \linspan_{\CC} \bigl\{ \psi_{k,n} \colon k,n \in \N_0 \bigr\}
    \subset \overline{V} .
  \]
  In combination with \Cref{eq:MonomialReproduction}, this yields the claim of the lemma.
\end{proof}

Based on the lemma above, we can easily prove \Cref{prop:GeneralDensityResultWithSmoothness}.

\begin{proof}[Proof of \Cref{prop:GeneralDensityResultWithSmoothness}]
  By assumption, we can find for arbitrary $m,\ell \in \N_0$
  a point $\theta_{m,\ell} \in \CC$ and a function $\varphi_{m,\ell} \in V$ such that
  $\rho_{m,\ell} := (\del^m \delbar^\ell \varphi_{m,\ell})(\theta_{m,\ell}) \neq 0$.
  Thanks to \Cref{lem:MonomialExtraction}, we thus see
  $\big( z \mapsto \rho_{m,\ell} \cdot z^m \overbar{z}^\ell \big) \in \overline{V}$.
  This implies
  \(
    \overline{V}
    \supset \linspan_{\CC}
            \bigl\{
              (z \mapsto z^m \overline{z}^\ell)
              \colon
              m,\ell \in \N_0
            \bigr\} ,
  \)
  where the right-hand side is an algebra of continuous functions that separates the points of
  $\CC$, contains the constant functions, and is closed under conjugation.
  Thus, the complex version of the Stone-Weierstraß theorem
  (see for instance \cite[Theorem~4.51]{FollandRA}) implies that
  $C(\CC; \CC) \subset \overline{\overline{V}} = \overline{V}$, where we recall that the closure
  is taken with respect to the topology of \emph{locally} uniform convergence.
\end{proof}

The next lemma will be used to generalize \Cref{prop:GeneralDensityResultWithSmoothness}
from subspaces $V \subset C^\infty$ to subspaces $V \subset \CalM$.
The proof of the lemma is closely based on that of
\cite[Step~4 in Section~6]{PinkusUniversalApproximation}.

\begin{lemma}\label{lem:ConvolutionApproximation}
  With the space $\CalM$ as in \Cref{sub:IntroductionResults}, let $\sigma \in \CalM$ be arbitrary.
  Then, for any $\varphi \in C_c^\infty (\CC; \CC)$, we have
  \[
    \varphi \ast \sigma
    \in \overline{\strut \linspan_{\CC} \, \{ T_z \, \sigma \colon z \in \CC \}} ,
  \]
  where the closure is defined as in \Cref{eq:ClosureDefinition} and where
  $T_z \sigma : \CC \to \CC, w \mapsto \sigma(w - z)$ denotes the translation of $\sigma$
  by $z \in \CC$, and the convolution $\varphi \ast \sigma : \CC \to \CC$ is given by
  $\varphi \ast \sigma (z) = \int_{\CC} \varphi (w) \, \sigma(z - w) \, d w$,
  where integration is with respect to the Lebesgue measure on $\CC \cong \R^2$.
\end{lemma}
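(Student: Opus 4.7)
The approach is to realize $\varphi \ast \sigma$ as a locally uniform limit of Riemann sums
\[
  S_n(z) \;=\; \sum_{j=1}^{N_n} \varphi(w_j^{(n)}) \, \lambda(Q_j^{(n)}) \cdot \sigma\bigl(z - w_j^{(n)}\bigr),
\]
where $\{Q_j^{(n)}\}_{j=1}^{N_n}$ is a measurable partition of $\supp\varphi$ with $\max_j \diam Q_j^{(n)} \to 0$ and $w_j^{(n)} \in Q_j^{(n)}$. Each $S_n$ is tautologically a complex linear combination of translates $T_{w_j^{(n)}}\sigma$, so it suffices to show $\| S_n - \varphi \ast \sigma \|_{L^\infty(K)} \to 0$ for every compact $K \subset \CC$.

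Fix such a $K$. The set $K' := K - \supp\varphi = \{z - w \colon z \in K, w \in \supp\varphi\}$ is compact and contains every argument of $\sigma$ appearing in $S_n(z)$ or $(\varphi \ast \sigma)(z)$ for $z \in K$; local boundedness of $\sigma$ gives $M := \sup_{K'}|\sigma| < \infty$. Writing $S_n(z) - (\varphi \ast \sigma)(z) = \sum_j \int_{Q_j^{(n)}} [\varphi(w_j^{(n)})\sigma(z - w_j^{(n)}) - \varphi(w)\sigma(z - w)]\, dw$ and splitting the bracketed term as $(\varphi(w_j^{(n)}) - \varphi(w))\sigma(z - w_j^{(n)}) + \varphi(w)(\sigma(z - w_j^{(n)}) - \sigma(z - w))$, the first summand is controlled by $\|\nabla\varphi\|_\infty \cdot M \cdot \lambda(\supp\varphi) \cdot \max_j \diam Q_j^{(n)}$, which tends to $0$ uniformly in $z \in K$ as $n \to \infty$.

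The second summand is the main obstacle, and is exactly where the hypothesis $\sigma \in \CalM$ enters. Given $\eta > 0$, let $D$ denote the closure of the discontinuity set of $\sigma$; since $\lambda(D) = 0$ and $\lambda(\{u \colon \operatorname{dist}(u, D) < \rho\} \cap K') \to 0$ as $\rho \to 0$, one may choose $\delta > 0$ so small that $U := \{u \colon \operatorname{dist}(u, D) < \delta\}$ satisfies $\lambda\bigl((U + B_\delta(0)) \cap K'\bigr) < \eta$. On the compact set $K' \setminus U$, $\sigma$ is continuous and hence uniformly continuous. For mesh $< \delta$, I split the $w$-integral into the ``good'' region where $z - w \notin U + B_\delta(0)$ (so that both $z - w$ and $z - w_j^{(n)}$ then lie in $K' \setminus U$) and the complementary ``bad'' region: on the good region, uniform continuity yields $|\sigma(z - w_j^{(n)}) - \sigma(z - w)| \leq \eta$ for sufficiently small mesh, producing a contribution bounded by $\|\varphi\|_\infty \lambda(\supp\varphi) \cdot \eta$; the bad region is contained in $\{w \in \supp\varphi \colon z - w \in U + B_\delta(0)\}$, which has Lebesgue measure less than $\eta$ by construction, contributing at most $2\|\varphi\|_\infty M \eta$. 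Since $\eta > 0$ was arbitrary, this yields $\|S_n - \varphi \ast \sigma\|_{L^\infty(K)} \to 0$, as required.
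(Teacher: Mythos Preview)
Your proof is correct and follows essentially the same approach as the paper: approximate $\varphi \ast \sigma$ by Riemann sums of translates of $\sigma$, and control the error by splitting into a ``good'' region (where $z-w$ stays away from the closure $D$ of the discontinuity set, so uniform continuity of $\sigma$ applies) and a ``bad'' region of small measure (where only the crude bound $2M\|\varphi\|_\infty$ is used). The paper organizes the details a bit differently---it covers $\overline{D}$ by finitely many small cubes rather than a metric neighborhood, and uses the averages $\theta_{k,\ell} = \int_{\Delta_{k,\ell}} \varphi$ instead of point values $\varphi(w_j^{(n)})\lambda(Q_j^{(n)})$, thereby avoiding your ``first summand''---but these are cosmetic variations on the same argument.
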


\begin{proof}
  Define $V := \strut \linspan_{\CC} \, \{ T_z \, \sigma \colon z \in \CC \}$.
  Let $K \subset \CC$ be compact and $\eps > 0$.
  Identifying $\CC \cong \R^2$, choose $A > 0$ so large that $\supp \varphi \subset (-A,A)^2$
  and $K \subset (-A,A)^2$.
  We will construct $g \in V$ satisfying $\| g - \varphi \ast \sigma \|_{L^\infty([-A,A]^2)} \leq 2 \eps$.
  Since every compact set $K \subset \CC \cong \R^2$ is contained in $[-A,A]^2$ for sufficiently
  large $A > 0$, and by definition of the closure in \Cref{eq:ClosureDefinition}, this easily
  implies the result.
  The construction of $g$ will be divided into three steps.

  \medskip{}

  \noindent
  \textbf{Step 1:} \emph{(Constructing a suitable covering of the set of discontinuities of $\sigma$):}
  Choose
  \begin{equation}
    0 < \delta
    \leq \eps / \bigl(1 + 10 \, \| \sigma \|_{L^\infty([-2 A, 2 A]^2)} \, \| \varphi \|_{L^\infty}\bigr)
    .
    \label{eq:ConvolutionApproximationDeltaChoice}
  \end{equation}
  Since $\sigma \in \CalM$, we have $\lambda(\overline{D}) = 0$,
  with ${D := \{ z \in \CC \colon \sigma \text{ not continuous at } z \}}$
  and $\lambda$ denoting the Lebesgue measure.
  It is well-known%
  \footnote{One way to see this is to note by outer regularity that there is an open
  set $U \supset \overline{D}$ satisfying $\lambda(U) < \delta/2$.
  Furthermore, the open set $U$ is a countable union $U = \bigcup_{i=1}^\infty Q_i '$
  of cubes $Q_i '$ with disjoint interiors (see e.g.~\cite[Lemma~2.43]{FollandRA}),
  which ensures that $\sum_{i=1}^\infty \lambda(Q_i ') = \lambda(U) < \delta/2$.
  For each $i$, one can then find an \emph{open} cube $Q_i \supset Q_i '$
  with $\lambda(Q_i) < \lambda(Q_i ') + 2^{-(i+1)} \delta$,
  so that $\sum_{i=1}^\infty \lambda(Q_i) < \delta$
  and $\overline{D} \subset U \subset \bigcup_{i=1}^\infty Q_i$.}
  that this implies $\overline{D} \subset \bigcup_{i=1}^\infty Q_i$
  for suitable open cubes $Q_i \subset \R^2$ with $\sum_{i=1}^\infty \lambda(Q_i) \leq \delta$.
  By compactness, $\overline{D} \cap [-2 A, 2 A]^2 \subset \bigcup_{i=1}^N Q_i =: W$
  for a suitable $N \in \N$.

  Since $\sigma$ is (uniformly) continuous on the compact set
  ${\Omega := [-2 A, 2 A]^2 \setminus W}$, we can find $m \in \N$ such that
  $m > 8 A \sqrt{N / \delta}$ and such that
  \begin{equation}
    \big| \sigma(s) - \sigma(t) \big| \leq \frac{\eps}{1 + \| \varphi \|_{L^1}}
    \quad \text{for all} \quad
    s,t \in \Omega \text{ with } \| s - t \|_\infty \leq \frac{2 A}{m} .
    \label{eq:ConvolutionApproximationUniformContinuity}
  \end{equation}

  \medskip{}

  \noindent
  \textbf{Step 2:} \emph{(Constructing $g$):}
  For $k, \ell \in \FirstN{m}$, set
  ${y_{k,\ell} := - (A, A)^T + \frac{2 A}{m} (k-1, \ell-1)^T \! \in \R^2 \cong \CC}$
  and $\Delta_{k,\ell} := y_{k,\ell} + \frac{2 A}{m} [0,1)^2 \subset \R^2 \cong \CC$,
  as well as $\theta_{k,\ell} := \int_{\Delta_{k,\ell}} \varphi(y) \, d y \in \CC$.
  Note that $[-A,A)^2 = \biguplus_{k,\ell \in \FirstN{m}} \Delta_{k,\ell}$
  and also that $g \in V$ for
  \[
    g : \quad
    \CC \to \CC, \quad
    z \mapsto \sum_{k,\ell \in \FirstN{m}}
                \theta_{k,\ell} \, \sigma(z - y_{k,\ell}) .
  \]

  We want to prove $|(\varphi \ast \sigma)(z) - g(z)| \leq 2 \eps$ for all $z \in [-A,A]^2$.
  To this end, first note because of
  $\supp \varphi \subset (-A,A)^2 \subset \biguplus_{k,\ell \in \FirstN{m}} \Delta_{k,\ell}$ that
  \(
    (\varphi \ast \sigma)(z)
    = \sum_{k,\ell \in \FirstN{m}}
        \int_{\Delta_{k,\ell}}
          \varphi(y) \sigma(z - y)
        \, d y ,
  \)
  and hence
  \begin{equation}
    \big| (\varphi \ast \sigma) (z) - g(z) \big|
    \leq \sum_{k,\ell \in \FirstN{m}}
           \int_{\Delta_{k,\ell}}
             |\varphi(y)| \cdot \big| \sigma(z - y) - \sigma(z - y_{k,\ell}) \big|
           \, d y .
    \label{eq:ConvolutionApproximationBasicEstimate}
  \end{equation}

  \medskip{}

  \noindent
  \textbf{Step 3:} \emph{(Completing the proof):}
  Fix $z \in [-A,A]^2$.
  In order to further estimate the right-hand side of \Cref{eq:ConvolutionApproximationBasicEstimate},
  define the ``good index set'' as
  $I_g := \{ (k,\ell) \in \FirstN{m}^2 \colon (z - \Delta_{k,\ell}) \cap W = \emptyset\}$
  and the ``bad index set'' as $I_b:= \FirstN{m}^2 \setminus I_g$.

  For $(k,\ell) \in I_g$, we have $z - \Delta_{k,\ell} \subset [-2A, 2A]^2 \setminus W = \Omega$.
  Thus, we see for each $y \in \Delta_{k,\ell}$ that ${z - y, z - y_{k,\ell} \in \Omega}$
  with $\| (z - y) - (z - y_{k,\ell}) \|_\infty = \| y_{k,\ell} - y \|_\infty \leq \frac{2 A}{m}$.
  In view of \Cref{eq:ConvolutionApproximationUniformContinuity}, this implies
  $|\sigma(z - y) - \sigma(z - y_{k,\ell})| \leq \eps / (1 + \| \varphi \|_{L^1})$.
  Overall, we thus see
  \begin{equation}
    \begin{split}
      & \sum_{(k,\ell) \in I_g}
          \int_{\Delta_{k,\ell}}
            |\varphi(y)| \cdot |\sigma(z - y) - \sigma(z - y_{k,\ell})|
          \, d y \\
      & \leq \frac{\eps}{1 + \| \varphi \|_{L^1}}
             \cdot \sum_{(k,\ell) \in I_g}
                     \int_{\Delta_{k,\ell}}
                       |\varphi(y)|
                     \, d y
        \leq \eps \cdot \frac{\| \varphi \|_{L^1}}{1 + \| \varphi \|_{L^1}}
        \leq \eps .
    \end{split}
    \label{eq:ConvolutionApproximationGoodPart}
  \end{equation}

  On the other hand, for $(k,\ell) \in I_b$ we have $\Delta_{k,\ell} \cap (z - W) \neq \emptyset$.
  Recall that $W = \bigcup_{j=1}^N Q_j$ for certain open cubes $Q_j$,
  say $Q_j = \gamma_j + (-r_j, r_j)^2$,
  where $\sum_{j=1}^N (2 r_j)^2 = \sum_{j=1}^N \lambda(Q_j) \leq \delta$.
  Since $\Delta_{k,\ell}$ is a cube with side-length $2A / m$ satisfying
  $\Delta_{k,\ell} \cap (z - W) \neq \emptyset$, we see
  \[
    \Delta_{k,\ell}
    \subset z - W + \frac{2 A}{m} \, \bigl[-1, 1\bigr]^2
    \subset \bigcup_{j=1}^N
             \bigg(
               z - \gamma_j + \Bigl( \frac{2 A}{m} + r_j \Bigr) [-1,1]^2
             \bigg) ,
  \]
  where the right-hand side is \emph{independent} of the choice of $(k,\ell) \in I_b$.
  Combining this with the elementary estimate $(a + b)^2 \leq 2 \, (a^2 + b^2)$, we thus see
  \begin{align*}
    \sum_{(k,\ell) \in I_b} \!\!\!
      \lambda(\Delta_{k,\ell})
    & = \lambda \bigg(
                  \biguplus_{(k,\ell) \in I_b} \!\!
                    \Delta_{k,\ell}
                \bigg)
      \leq \sum_{j=1}^N \!
             \Big[
               4 \cdot \Big( \frac{2 A}{m} \,+\, r_j \Big)^2
             \Big] \\
    & \leq 8 \sum_{j=1}^N \!
               \Big(
                 \frac{4 A^2}{m^2} \,+\, r_j^2
               \Big)
      \leq \Big( \frac{8 A}{m} \Big)^2 \, N + 2 \delta
      \leq 3 \delta .
  \end{align*}
  Here, the last step used that we chose $m > 8A \sqrt{N / \delta}$ in Step~1.
  The preceding bound allows us to estimate the sum over the ``bad'' indices
  in \Cref{eq:ConvolutionApproximationBasicEstimate} as follows:
  \begin{equation}
    \begin{split}
      \sum_{(k,\ell) \in I_b}
        \int_{\Delta_{k,\ell}}
          |\varphi(y)| \cdot \big| \sigma(z \!-\! y) - \sigma(z \!-\! y_{k,\ell}) \big|
        \, d y
      & \leq 2 \,
             \| \sigma \|_{L^\infty([-2A, 2A]^2)} \,
             \| \varphi \|_{L^\infty}
             \sum_{(k,\ell) \in I_b} \!\!
               \lambda(\Delta_{k,\ell}) \\
      & \leq 6 \delta \,
             \| \sigma \|_{L^\infty([-2A, 2A]^2)} \,
             \| \varphi \|_{L^\infty}
        \leq \eps ,
    \end{split}
    \label{eq:ConvolutionApproximationBadPart}
  \end{equation}
  by our choice of $\delta$ in \Cref{eq:ConvolutionApproximationDeltaChoice}.
  Overall, combining
  \Cref{eq:ConvolutionApproximationBasicEstimate,eq:ConvolutionApproximationGoodPart,eq:ConvolutionApproximationBadPart}
  we see ${|(\varphi \ast \sigma)(z) - g(z)| \leq 2 \eps}$ for all $z \in [-A,A]^2 \supset K$.
  Since $g \in V$, this easily yields the claim $\varphi \ast \sigma \in \overline{V}$.
\end{proof}

Using the preceding lemma, we can now generalize \Cref{prop:GeneralDensityResultWithSmoothness}
to the setting of spaces that may contain non-smooth functions.

\begin{theorem}\label{thm:GeneralDensityResult}
  For $m \in \N_0$, define
  \[
    \CalH_m
    := \big\{
         \sigma : \CC \to \CC
         \quad \colon \quad
         \exists \, \gamma \in C^\infty (\CC; \CC) :
           \sigma = \gamma \text{ almost everywhere and } \Delta^m \gamma \equiv 0
       \big\} .
  \]

  Let $V \subset \CalM$ be a (complex) vector space that is closed under dilations and translations.
  If ${V \nsubseteq \bigcup_{m=0}^\infty \CalH_m}$, then $C(\CC; \CC) \subset \overline{V}$.
\end{theorem}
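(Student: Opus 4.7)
My plan is to reduce the statement to the smooth version \Cref{prop:GeneralDensityResultWithSmoothness} by smoothing the elements of $V$ via convolution with bump functions. I would define
\[
  W
  := \linspan_{\CC}
     \bigl\{ \varphi \ast \sigma \,\colon\, \varphi \in C_c^\infty(\CC; \CC),\, \sigma \in V \bigr\} .
\]
Since every $\sigma \in V \subset \CalM$ is locally bounded and $\varphi$ has compact support, standard properties of convolution give $W \subset C^\infty(\CC; \CC)$. The convolution approximation lemma (\Cref{lem:ConvolutionApproximation}), combined with translation-closedness of $V$, yields $\varphi \ast \sigma \in \overline{V}$ for every such pair, and since $\overline{V}$ is itself a complex vector space, we obtain $W \subset \overline{V}$. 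A direct change of variables shows that for $a \neq 0$ and $b \in \CC$ one has $(\varphi \ast \sigma)(a z + b) = \widetilde{\varphi} \ast \widetilde{\sigma}(z)$ with $\widetilde{\sigma}(z) := \sigma(a z + b) \in V$ and $\widetilde{\varphi}(u) := |a|^2 \varphi(a u) \in C_c^\infty(\CC; \CC)$, so $W$ is closed under nontrivial affine changes of variable; the $a = 0$ case produces constant functions, which lie in $W$ because $V$ already contains all constants (apply $\sigma_{0,b}$ to any $\sigma \in V$ that is nonzero at $b$, and rescale).

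It then remains to verify the hypothesis of \Cref{prop:GeneralDensityResultWithSmoothness}, namely that for every $(m, \ell) \in \N_0^2$ some $\psi_{m,\ell} \in W$ satisfies $\del^m \delbar^\ell \psi_{m,\ell} \not\equiv 0$. The key observation here is that if $\del^m \delbar^\ell \psi \equiv 0$, then applying $\del^{k-m} \delbar^{k-\ell}$ forces $\del^k \delbar^k \psi \equiv 0$ for $k := \max(m, \ell)$, and hence $\Delta^k \psi \equiv 0$ via the identity $\Delta^k = 4^k \del^k \delbar^k$ (\Cref{eq:WirtingerLaplaceRepresentation}). Contrapositively, it is enough to produce, for each $k \in \N_0$, a single $\psi_k \in W$ with $\Delta^k \psi_k \not\equiv 0$, and then set $\psi_{m,\ell} := \psi_{\max(m,\ell)}$.

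The construction of $\psi_k$ is the crux of the argument and is where the hypothesis $V \not\subset \bigcup_{m=0}^\infty \CalH_m$ enters. Fix $\sigma_0 \in V \setminus \bigcup_m \CalH_m$, and suppose towards a contradiction that $\Delta^k(\varphi \ast \sigma_0) \equiv 0$ for \emph{every} $\varphi \in C_c^\infty(\CC; \CC)$. Choosing a standard nonnegative mollifier $\rho \in C_c^\infty(\CC; \R)$ with $\int \rho = 1$ and setting $\rho_\eps(z) := \eps^{-2} \rho(z / \eps)$, each $\rho_\eps \ast \sigma_0$ is smooth and polyharmonic of order $k$, while $\rho_\eps \ast \sigma_0 \to \sigma_0$ in $L_{\loc}^1(\CC)$ as $\eps \to 0^+$, since $\sigma_0$ is locally bounded (hence locally integrable). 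Passing to the limit in the distributional pairing with test functions yields $\Delta^k \sigma_0 = 0$ in the sense of distributions, and a Weyl-type rigidity result for polyharmonic operators (cf.\ \Cref{lem:WeaklyPolyharmonicIsPolyharmonic}) then forces $\sigma_0 \in \CalH_k$, contradicting the choice of $\sigma_0$. This yields the desired $\varphi_k$, so that $\psi_k := \varphi_k \ast \sigma_0 \in W$ does the job. Applying \Cref{prop:GeneralDensityResultWithSmoothness} to $W$ concludes the proof, since $C(\CC; \CC) \subset \overline{W} \subset \overline{\overline{V}} = \overline{V}$. I expect the Weyl-type rigidity step to be the main obstacle, as it requires justifying that an $L_{\loc}^1$-limit of classical polyharmonic functions is itself a.e.\ equal to a smooth polyharmonic function.
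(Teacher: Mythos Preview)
Your proposal is correct and follows essentially the same approach as the paper: define $W$ as the span of convolutions $\varphi \ast \sigma$, verify that $W$ is dilation/translation invariant, use a mollifier-plus-Weyl argument to check the hypothesis of \Cref{prop:GeneralDensityResultWithSmoothness}, and conclude via \Cref{lem:ConvolutionApproximation} that $\overline{W} \subset \overline{V}$. The only cosmetic difference is that you fix a single $\sigma_0 \in V \setminus \bigcup_m \CalH_m$ to derive the contradiction, whereas the paper shows $V \subset \CalH_k$ for all $\sigma \in V$; your version is slightly more direct but logically equivalent.
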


\begin{proof}
  Define
  \(
    W := \linspan_{\CC}
         \big\{
           \varphi \ast \sigma
           \colon
           \varphi \in C_c^\infty (\CC; \CC) \text{ and } \sigma \in V
         \big\} .
  \)
  Since each $\sigma \in V \subset \CalM$ is locally bounded, standard properties of the
  convolution show $W \subset C^\infty (\CC; \CC)$.
  The idea of the proof is to show that \Cref{prop:GeneralDensityResultWithSmoothness}
  is applicable to $W$---so that $C(\CC; \CC) \subset \overline{W}$---and then
  use \Cref{lem:ConvolutionApproximation} to conclude $\overline{W} \subset \overline{V}$.

  \medskip{}

  \noindent
  \textbf{Step 1:} \emph{(Showing that $W$ is closed under dilations and translations):}
  Clearly, it is enough to show for $a,b \in \CC$, $\varphi \in C_c^\infty (\CC; \CC)$,
  and $\sigma \in V$ that $\big( z \mapsto (\varphi \ast \sigma) (a \, z + b) \big) \in W$.

  Let us first consider the case $a \neq 0$.
  Note that if we identify $\CC \cong \R^2$, then the function $z \mapsto a \, z$
  corresponds to the map
  \(
    \R^2 \to \R^2,
    (x,y) \mapsto \left(
                    \begin{smallmatrix}
                      \Re a & - \Im a \\
                      \Im a & \Re a
                    \end{smallmatrix}
                  \right)
                  \left(\begin{smallmatrix} x \\ y \end{smallmatrix}\right)
    ,
    \vphantom{\sum_j}
  \)
  as follows from the identity
  \(
    a \cdot (x + i \, y)
    = (x \cdot \Re a - y \cdot \Im a)
      + i ( x \cdot \Im a + y \cdot \Re a ) .
    \strut
  \)
  Since
  \({
    \det
    \left(
      \begin{smallmatrix}
        \Re a & - \Im a \\
        \Im a & \Re a
      \end{smallmatrix}
    \right)
    = |a|^2 ,
  }\)
  this justifies the application of the change-of-variables formula in the following calculation,
  in which we use $\sigma_{a,b}$ as defined in \Cref{eq:TranslationDilation}
  and $\varphi_a (z) := \varphi (a z)$:
  \begin{equation}
    \begin{split}
      (\varphi \ast \sigma) (a \, z + b)
      & = \int_{\CC}
            \varphi(w) \,
            \sigma (a \, z + b - w)
          \, d w
        = \int_{\CC}
            \varphi_a (a^{-1} w) \,
            \sigma_{a,b} (z - a^{-1} w)
          \, d w \\
      & = |a|^2 \, \int_{\CC}
                     \varphi_a (v) \cdot \sigma_{a,b} (z - v)
                   \, d v
        = |a|^2 \cdot (\varphi_a \ast \sigma_{a,b}) (z) .
    \end{split}
    \label{eq:ConvolutionTransformation}
  \end{equation}
  Since $\varphi_a \in C_c^\infty(\CC; \CC)$ and $\sigma_{a,b} \in V$,
  this shows $\big( z \mapsto (\varphi \ast \sigma)(a \, z + b) \big) \in W$, as claimed.

  Finally, let us consider the case $a = 0$.
  Note that $z \mapsto (\varphi \ast \sigma)(0 \cdot z + b) = (\varphi \ast \sigma)(b)$
  is a constant function.
  Furthermore, since by assumption $V \nsubseteq \bigcup_{m=0}^\infty \CalH_m$,
  we see in particular that $V \neq \{ 0 \}$.
  Hence, there are $\sigma_0 \in V$ and $\theta_0 \in \CC$ such that $\sigma_0 (\theta_0) \neq 0$.
  Since $V$ is a vector space and closed under dilations and translations, this implies
  $\bigl(z \mapsto \sigma_0 (0 \cdot z + \theta_0)\bigr) \in V$, so that $V$ contains
  the constant function $\Indicator_{\CC} : \CC \to \CC, z \mapsto 1$.
  Now, choose $\varphi_0 \in C_c^\infty (\CC; \CC)$ with $\int_{\CC} \varphi_0(z) \, d z = 1$
  and note $\Indicator_{\CC} = \varphi_0 \ast \Indicator_{\CC} \in W$,
  so that also $W$ contains the constant functions.
  This easily yields the claim in case of $a = 0$.

  \medskip{}

  \noindent
  \textbf{Step 2:} \emph{(Showing that \Cref{prop:GeneralDensityResultWithSmoothness} applies to $W$):}
  Assume towards a contradiction that this is not true; in view of Step~1 this means that there
  exist $m,\ell \in \N_0$ such that $\del^m \delbar^\ell \psi \equiv 0$ for all $\psi \in W$.
  Let $k := \max \{ m, \ell \}$ and note thanks to \Cref{eq:WirtingerLaplaceRepresentation} that
  \begin{equation}
    \Delta^k \psi
    = 4^k \cdot \del^k \delbar^k \psi
    = 4^k \cdot \del^{k-m} \delbar^{k - \ell} \bigl[\, \del^m \delbar^\ell \psi \,\bigr]
    \equiv 0
    \qquad \forall \, \psi \in W .
    \label{eq:GeneralDensityContradictionProperty}
  \end{equation}
  We will show that this implies $V \subset \CalH_k$, which will provide the desired contradiction.

  Define $\eta : \CC \to \R$ via $\eta(z) := C \cdot \exp(1 / (|z|^2 - 1))$ if $|z| < 1$
  and $\eta(z) := 0$ otherwise, where $C > 0$ is chosen such that $\int_{\CC} \eta(z) \, d z = 1$.
  Then it is shown in \cite[Section~C.5]{EvansPDE} that $\eta \in C_c^\infty (\CC; \R)$.
  For $\eps > 0$ define $\eta_\eps : \CC \to \R, z \mapsto \eps^{-2} \cdot \eta(z/\eps)$.
  Let $\sigma \in V \subset \CalM$ be arbitrary and set $\sigma_\eps := \eta_\eps \ast \sigma \in W$.
  By \Cref{eq:GeneralDensityContradictionProperty}, we thus have $\Delta^k \sigma_\eps \equiv 0$.

  Note that $\sigma$ is locally bounded and hence $\sigma \in L_{\loc}^1 (\CC ; \CC)$,
  so that \cite[Theorem~7 in Appendix~C]{EvansPDE} shows $\sigma_\eps \to \sigma$
  as $\eps \downarrow 0$, with convergence in $L^{1}_{\loc}(\CC; \CC)$.
  Using partial integration (which is justified since $\sigma_\eps \in C^\infty(\CC; \CC)$),
  we thus see for arbitrary $\varphi \in C_c^\infty(\CC; \R)$ that
  \[
    \int_{\CC}
      \sigma(z) \, \Delta^k \varphi(z)
    \, d z
    = \lim_{\eps \downarrow 0}
      \int_{\CC}
        \sigma_\eps (z) \, \Delta^k \varphi(z)
      \, d z
    = \lim_{\eps \downarrow 0}
      \int_{\CC}
        \Delta^k \sigma_\eps (z) \, \varphi(z)
      \, d z
    = 0 .
  \]
  This means that $\Delta^k \sigma = 0$ in the sense of distributions.
  It is a folklore fact that this implies ${\sigma = g}$ almost everywhere for a smooth function
  $g \in C^\infty(\CC; \CC)$ with $\Delta^k g \equiv 0$; see \Cref{lem:WeaklyPolyharmonicIsPolyharmonic}
  for a formal proof.
  Hence, $\sigma \in \CalH_k$.
  Since $\sigma \in V$ was arbitrary, this shows $V \subset \CalH_k$,
  contradicting the assumptions of the theorem.

  We have thus shown that \Cref{prop:GeneralDensityResultWithSmoothness} applies to $W$,
  meaning $C(\CC; \CC) \subset \overline{W}$.

  \medskip{}

  \noindent
  \textbf{Step 3:} \emph{(Completing the proof):}
  For $\varphi \in C_c^\infty(\CC; \CC) \vphantom{\sum_j}$ and $\sigma \in V \subset \CalM$,
  \Cref{lem:ConvolutionApproximation} shows that
  \(
    \varphi \ast \sigma
    \in \overline{
          \linspan_{\CC}
          \{
            T_z \, \sigma
            \colon
            z \in \CC
          \}
        }
    \subset \overline{V} ,
  \)
  since $V$ is a vector space and closed under translations.
  Because $\overline{V}$ is a vector space, this implies $W \subset \overline{V}$ and thus
  $C(\CC; \CC) \subset \overline{W} \subset \overline{\overline{V}} = \overline{V}$.
\end{proof}

In order to deduce the ``sufficient part'' of the universal approximation theorem
from \Cref{thm:GeneralDensityResult}, the following lemma will be helpful.

\begin{lemma}\label{lem:FromOneDimensionToHigherDimensions}
  For $f : \CC \to \CC$, $b \in \CC$, and $a \in \CC^d$, define
  \begin{equation}
    f^{(a,b)} : \quad
    \CC^d \to \CC, \quad
    z \mapsto f(b + a^T z) .
    \label{eq:DimensionIncreasingDilation}
  \end{equation}
  Define $\varrho_\CC : \CC \to \CC, z \mapsto \max \{ 0, \Re z \}$
  and let $\CalF \subset \{ f : \CC \to \CC \}$ with $\varrho_\CC \in \overline{\CalF}$.

  If $\CalG \subset \{ g : \CC^d \to \CC \}$ is a (complex) vector space
  that satisfies $\strut f^{(a,b)} \in \CalG$ for all $f \in \CalF$, $b \in \CC$ and $a \in \CC^d$,
  then $C(\CC^d; \CC) \subset \overline{\CalG}$.
\end{lemma}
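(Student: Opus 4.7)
The plan is to first use the hypothesis to lift the one-dimensional function $\varrho_\CC$ into every complex affine direction, producing in $\overline{\CalG}$ all ``complex ReLU ridge functions'' $z \mapsto \max\{0,\Re(b + a^T z)\}$, and then to reduce the universal approximation claim to the classical universal approximation theorem for real-valued shallow networks with the standard ReLU activation $\varrho(t) = \max\{0,t\}$ on $\R^{2d}$.

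First I would verify that $\overline{\CalG}$, like $\CalG$, is a complex vector space: sums and scalar multiples of locally uniformly convergent sequences still converge locally uniformly. Next, fix $a \in \CC^d$ and $b \in \CC$; the map $\phi_{a,b} : \CC^d \to \CC$, $z \mapsto b + a^T z$ is continuous and therefore sends compact sets to compact sets. Given a compact $K \subset \CC^d$ and $\eps > 0$, the assumption $\varrho_\CC \in \overline{\CalF}$ yields $f \in \CalF$ with $\|\varrho_\CC - f\|_{L^\infty(\phi_{a,b}(K))} \leq \eps$; then $f^{(a,b)} \in \CalG$ satisfies $\|\varrho_\CC^{(a,b)} - f^{(a,b)}\|_{L^\infty(K)} \leq \eps$. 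Hence $\varrho_\CC^{(a,b)} \in \overline{\CalG}$ for every $a \in \CC^d$ and every $b \in \CC$.

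The second step is to identify what family of functions this produces. Writing $z_j = x_j + i y_j$, $a_j = \alpha_j + i \beta_j$, and $b = b_1 + i b_2$, a direct calculation gives $\Re(b + a^T z) = b_1 + \sum_{j=1}^d (\alpha_j x_j - \beta_j y_j)$. As $(a,b)$ ranges over $\CC^d \times \CC$, the coefficients $(b_1; \alpha_1, -\beta_1, \ldots, \alpha_d, -\beta_d)$ sweep out \emph{every} real affine map $L : \R^{2d} \to \R$ under the identification $\CC^d \cong \R^{2d}$ via $z \leftrightarrow (\Re z, \Im z)$. Consequently $\overline{\CalG}$ contains $(x,y) \mapsto \max\{0, L(x,y)\}$ for every such $L$, and since $\overline{\CalG}$ is a complex vector space, it contains every finite complex linear combination of these ridge functions.

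Finally I would close the argument using the classical real-variable universal approximation theorem (Pinkus, \cite{PinkusUniversalApproximation}): since $\varrho$ is not a polynomial, finite real linear combinations of $(x,y) \mapsto \varrho(L(x,y))$ with $L$ real affine on $\R^{2d}$ are dense in $C(\R^{2d}; \R)$ in the topology of locally uniform convergence. Any $f \in C(\CC^d; \CC)$ decomposes as $f = \Re f + i\, \Im f$ with $\Re f, \Im f \in C(\R^{2d}; \R)$, so approximating both parts separately and combining yields approximations from $\overline{\CalG}$, giving $f \in \overline{\overline{\CalG}} = \overline{\CalG}$. I do not anticipate a substantial obstacle here; the only delicate point is keeping careful track of compact sets when composing the one-dimensional approximations with the continuous maps $\phi_{a,b}$, which is why one must work with $\phi_{a,b}(K)$ rather than $K$ itself in the first step.
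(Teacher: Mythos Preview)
Your proposal is correct and follows essentially the same approach as the paper: both arguments identify $\Re(b + a^T z)$ with an arbitrary real affine form on $\R^{2d}$ and then invoke the classical real-variable universal approximation theorem for the ReLU. The only difference is organizational: the paper fixes a target function, first approximates it by a real ReLU network, and then replaces $\varrho_\CC$ by a single $\phi \in \CalF$ to land directly in $\CalG$, whereas you first place each ridge function $\varrho_\CC^{(a,b)}$ into $\overline{\CalG}$ and then close up using $\overline{\overline{\CalG}} = \overline{\CalG}$; both routes are equally valid.
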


\begin{proof}
  It is enough to show $C(\CC^d; \R) \subset \overline{\CalG}$;
  the case of complex-valued functions can be handled by noting
  that $\Phi + i \Psi \in \CalG$ if $\Phi, \Psi \in \CalG$.
  Thus, let $\psi \in C(\CC^d; \R)$, $R > 0$ and $\eps > 0$.
  We want to construct $g \in \CalG$ satisfying
  $\| \psi - g \|_{L^\infty(\overline{B_R}(0))} \leq \eps$.

  Define $\underline{\psi} : \R^{2 d} \cong \R^d \times \R^d \to \R, (x,y) \mapsto \psi(x + i y)$
  and $\varrho : \R \to \R, x \mapsto \max \{ 0, x \}$.
  Furthermore, for $z = x + i y$ with $x,y \in \R^d$,
  let us write $\underline{z} := (x, y)^T \in \R^{2 d}$,
  noting that $| \underline{z} | = |z|$.
  Since $\varrho$ is continuous but does not coincide almost everywhere with a polynomial,
  the classical universal approximation theorem (see \cite{PinkusUniversalApproximation})
  yields $M \in \N$ and $\beta_1,\dots,\beta_M \in \R^{2 d}$
  as well as $\alpha_1,\dots,\alpha_M, \gamma_1,\dots,\gamma_M \in \R$
  such that $\big\| \underline{\psi} - h \big\|_{L^\infty(\overline{B_R}(0))} \leq \frac{\eps}{2}$,
  where $h : \R^{2 d} \to \R$ is defined by
  $h(v) = \sum_{j=1}^M \alpha_j \, \varrho(\gamma_j + \beta_j^T v)$.

  Let us write $\R^{2 d} \ni \beta_j = \bigl(\beta_j^{(1)}, \beta_j^{(2)}\bigr) \in \R^d \times \R^d$
  and define $w_j := \beta_j^{(1)} - i \, \beta_j^{(2)} \in \CC^d$ and
  $b_j := \gamma_j \in \R \subset \CC$ for ${j \in \{ 1, \dots, M \}}$.
  A straightforward calculation shows
  \begin{equation}
    \Re \bigl(b_j + w_j^T z\bigr)
    = \gamma_j + \beta_j^T \underline{z}
    \qquad \text{and hence} \qquad
    \varrho_{\CC} \bigl(b_j + w_j^T z\bigr)
    = \varrho \bigl(\gamma_j + \beta_j^T \underline{z}\bigr)
    \label{eq:RealLinearformAsComplex}
  \end{equation}
  for all $j \in \FirstN{M}$ and $z \in \CC^d$.

  \smallskip{}

  Set $A := 1 + \sum_{j=1}^M |\alpha_j|$ and choose $C > 0$ with $\big| b_j + w_j^T z \big| \leq C$
  for all $z \in \overline{B_R}(0)$ and $j \in \FirstN{M}$.
  Since ${\varrho_{\CC} \in \overline{\CalF}}$, there is $\phi \in \CalF$ satisfying
  $\| \varrho_{\CC} - \phi \|_{L^\infty(\overline{B_C}(0))} \leq \frac{\eps}{4 A}$.
  With notation as in \Cref{eq:DimensionIncreasingDilation}, define
  \[
    g := \sum_{j=1}^M
           \alpha_j \, \phi^{(w_j,b_j)}
    .
  \]
  Since $\CalG$ is a vector space with $\phi^{(w,b)} \in \CalG$
  for all $b \in \CC$ and $w \in \CC^d$, we see $g \in \CalG$.
  Finally, recall that ${b_j + w_j^T z \in \overline{B_C}(0) \subset \CC}$
  for all $z \in \overline{B_R}(0) \subset \CC^d$.
  Thus, we see by definition of $h$ and in view of \Cref{eq:RealLinearformAsComplex} that
  \begin{align*}
    |\psi(z) - g(z)|
    & \leq \big| \underline{\psi}(\underline{z}) - h(\underline{z}) \big|
         + \big| h(\underline{z}) - g(z) \big| \\
    & \leq \frac{\eps}{2}
           + \sum_{j=1}^M
               |\alpha_j|
               \cdot \big|
                       \varrho_{\CC}\bigl(b_j + w_j^T z\bigr)
                       - \phi\bigl(b_j + w_j^T z\bigr)
                     \big|
      \leq \frac{\eps}{2}
           + \frac{\eps}{4 A} \sum_{j=1}^M |\alpha_j|
      \leq \eps
  \end{align*}
  for all $z \in \overline{B_R}(0)$, as desired.
\end{proof}

We close this subsection by deducing the ``sufficient part'' of
\Cref{thm:ShallowUniversalApproximation} from \Cref{thm:GeneralDensityResult}
and \Cref{lem:FromOneDimensionToHigherDimensions}.

\begin{theorem}\label{thm:ShallowUniversalApproximationSufficient}
  Let $\sigma \in \CalM$ and suppose that there \emph{does not} exist a (complex)
  polyharmonic function $h \in C^\infty(\CC; \CC)$ satisfying $\sigma = h$ almost everywhere.
  Then for each input dimension $d \in \N$, the set $\NN_\sigma^d$ of shallow
  complex-valued neural networks with activation function $\sigma$ and $d$-dimensional input
  has the universal approximation property,
  meaning $C(\CC^d; \CC) \subset \overline{\NN_\sigma^d}$.
\end{theorem}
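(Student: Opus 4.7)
The plan is to reduce immediately to the one-dimensional case and then invoke the two main tools already proved: \Cref{thm:GeneralDensityResult} for density in $C(\CC;\CC)$ and \Cref{lem:FromOneDimensionToHigherDimensions} for the passage to higher input dimensions.

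First, I would handle $d = 1$ by applying \Cref{thm:GeneralDensityResult} directly to $V := \NN_\sigma^1$. The verifications are routine:
(i) $\NN_\sigma^1$ is a complex vector space, because the sum of two networks of the form $c + \sum_j a_j \sigma(b_j + w_j z)$ is again of this form (concatenate the neurons and add the constants), and scalar multiples likewise lie in $\NN_\sigma^1$.
(ii) It is closed under dilations and translations: if $\varphi(z) = c + \sum_j a_j \sigma(b_j + w_j z)$, then $\varphi(\alpha z + \beta) = c + \sum_j a_j \sigma\bigl((b_j + w_j \beta) + (w_j \alpha) z\bigr) \in \NN_\sigma^1$.
(iii) $\NN_\sigma^1 \subset \CalM$, since each element is a finite linear combination of affine pullbacks of $\sigma$ plus a constant; local boundedness and the null-set property of the closure of the discontinuity set are preserved by affine changes of variables and by finite sums.
(iv) Most importantly, $\NN_\sigma^1 \nsubseteq \bigcup_{m=0}^\infty \CalH_m$: indeed $\sigma \in \NN_\sigma^1$ (take $N=1$, $a_1 = 1$, $b_1 = 0$, $w_1 = 1$, $c = 0$), while the hypothesis of the theorem says precisely that $\sigma \notin \CalH_m$ for any $m \in \N_0$. \Cref{thm:GeneralDensityResult} then gives $C(\CC;\CC) \subset \overline{\NN_\sigma^1}$.

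Next, for arbitrary $d \in \N$, I would apply \Cref{lem:FromOneDimensionToHigherDimensions} with $\CalF = \NN_\sigma^1$ and $\CalG = \NN_\sigma^d$. The hypothesis $\varrho_\CC \in \overline{\CalF}$ holds because $\varrho_\CC : z \mapsto \max\{0, \Re z\}$ is continuous and we have just shown $C(\CC;\CC) \subset \overline{\NN_\sigma^1}$. Moreover, $\NN_\sigma^d$ is a complex vector space by the same argument as in (i), and for any $f \in \NN_\sigma^1$, $b \in \CC$, $a \in \CC^d$, writing $f(w) = c + \sum_j a_j \sigma(b_j + w_j w)$ gives
\[
  f^{(a,b)}(z)
  = f(b + a^T z)
  = c + \sum_j a_j \, \sigma\bigl((b_j + w_j b) + (w_j a)^T z\bigr)
  \in \NN_\sigma^d .
\]
Thus \Cref{lem:FromOneDimensionToHigherDimensions} applies and yields $C(\CC^d;\CC) \subset \overline{\NN_\sigma^d}$, which is the claim.

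I do not expect any genuine obstacle, since the substantive analytic content (monomial extraction via Wirtinger derivatives, convolution smoothing, and the reduction of the $d$-dimensional approximation problem to a one-dimensional one combined with the real universal approximation theorem for $\varrho$) has already been encapsulated in \Cref{thm:GeneralDensityResult} and \Cref{lem:FromOneDimensionToHigherDimensions}. The only point requiring slight care is verifying that $\NN_\sigma^1 \subset \CalM$, because $\CalM$ is defined as a pointwise (not almost-everywhere) class; this is handled simply by observing that affine changes of variables and finite sums preserve both local boundedness and the property that the closure of the discontinuity set is a Lebesgue null-set.
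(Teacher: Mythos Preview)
Your proposal is correct and follows essentially the same route as the paper: apply \Cref{thm:GeneralDensityResult} to a translation/dilation-invariant space containing $\sigma$ to get density in $C(\CC;\CC)$, then use \Cref{lem:FromOneDimensionToHigherDimensions} to pass to $\CC^d$. The only cosmetic difference is that the paper takes $V = \linspan_{\CC}\{\sigma_{a,b} : a,b \in \CC\}$ rather than $V = \NN_\sigma^1$, but under the hypothesis these coincide (since $\sigma \not\equiv 0$ forces $V$ to contain the constants), and your explicit verification that $\NN_\sigma^1 \subset \CalM$ is a welcome addition that the paper leaves implicit.
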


\begin{proof}
  For $a,b \in \CC$, let $\sigma_{a,b}$ be as defined in \Cref{eq:TranslationDilation}.
  Define $V := \linspan_{\CC} \{ \sigma_{a,b} \colon a,b \in \CC  \}$,
  noting that $V \subset \CalM$ and that $\sigma = \sigma_{1,0} \in V$,
  so that $V \nsubseteq \bigcup_{m=0}^\infty \CalH_m$ by our assumption on $\sigma$.
  Moreover, using the identity $(\sigma_{a,b})_{\alpha,\beta} = \sigma_{a \alpha, b + a \beta}$
  it follows that $V$ is invariant under dilations and translations.
  Therefore, we can apply \Cref{thm:GeneralDensityResult} to conclude
  that $C(\CC;\CC) \subset \overline{V}$.

  Define $\CalF := V$ and note that $\CalG := \NN_\sigma^d \subset \{ g : \CC^d \to \CC \}$
  is a vector space.
  Furthermore, note for $\alpha,\beta,b \in \CC$ and $a \in \CC^d$ with notation as in
  \Cref{lem:FromOneDimensionToHigherDimensions} for arbitrary $z \in \CC^d$ that
  \[
    (\sigma_{\alpha,\beta})^{(a,b)} (z)
    = \sigma_{\alpha,\beta} (b + a^T z)
    = \sigma\bigl(\beta + \alpha (b + a^T z)\bigr)
    = \sigma \bigl(\beta + \alpha b + (\alpha a)^T z\bigr) ,
  \]
  where the function on the right-hand side is an element of $\CalG$.
  This easily implies $f^{(a,b)} \in \CalG$ for arbitrary $f \in \CalF = V$.
  Therefore, \Cref{lem:FromOneDimensionToHigherDimensions} shows
  $C(\CC^d;\CC) \subset \overline{\CalG} = \overline{\NN_\sigma^d}$, as claimed.
\end{proof}

\subsection{Necessity for shallow networks}%
\label{sub:NecessityShallow}

In this section, we prove the ``necessary part'' of \Cref{thm:ShallowUniversalApproximation}.
Precisely, we will prove the following:

\begin{theorem}\label{thm:ShallowUniversalApproximationNecessary}
  Let $\sigma : \CC \to \CC$ and suppose that $\sigma = g$ almost everywhere for a
  (complex-valued) polyharmonic function $g \in C^\infty(\CC; \CC)$.
  Then for any $d \in \N$ and with $\NN_\sigma^d$ as in
  \Cref{sub:IntroductionResults}, we have $C(\CC^d; \CC) \nsubseteq \overline{\NN_\sigma^d} \strut$.
  In fact, there exist $f \in C_c (\CC^d; \R)$ and $\eps > 0$ such that
  $\| f - \Phi \|_{L^1(B_1(0))} \geq \eps$ for all $\Phi \in \NN_\sigma^d$.
\end{theorem}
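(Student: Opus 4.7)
The plan is to show that every $\Phi \in \NN_\sigma^d$ agrees almost everywhere with a smooth polyharmonic surrogate $\tilde\Phi$ of a fixed order $m$ on $\CC^d \cong \R^{2d}$, and then to exhibit a continuous $f$ that cannot be approximated in $L^1(B_1(0))$ by any such $\tilde\Phi$, thanks to a Weyl-type rigidity. I would fix $m \in \N$ with $\Delta^m g \equiv 0$ and let $E := \{w \in \CC \colon \sigma(w) \neq g(w)\}$, which is Lebesgue-null by hypothesis. For $w \in \CC^d \setminus \{0\}$, the $\CC$-linear form $z \mapsto w^T z$ is surjective onto $\CC$ with $\R$-kernel of dimension $2d - 2$, so by Fubini the preimage $\{z \in \CC^d \colon b + w^T z \in E\}$ is null in $\CC^d$. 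Hence for any $\Phi(z) = c + \sum_{j=1}^N a_j\, \sigma(b_j + w_j^T z) \in \NN_\sigma^d$, setting $\tilde c := c + \sum_{j \colon w_j = 0} a_j\, \sigma(b_j)$ and $\tilde\Phi(z) := \tilde c + \sum_{j \colon w_j \neq 0} a_j\, g(b_j + w_j^T z)$ yields $\Phi = \tilde\Phi$ almost everywhere, with $\tilde\Phi \in C^\infty(\CC^d; \CC)$.

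Next I would verify $\Delta^m \tilde\Phi \equiv 0$ via Wirtinger calculus. Since $z \mapsto b + w^T z$ is holomorphic in each $z_k$, the chain rule \eqref{eq:ChainRule} gives $\del_{z_k}[g(b + w^T z)] = w_k \cdot (\del g)(b + w^T z)$ and $\delbar_{z_k}[g(b + w^T z)] = \overline{w_k} \cdot (\delbar g)(b + w^T z)$. Combined with the coordinatewise analogue $\Delta = 4 \sum_{k=1}^d \del_{z_k} \delbar_{z_k}$ of \eqref{eq:WirtingerLaplaceRepresentation} on $\R^{2d}$, this yields $\Delta[g(b + w^T z)] = |w|^2 \cdot (\Delta g)(b + w^T z)$, and iterating $m$ times gives $\Delta^m[g(b + w^T z)] = |w|^{2m} \cdot (\Delta^m g)(b + w^T z) \equiv 0$. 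Constants are polyharmonic of every order, so by linearity $\Delta^m \tilde\Phi \equiv 0$ on $\CC^d$.

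Finally I would take $f \in C_c(\CC^d; \R)$ with $f(z) := \max\{0, 1 - |z|\}$, which is continuous but fails to be $C^2$ at the origin and therefore cannot coincide on $B_1(0)$ with any smooth polyharmonic function. If we had a sequence $\Phi_n \in \NN_\sigma^d$ with $\|f - \Phi_n\|_{L^1(B_1(0))} \to 0$, the associated $\tilde\Phi_n$ would converge to $f$ in $L^1(B_1(0))$; integrating against $\varphi \in C_c^\infty(B_1(0); \CC)$ and using $\Delta^m \tilde\Phi_n \equiv 0$ would force $\Delta^m f = 0$ in the distributional sense on $B_1(0)$. By the Weyl-type \Cref{lem:WeaklyPolyharmonicIsPolyharmonic} (whose proof carries over from $\R^2$ to $\R^{2d}$, since Weyl's lemma is dimension-independent), $f$ would then agree almost everywhere on $B_1(0)$ with a smooth polyharmonic function, and by continuity equal it pointwise, contradicting the non-smoothness of $f$ at the origin. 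The main obstacle I anticipate is precisely this dimension-$2d$ extension of \Cref{lem:WeaklyPolyharmonicIsPolyharmonic}; the Wirtinger chain-rule identity and the Fubini argument for the ``bad'' input set are routine by comparison.
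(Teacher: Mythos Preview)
Your proposal is correct but takes a somewhat different route from the paper. You work directly with the full Laplacian on $\CC^d \cong \R^{2d}$, show $\Delta^m \tilde\Phi \equiv 0$ there, and then invoke a $2d$-dimensional version of \Cref{lem:WeaklyPolyharmonicIsPolyharmonic} to conclude that your explicit non-smooth $f$ cannot be an $L^1$-limit. The paper instead avoids the higher-dimensional Weyl lemma altogether by a \emph{slicing} trick (\Cref{lem:PolyharmonicityObstructsUniversality}): it only checks that for each frozen $w \in \CC^{d-1}$ the function $z_1 \mapsto \Psi(z_1,w)$ is polyharmonic on $\CC$, chooses $f(z) = \gamma(z)\max\{0,\Re z_1\}$, and uses Fubini to find a slice $w_0$ on which $L^1$-convergence holds, reducing everything to the two-real-dimensional \Cref{lem:WeaklyPolyharmonicIsPolyharmonic} as stated. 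Your approach is arguably more direct and yields the slightly stronger information that $\tilde\Phi$ is polyharmonic on all of $\CC^d$; the paper's approach buys economy by never leaving dimension two for the elliptic regularity step. Since Weyl's lemma is genuinely dimension-independent and the inductive proof of \Cref{lem:WeaklyPolyharmonicIsPolyharmonic} goes through verbatim in $\R^{2d}$, the obstacle you anticipate is not a real one, and both arguments are complete.
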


The main ingredient for the proof is the following lemma
which essentially shows that a space consisting of polyharmonic functions
\emph{of a fixed order} cannot have the universal approximation property,
even if one relaxes the topology of locally uniform convergence to the topology
of local $L^1$ convergence.

\begin{lemma}\label{lem:PolyharmonicityObstructsUniversality}
  Let $d \in \N$ and $\CalF \subset \{ F : \CC^d \to \CC \colon F \text{ measurable} \}$,
  and assume that there exists $m \in \N_0$ with the following property:
  For each $F \in \CalF$ and each $w \in \CC^{d-1}$,
  there exists $g = g_{w,F} \in C^{\infty}(\CC;\CC)$ with $\Delta^m g \equiv 0$
  and such that $F(z,w) = g(z)$ for almost all $z \in \CC$.

  Then there exist $\eps > 0$ and $f \in C_c(\CC^d; \R)$ such that
  $\| f - F \|_{L^1(B_1(0))} \geq \eps$ for all $F \in \CalF$.
  In particular this implies $C(\CC^d; \CC) \nsubseteq \overline{\mathcal{F}}$.
\end{lemma}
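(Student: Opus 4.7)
\medskip

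\noindent
\textbf{Proof plan.}
The plan is to reduce the $d$-dimensional statement to the one-dimensional case by a product construction, and then to exploit that the polyharmonic functions of a fixed order $m$ form a closed proper subspace of $L^{1}$ on a small disk.

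First I would fix a cutoff $\chi \in C_c(\CC; \R)$ with $\chi \equiv 1$ on $\overline{B_{1/2}}(0) \subset \CC$ and $0 \leq \chi \leq 1$, and choose any continuous, compactly supported function $f_0 : \CC \to \R$ which is \emph{not} smooth on $B_{1/2}(0)$ -- for instance $f_0(z) = \max\{ 0, 1 - |z| \}$. I would then define
\[
  f(z_1, z_2, \dots, z_d) := f_0(z_1) \cdot \prod_{j=2}^{d} \chi(z_j),
  \qquad \text{so that } f \in C_c(\CC^d; \R).
\]
If $|w| \leq \tfrac{1}{2}$ in $\CC^{d-1}$ (writing $z = (z_1, w)$ with $w = (z_2, \dots, z_d)$), then each coordinate $|w_j| \leq \tfrac{1}{2}$, hence $\chi(w_j) = 1$, and $f(z_1, w) = f_0(z_1)$; moreover the disk $\{z_1 : (z_1,w) \in B_1(0)\}$ has radius $\sqrt{1 - |w|^2} \geq \tfrac{\sqrt{3}}{2} > \tfrac{1}{2}$, so $B_{1/2}(0) \subset \CC$ sits inside the slice. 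Fubini therefore yields, for every $F \in \CalF$,
\[
  \| f - F \|_{L^1(B_1(0))}
  \geq \int_{\{w \in \CC^{d-1} : |w| \leq 1/2\}}
         \| f_0(\cdot) - F(\cdot, w) \|_{L^1(B_{1/2}(0))}
       \, dw .
\]
By the hypothesis, for each such $w$, $F(\cdot, w)$ coincides almost everywhere with some $g_{w,F} \in C^\infty(\CC; \CC)$ satisfying $\Delta^m g_{w,F} \equiv 0$, so the inner $L^1$-distance equals $\| f_0 - g_{w,F} \|_{L^1(B_{1/2}(0))}$.

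The key step is then to show that there is a constant $\eps_0 > 0$, depending only on $f_0$ and $m$, such that
\[
  \| f_0 - g \|_{L^1(B_{1/2}(0))} \geq \eps_0
\]
for every smooth function $g : \CC \to \CC$ with $\Delta^m g \equiv 0$. For this I would define
\[
  \mathcal{P}_m := \bigl\{ h \in L^1(B_{1/2}(0); \CC) :
                           \Delta^m h = 0 \text{ in the distributional sense on } B_{1/2}(0) \bigr\} ,
\]
which is obviously a linear subspace of $L^1(B_{1/2}(0))$ and is closed because the defining condition is preserved under $L^1$-convergence (by testing against $\Delta^m \varphi$ for $\varphi \in C_c^\infty(B_{1/2}(0))$). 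Every restriction $g|_{B_{1/2}(0)}$ lies in $\mathcal{P}_m$. By the folklore version of Weyl's lemma already invoked in the paper (\Cref{lem:WeaklyPolyharmonicIsPolyharmonic}), any element of $\mathcal{P}_m$ is almost everywhere equal to a smooth polyharmonic function; in particular, if $f_0 \in \mathcal{P}_m$, then the continuous function $f_0$ would agree everywhere on $B_{1/2}(0)$ with a smooth function, contradicting the fact that $f_0(z) = \max\{0, 1 - |z|\}$ is not smooth at the origin. Hence $f_0 \notin \mathcal{P}_m$, and since $\mathcal{P}_m$ is closed in $L^1(B_{1/2}(0))$, we obtain $\eps_0 := \operatorname{dist}_{L^1(B_{1/2}(0))}(f_0, \mathcal{P}_m) > 0$.

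Putting everything together, with $c_d := \lambda_{2(d-1)} \bigl( \{ w \in \CC^{d-1} : |w| \leq \tfrac{1}{2} \} \bigr) > 0$ and $\eps := \eps_0 \, c_d$, the Fubini inequality above gives $\| f - F \|_{L^1(B_1(0))} \geq \eps$ for every $F \in \CalF$, which is the desired bound. The final sentence about $C(\CC^d; \CC) \nsubseteq \overline{\CalF}$ then follows because locally uniform convergence on $\overline{B_1}(0)$ implies $L^1(B_1(0))$-convergence. The main (but mild) obstacle is justifying the closedness of $\mathcal{P}_m$ and the ``continuous + polyharmonic-a.e.~implies smooth'' step, both of which are handled cleanly via the paper's Weyl-type lemma.
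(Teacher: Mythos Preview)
Your argument is correct and takes a genuinely different route from the paper's. The paper argues by contradiction: assuming a sequence $F_n \in \CalF$ with $\|f - F_n\|_{L^1(B_1(0))} \to 0$, it uses Fubini plus an almost-everywhere subsequence extraction to find a \emph{single} slice parameter $w_0$ along which $F_{n_k}(\cdot,w_0) \to f(\cdot,w_0)$ in $L^1$, and then tests the limit against $\Delta^m \varphi$ to deduce (via \Cref{lem:WeaklyPolyharmonicIsPolyharmonic}) that $f(\cdot,w_0)$ is smooth, a contradiction. Your approach is direct and quantitative: you identify $\mathcal{P}_m$ as a closed linear subspace of $L^1(B_{1/2}(0))$, observe that $f_0 \notin \mathcal{P}_m$ (again via \Cref{lem:WeaklyPolyharmonicIsPolyharmonic}), extract a positive distance $\eps_0$, and feed this uniform slice bound back through Fubini to get the explicit $\eps = c_d \eps_0$. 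Both proofs hinge on the same Weyl-type regularity lemma, but yours avoids the contradiction/subsequence machinery and yields an explicit $\eps$ in one stroke; it also handles $d = 1$ uniformly without a separate case. The paper's version, on the other hand, needs no closed-subspace argument and works with a single well-chosen $w_0$, which some readers may find slightly more concrete.
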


\begin{rem*}
  In case of $d = 1$, the assumption is to be understood as stating that for each $F \in \CalF$
  there is $g \in C^\infty(\CC;\CC)$ with $\Delta^m g \equiv 0$ and $F = g$ almost everywhere.
\end{rem*}

\begin{proof}
  We only provide the proof for the case $d \geq 2$ and leave the modification for the (easier)
  case $d = 1$ to the reader.
  For brevity, given $g : \CC^d \to \CC$, define
  $g^{[w]} : \CC \to \CC, z \mapsto g(z,w)$ for $w \in \CC^{d-1}$.

  Let $U_1 := B_{1/2}(0) \subset \CC$ and $U_2 := B_{1/2}(0) \subset \CC^{d-1}$,
  noting that $U_1 \times U_2 \subset B_1(0) \subset \CC^d$.
  Choose $\gamma \in C_c(\CC^d; \R)$ with $\gamma \equiv 1$ on $\overline{B_1}(0) \subset \CC^d$
  and define
  \[
    f : \quad
    \CC^d \to \R, \quad
    z = (z_1,\dots,z_d) \mapsto \gamma(z) \cdot \max \bigl\{ 0, \Re z_1 \bigr\} .
  \]
  Assume towards a contradiction that the claim of the lemma is false;
  this implies existence of a sequence $(F_n)_{n \in \N} \subset \CalF$ such that if we set
  \[
    H_n : \quad
    U_2 \to [0,\infty], \quad
    w \mapsto \int_{U_1}
                \big| f(z,w) - F_n(z,w) \big|
              \, d z ,
  \]
  then
  \[
    \| H_n \|_{L^1(U_2)}
    =    \int_{U_1 \times U_2}
           \big| f(z,w) - F_n(z,w) \big|
         \, d (z,w)
    \leq \| f - F_n \|_{L^1(B_1(0))}
    \xrightarrow[n\to\infty]{} 0.
  \]
  It is well-known (see for instance \cite[Corollary~2.32]{FollandRA}) that this implies existence
  of a subsequence $(H_{n_k})_{k \in \N}$ satisfying $H_{n_k} (w) \to 0$ as $k \to \infty$,
  for almost all $w \in U_2$.
  Since $U_2$ has positive measure, we can choose $w_0 \in U_2$ satisfying
  $\big\| f^{[w_0]} - F_{n_k}^{[w_0]} \big\|_{L^1(U_1)} = H_{n_k} (w_0) \to 0$ as $k \to \infty$.
  Furthermore, by the assumptions of the lemma there is for each $k \in \N$ some
  $g_k \in C^\infty(\CC;\CC)$ satisfying $\Delta^m g_k \equiv 0$ and $F_{n_k}^{[w_0]} = g_k$
  almost everywhere.

  For arbitrary $\varphi \in C_c^\infty (U_1; \R)$, this implies
  \begin{align*}
    \int_{U_1}
      f^{[w_0]} (z) \cdot \Delta^m \varphi (z)
    \, d z
    & = \lim_{k \to \infty}
        \int_{U_1}
          F_{n_k}^{[w_0]} (z) \, \Delta^m \varphi(z)
        \, d z
      = \lim_{k \to \infty}
        \int_{U_1}
          g_k (z) \, \Delta^m \varphi(z)
        \, d z \\
    & = \lim_{k \to \infty}
        \int_{U_1}
          \varphi(z) \, \Delta^m g_k (z)
        \, d z
      = 0 ,
  \end{align*}
  meaning that $f^{[w_0]} \in L_{\loc}^1(U_1; \R)$ is weakly polyharmonic of order $m$.
  As shown in \Cref{lem:WeaklyPolyharmonicIsPolyharmonic}, this yields existence of
  $h \in C^\infty(U_1;\R)$ with $f^{[w_0]} = h$ almost everywhere on $U_1$.
  But for $z \in U_1$ we have $(z, w_0) \!\in\! B_1(0)$ and hence
  ${f^{[w_0]}(z) \!=\! f(z, w_0) \!=\! \gamma(z,w_0) \cdot \max \{ 0, \Re z \} \!=\! \max \{ 0, \Re z \}}$.
  On the one hand, this shows that $h$ and $f^{[w_0]}$ are both continuous on $U_1$,
  so that $h = f^{[w_0]}$ everywhere on $U_1$, not just almost anywhere.
  On the other hand, we thus get for all $z \in U_1$ that $h(z) = f^{[w_0]}(z) = \max \{ 0, \Re z \}$,
  where the right-hand side is not a smooth function of $z \in U_1 = B_{1/2}(0) \subset \CC$.
  This contradicts the smoothness of $h$.

  Finally, if we had $C(\CC^d; \CC) \subset \overline{\CalF}$, there would be a sequence
  $(F_n)_{n \in \N} \subset \CalF$ satisfying $F_n \to f$ uniformly on $\overline{B_1}(0)$,
  and hence $\| f - F_n \|_{L^1(B_1(0))} \to 0$ as well, in contradiction to what we just showed.
\end{proof}

We close this subsection by using the above lemma to prove
\Cref{thm:ShallowUniversalApproximationNecessary}.

\begin{proof}[Proof of \Cref{thm:ShallowUniversalApproximationNecessary}]
  Since $g$ is polyharmonic, there is $m \in \N$ with $\Delta^m g \equiv 0$.
  The remainder of the proof is divided into three steps.

  \medskip{}

  \noindent
  \textbf{Step 1:} \emph{(With $g_{a,b}$ as in \Cref{eq:TranslationDilation},
  we have $\Delta^m g_{a,b} \equiv 0$ for all $a,b \in \CC$):}
  To see this, note that $z \mapsto b + a \, z$ is holomorphic, so that elementary
  properties of the Wirtinger derivatives (see \Cref{sec:WirtingerCalculus}) show
  $\overline{\delbar \, \overline{(b + a \, z)}} = \del (b + a z) = a$
  and $\del \overline{(b + a z)} = \overline{\delbar (b + a \, z)} = 0$.
  Thus, the chain rule for Wirtinger derivatives (see \Cref{eq:ChainRule})
  shows for $h \in C^\infty(\CC;\CC)$ that
  \[
    \del \bigl[h(b + a \, z)\bigr]
    = (\del h) (b + a \, z) \cdot \del(b + a \, z)
      + (\delbar h)(b + a \, z) \cdot \del \overline{(b + a \, z)}
    = a \cdot (\del h) (b + a \, z)
  \]
  and
  \[
    \delbar \big[ h(b + a \, z) \big]
    = (\del h)(b + a \, z) \cdot \delbar (b + a \, z)
      + (\delbar h) (b + a \, z) \cdot \delbar \overline{b + a \, z}
    = \overline{a} \cdot (\delbar h)(b + \, a z) .
  \]
  Based on these identities and using \Cref{eq:WirtingerLaplaceRepresentation},
  an induction shows that
  \[
    \Delta^m g_{a, b} (z)
    = 4^m \cdot \del^m \delbar^m \big[ g (b + a \, z) \big]
    = 4^m \cdot a^m \overline{a}^m \cdot (\del^m \delbar^m g) (b + a \, z)
    = |a|^{2m} \cdot (\Delta^m g) (b + a \, z)
    = 0
  \]
  for all $z \in \CC$, as claimed.

  \medskip{}

  \noindent
  \textbf{Step 2:} \emph{(There are $\eps > 0$ and $f \in C_c(\CC^d; \R)$
  with $\| f - \Psi \|_{L^1(B_1(0))} \geq \eps$ for all $\Psi \in \NN_g^d$):}
  Note for $b \in \CC$, $w \in \CC^{d-1}$ and $a \in \CC^d$ that there exists
  $d = d(a,b,w) \in \CC$ satisfying $b + a^T (z,w) = d + a_1 \, z$ for all $z \in \CC$.
  Therefore, Step~1 shows that
  \[
    \Delta^m \bigl(z \mapsto g(b + a^T (z,w))\bigr)
    = \Delta^m g_{a_1, d} \equiv 0
    .
  \]
  By linearity, by definition of $\NN_{g}^d$, and because of $\Delta^m (1) \equiv 0$
  (since $m \in \N$), we thus see that $\Delta^m \bigl(z \mapsto \Psi (z,w) \bigr) \equiv 0$
  for all $\Psi \in \NN_g^d$ and $w \in \CC^{d-1}$.
  Thus, setting $\CalF := \NN_g^d$, \Cref{lem:PolyharmonicityObstructsUniversality} yields
  $f \in C_c(\CC^d;\R)$ and $\eps > 0$
  satisfying $\| f - F \|_{L_1(B_1(0))} \geq \eps$ for all $F \in \CalF$.
  This is precisely what was claimed in this step.

  \medskip{}

  \noindent
  \textbf{Step 3:} \emph{(For each $\Phi \in \NN_\sigma^d$, there is $\Psi \in \NN_g^d$
  with $\Phi = \Psi$ almost everywhere):}
  Let $N := \{ z \in \CC \colon \sigma(z) \neq g(z) \}$.
  By assumption, $N \subset \CC$ is a (Lebesgue) null-set.
  Each $\Phi \in \NN_\sigma^d$ is of the form
  ${\Phi(z) = c + \sum_{j=1}^k \theta_j \, \sigma(b_j + w_j^T z)}$
  for certain $k \in \N$, ${\theta_1,b_1,\dots,\theta_k,b_k,c \in \CC}$
  and $w_1,\dots,w_k \in \CC^d$.

  Let $J_0 := \bigl\{ j \in \FirstN{k} \colon w_j = 0 \bigr\}$
  and ${J_1 := \FirstN{k} \setminus J_0}$.
  For each $j \in J_1$, \Cref{lem:HyperplaneNullSet} shows that
  $M_j := \{ z \in \CC^d \colon b_j + w_j^T z \in N \}$ is a null-set,
  and hence so is $M := \bigcup_{j \in J_1} M_j$.
  For each $j \in J_0$, we have $\theta_j \, \sigma (b_j + w_j^T z) \equiv \theta_j \, \sigma(b_j)$
  and hence $c + \sum_{j \in J_0} \theta_j \, \sigma(b_j + w_j^T z) \equiv c'$
  for a suitable $c' \in \CC$.
  On the other hand, for $j \in J_1$ and $z \in \CC^d \setminus M$, we have
  $\sigma(b_j + w_j^T z) = g(b_j + w_j^T z)$, which shows that if we define $\Psi : \CC^d \to \CC$
  by $\Psi (z) := c' + \sum_{j \in J_1} \theta_j \, g(b_j + w_j^T z)$
  then $\Psi \in \NN_g^d$ and $\Phi(z) = \Psi(z)$
  for all $z \in \CC^d \setminus M$ and hence almost everywhere.

  \medskip{}

  Finally, let $f \in C_c(\CC^d;\R)$ as provided by Step~2.
  For each $\Phi \in \NN_\sigma^d$, Step~3 yields ${\Psi \in \NN_g^d}$
  with $\Phi = \Psi$ almost everywhere, so that
  ${\| f - \Phi \|_{L^1(B_1(0))} = \| f - \Psi \|_{L^1(B_1(0))} \geq \eps \!>\! 0}$.
  Exactly as in the proof of \Cref{lem:PolyharmonicityObstructsUniversality},
  this also implies $C(\CC^d; \CC) \nsubseteq \overline{\NN_\sigma^d}$.
\end{proof}

Finally, we remark that \Cref{thm:ShallowUniversalApproximation} is an immediate consequence
of combining \Cref{thm:ShallowUniversalApproximationSufficient,thm:ShallowUniversalApproximationNecessary}.

\subsection{Sufficiency for deep networks}%
\label{sub:SufficiencyDeep}

In this section, we consider the case of deep complex-valued networks;
that is, complex-valued networks with more than one hidden layer.
Somewhat surprisingly, these deep networks enjoy the universal approximation property
for a strictly larger class of activation functions than for the case of shallow networks.
Before we state and prove this result rigorously, let us first provide a
precise definition of such deep complex-valued networks.

\begin{definition}\label{def:DeepComplexNetworks}
  Let $d,L \in \N$.
  We define the set $\CalW_L^d$ of all \emph{complex network weights}
  with $d$-dimensional input and $L$ hidden layers as
  \[
    \CalW_L^d
    := \bigg\{
         \big(
           (A_0, b_0),
           \dots,
           (A_L, b_L)
         \big)
         \quad \colon \quad
         \begin{array}{l}
           N_0,\dots,N_{L+1} \in \N,
           \quad
           N_0 = d,
           \quad
           N_{L+1} = 1, \\[0.1cm]
           \text{and }
           A_j \in \CC^{N_{j+1} \times N_j}, \quad
           b_j \in \CC^{N_{j+1}}
         \end{array}
       \bigg\} .
  \]
  Given network weights $\Theta = \big( (A_0,b_0),\dots,(A_L,b_L) \big) \in \CalW_L^d$
  and any function $\sigma : \CC \to \CC$, we define the associated \emph{network function}
  $\CalN_\sigma \Theta : \CC^d \to \CC$ via $\CalN_\sigma \Theta (z) = z^{(L)}$, where
  \[
    z^{(-1)} := z,
    \quad
    z^{(j)} := \sigma\bigl(b_j + A_j \, z^{(j-1)}\bigr) \quad \text{for } j \in \{ 0,\dots,L-1 \},
    \quad \text{and} \quad
    z^{(L)} := b_L + A_L \, z^{(L-1)} .
  \]
  Here, the activation function $\sigma$ is applied componentwise, meaning
  $\sigma(z) = \big( \sigma(z_1),\dots,\sigma(z_N) \big)$ for $z = (z_1,\dots,z_N) \in \CC^N$.

  Finally, we define the set of all network functions with activation function $\sigma$,
  $L$ hidden layers, and $d$-dimensional input as
  \[
    \NN_{\sigma,L}^d
    := \big\{
         \CalN_\sigma \Theta
         \,\, \colon \,\,
         \Theta \in \CalW_L^d
       \big\} .
  \]
\end{definition}

\begin{rem*}
  It is not hard to see that $\NN_{\sigma,1}^d = \NN_\sigma^d$,
  with the set $\NN_\sigma^d$ of shallow complex-valued networks
  as defined in \Cref{sub:IntroductionResults}.
\end{rem*}

Our analysis of the universal approximation property for such deep networks will crucially
depend on the following closure properties of the network classes $\NN_{\sigma,L}^d$.
The (mainly technical) proof is deferred to \Cref{sub:NetworkClosurePropertiesProof}.

\begin{lemma}\label{lem:NetworkClosureProperties}
  Let $\sigma : \CC \to \CC$ and $d, L, T \in \N$.
  Then the following hold:
  \begin{enumerate}[label=\alph*)]
    \item \label{enu:NetworksVectorSpace}
          If $\Phi, \Psi \in \NN_{\sigma,L}^d$ and $\alpha, \beta \in \CC$,
          then $\alpha \Phi + \beta \Psi \in \NN_{\sigma,L}^d$ as well.

    \item \label{enu:NetworksLowDimToHighDim}
          If $\Phi \in \NN_{\sigma,L}^1$ and $a \in \CC^d$, $b \in \CC$, then
          $\Xi : \CC^d \to \CC, z \mapsto \Phi(b + a^T z)$ satisfies
          $\Xi \in \NN_{\sigma,L}^d$.

    \item \label{enu:NetworksHighDimToLowDim}
          If $\Phi \in \NN_{\sigma,L}^d$ and $a,b \in \CC^d$, then
          $\Xi : \CC \to \CC, z \mapsto \Phi(b + z a)$ satisfies $\Xi \in \NN_{\sigma,L}^1$.

    \item \label{enu:NetworksComposition}
          If $\Phi \in \NN_{\sigma,L}^d$ and $\Psi \in \NN_{\sigma,T}^1$,
          then $\Psi \circ \Phi \in \NN_{\sigma, L+T}^d$.
  \end{enumerate}
\end{lemma}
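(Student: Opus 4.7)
The plan is to prove each of the four parts by explicit construction of weight tuples; the arguments are essentially block-matrix bookkeeping and do not use any property of $\sigma$ beyond its componentwise action. The only point where one must think at all is when merging networks at the interface between an affine (non-$\sigma$) output layer of one network and the first $\sigma$-layer of another, which matters for parts (a) and (d).

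For part (a), suppose $\Phi = \CalN_\sigma \Theta^\Phi$ and $\Psi = \CalN_\sigma \Theta^\Psi$ have layer widths $(d, N_1^\Phi, \dots, N_L^\Phi, 1)$ and $(d, N_1^\Psi, \dots, N_L^\Psi, 1)$, respectively. I would build a parallel network of widths $(d, N_1^\Phi + N_1^\Psi, \dots, N_L^\Phi + N_L^\Psi, 1)$ as follows: the first matrix $\widetilde{A}_0$ is obtained by stacking $A_0^\Phi$ on top of $A_0^\Psi$, with $\widetilde{b}_0$ the concatenation of $b_0^\Phi$ and $b_0^\Psi$; for $1 \leq j \leq L-1$, $\widetilde{A}_j$ is block diagonal with diagonal blocks $A_j^\Phi$ and $A_j^\Psi$, and $\widetilde{b}_j$ is the concatenation of $b_j^\Phi$ and $b_j^\Psi$; finally $\widetilde{A}_L = (\alpha A_L^\Phi \;\; \beta A_L^\Psi)$ and $\widetilde{b}_L = \alpha b_L^\Phi + \beta b_L^\Psi$. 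Because $\sigma$ acts componentwise, an easy induction on the layer index shows that the hidden activations of this network split as the concatenation of those of $\Phi$ and $\Psi$, so that its output equals $\alpha \Phi(z) + \beta \Psi(z)$ as required, with $L$ hidden layers.

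For parts (b) and (c), the strategy is to absorb the 1D affine map into the first weight matrix of $\Phi$ without touching any other layer. In (b), I replace $A_0^\Phi \in \CC^{N_1 \times 1}$ and $b_0^\Phi \in \CC^{N_1}$ by $A_0^\Phi \, a^T \in \CC^{N_1 \times d}$ and $b_0^\Phi + b \, A_0^\Phi$, respectively; in (c), I instead replace them by $A_0^\Phi \, a \in \CC^{N_1 \times 1}$ and $b_0^\Phi + A_0^\Phi \, b$, using the vector $a \in \CC^d$ as a column. Direct substitution into \Cref{def:DeepComplexNetworks} verifies that the resulting weight tuples realize $z \mapsto \Phi(b + a^T z)$ and $z \mapsto \Phi(b + z \, a)$, respectively, still with $L$ hidden layers.

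For part (d), the one (mild) obstacle is the depth count: if we simply stacked $\Phi$ and $\Psi$ we would formally get $L + 1 + T$ layers, so I must fuse the affine output layer of $\Phi$ with the first $\sigma$-layer of $\Psi$ into a single hidden layer. Denote the last hidden activation of $\Phi$ by $z^{(L-1)} \in \CC^{N_L^\Phi}$, so that $\Phi(z) = b_L^\Phi + A_L^\Phi z^{(L-1)}$. Substituting this into the first $\sigma$-computation of $\Psi$ gives $\sigma\bigl((b_0^\Psi + A_0^\Psi b_L^\Phi) + (A_0^\Psi A_L^\Phi) z^{(L-1)}\bigr)$, which I adopt as the $L$-th hidden layer of the combined network (with matrix $A_0^\Psi A_L^\Phi$ and bias $b_0^\Psi + A_0^\Psi b_L^\Phi$). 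Appending the remaining $T - 1$ hidden $\sigma$-layers and the affine output layer of $\Psi$ verbatim yields a network with exactly $L + T$ hidden layers that computes $\Psi \circ \Phi$, which establishes the claim.
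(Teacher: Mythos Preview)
Your proposal is correct and follows essentially the same approach as the paper: for (a) you build the parallel block-diagonal network with stacked first layer and concatenated last layer, for (b) and (c) you absorb the affine pre-composition into the first weight/bias pair, and for (d) you fuse the affine output layer of $\Phi$ with the first hidden layer of $\Psi$ via the product $A_0^\Psi A_L^\Phi$ and bias $b_0^\Psi + A_0^\Psi b_L^\Phi$. These are exactly the constructions the paper uses.
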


With these properties, we can now prove the ``sufficient part''
of \Cref{thm:DeepUniversalApproximationIntroduction}.
Precisely, we prove the following:

\begin{theorem}\label{thm:DeepNetworksSufficientCriterion}
  Let $\sigma \in \CalM$ and assume that \emph{none} of the following properties hold:
  \begin{enumerate}[label=\alph*)]
    \item we have $\sigma (z) = p(z, \overline{z})$ for almost all $z \in \CC$,
          where $p \in \CC[X,Y]$ is a complex polynomial of two variables,

    \item we have $\sigma = g$ almost everywhere or $\sigma = \overline{g}$ almost everywhere,
          where $g : \CC \to \CC$ is an entire function.
  \end{enumerate}
  Then we have $C(\CC^d; \CC) \subset \overline{\NN_{\sigma,L}^d}$ for arbitrary $d \in \N$
  and $L \in \N_{\geq 2}$.
\end{theorem}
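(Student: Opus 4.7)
The plan is to reduce the theorem to the one-dimensional case by exhibiting the ``complex ReLU'' $\varrho_\CC(z) := \max\{0,\Re z\}$ as an element of $\overline{\NN_{\sigma,L}^1}$ for every $L \geq 2$, and then to invoke \Cref{lem:FromOneDimensionToHigherDimensions} with $\CalF := \NN_{\sigma,L}^1$ and $\CalG := \NN_{\sigma,L}^d$. The hypotheses of that lemma are immediate from \Cref{lem:NetworkClosureProperties}: part~\ref{enu:NetworksVectorSpace} makes $\CalG$ a vector space, and part~\ref{enu:NetworksLowDimToHighDim} guarantees $f^{(a,b)} \in \CalG$ whenever $f \in \CalF$. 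As in \Cref{sub:SufficiencyShallow}, I would first reduce to smooth $\sigma$ by replacing $\sigma$ with suitable convolutions $\varphi \ast \sigma$ (via \Cref{lem:ConvolutionApproximation}), treating the non-smooth case as a technical wrapper at the end.

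Assume henceforth that $\sigma \in C^\infty$ and that conditions~a), b) both fail. The hypothesis that $\sigma$ is neither holomorphic nor antiholomorphic means $\del\sigma \not\equiv 0$ and $\delbar\sigma \not\equiv 0$; applying \Cref{lem:MonomialExtraction} to $V := \linspan_\CC\{\sigma_{a,b} : a,b \in \CC\} = \NN_{\sigma,1}^1$ places the monomials $z$ and $\overbar{z}$, hence $\Re z$, in $\overline{\NN_{\sigma,1}^1}$. The hypothesis that $\sigma$ is not a polynomial in $z,\overbar z$ implies that for every $m \in \N$ at least one of $\del^m\sigma, \delbar^m\sigma$ is not identically zero---indeed, if both vanished, then an iterated primitive argument using the Wirtinger product rule \Cref{eq:SpecialProductRule} would force $\sigma(z) = \sum_{k,\ell < m} c_{k\ell} z^k \overbar{z}^\ell$. \Cref{lem:MonomialExtraction} then puts either $z^m$ or $\overbar{z}^m$ in $\overline{\NN_{\sigma,1}^1}$; both restrict to $x^m$ on $\R \subset \CC$, so combining with the classical Weierstrass theorem yields that every continuous function $\psi : \R \to \R$ can be uniformly approximated on each compact interval by a single shallow network, viewed as a continuous function on all of $\CC$. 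To build $\varrho_\CC$ at depth $2$, pick $\Phi_1 \in \NN_{\sigma,1}^1$ with $\Phi_1 \approx \Re$ uniformly on a fixed compact $K \subset \CC$, let $K' \subset \CC$ be a compact neighborhood of $\Phi_1(K) \cup \Re(K)$, and construct $\Phi_2 \in \NN_{\sigma,1}^1$ approximating (on $K'$) the polynomial $\tilde q(z) = \sum c_m e_m(z)$ where $q(x) = \sum c_m x^m$ Weierstrass-approximates $\max\{0,\cdot\}$ and $e_m \in \{z^m, \overbar{z}^m\}$ is whichever monomial is approximable on all of $\CC$. Since $\Phi_2$ is continuous (smooth $\sigma$), $\Phi_2(\Phi_1(z)) \approx \Phi_2(\Re z) \approx \tilde q(\Re z) = q(\Re z) \approx \varrho_\CC(z)$ uniformly on $K$, and \Cref{lem:NetworkClosureProperties}\ref{enu:NetworksComposition} gives $\Phi_2 \circ \Phi_1 \in \NN_{\sigma,2}^1$.

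To promote $L = 2$ to general $L \geq 2$, I would use that $z \in \overline{\NN_{\sigma,1}^1}$, so the identity is uniformly approximable on compacta by shallow networks; pre-composing any $\Phi \in \NN_{\sigma,L}^1$ with such an approximate identity produces an element of $\NN_{\sigma,L+1}^1$ that converges to $\Phi$ locally uniformly, yielding $\overline{\NN_{\sigma,2}^1} \subset \overline{\NN_{\sigma,L}^1}$. The main obstacle is the smoothing reduction: unlike in the shallow case, the activation must be replaced uniformly across all hidden layers. My approach is to substitute $(\varphi \ast \sigma)(y) \approx \sum_j c_j \sigma(y - y_j)$ layer-by-layer---this substitution fits into a single $\sigma$-layer with more neurons, so the depth is preserved---and to propagate the approximation through layers using continuity of the smoothed network on compact sets, which controls intermediate outputs. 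Simultaneously, one needs to verify that $\varphi \ast \sigma$ inherits the assumption of being neither a polynomial in $z,\overbar z$ nor (anti)holomorphic; by distributional analogues of the derivative characterizations above, invoking \Cref{lem:WeaklyPolyharmonicIsPolyharmonic}-type Weyl-lemma arguments, this can be arranged by choosing countably many $\varphi_m$'s, one per required non-degeneracy, in the spirit of the ``extended network'' device used in the proof of \Cref{thm:GeneralDensityResult}.
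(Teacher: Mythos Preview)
Your proposal is correct and follows essentially the same strategy as the paper: extract $z$, $\overbar z$, and for each $m$ one of $z^m$ or $\overbar z^m$, from shallow networks (via \Cref{lem:MonomialExtraction}); compose a shallow approximation of $\Re$ with a shallow approximation of a Weierstrass polynomial to get $\varrho_\CC \in \overline{\NN_{\sigma,2}^1}$; boost depth using approximate identities; and finish with \Cref{lem:FromOneDimensionToHigherDimensions}.

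The one organizational difference worth noting concerns the non-smooth case. You frame it as ``first prove everything for smooth $\sigma$, then substitute $\varphi\ast\sigma \leadsto \sum c_j\sigma(\cdot-y_j)$ layer by layer,'' which forces you to track how approximation errors propagate through a composition whose inner layer may already be discontinuous. The paper avoids this entirely: it defines the space $W = \linspan_\CC\{\varphi\ast\sigma_{a,b}\}$ of \emph{all} smoothed versions, establishes the monomial extraction directly for $W$, and then observes once that $\overline W \subset \overline{\NN_{\sigma,1}^1}$ via \Cref{lem:ConvolutionApproximation}. Consequently, at the composition stage one already has genuine $\sigma$-networks $\Phi,\Psi \in \NN_{\sigma,1}^1$ approximating $f$ and $R$, and the only continuity needed is that of the \emph{target} function $f$ (a polynomial), not of any intermediate network. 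Your version works, but the paper's ordering is cleaner and spares you the layer-by-layer bookkeeping.
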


\begin{proof}
  \textbf{Step 1:} \emph{(There are $\varphi_1, \varphi_2 \in C_c^\infty (\CC;\R)$ satisfying
  $\del (\varphi_1 \ast \sigma) \not \equiv 0$ and ${\delbar (\varphi_2 \ast \sigma) \not \equiv 0}$):}
  Assume towards a contradiction that this is false.
  Then there is a differential operator ${\delta \in \{ \del, \delbar \}}$
  such that $\delta (\varphi \ast \sigma) \equiv 0$ for all $\varphi \in C_c^\infty(\CC;\R)$.

  Define $\eta : \CC \to \R$ via $\eta(z) := C \cdot \exp(1 / (|z|^2 - 1))$ if $|z| < 1$
  and $\eta(z) := 0$ otherwise, where $C > 0$ is chosen such that $\int_{\CC} \eta(z) \, d z = 1$.
  Then it is shown in \cite[Section~C.5]{EvansPDE} that $\eta \in C_c^\infty (\CC; \R)$.
  For $\eps > 0$ define $\eta_\eps : \CC \to \R, z \mapsto \eps^{-2} \cdot \eta(z/\eps)$
  and $\sigma_\eps := \eta_\eps \ast \sigma$.
  By assumption, we have $\delta \, \sigma_\eps \equiv 0$.
  Thanks to \Cref{eq:WirtingerLaplaceRepresentation}, this implies in particular that
  $\Delta \sigma_\eps \equiv 0$.

  Note that $\sigma$ is locally bounded and hence $\sigma \in L_{\loc}^1 (\CC ; \CC)$,
  so that \cite[Theorem~7 in Appendix~C]{EvansPDE} shows $\sigma_\eps \to \sigma$
  as $\eps \downarrow 0$, with convergence in $L^{1}_{\loc}(\CC; \CC)$.
  Using partial integration (which is justified since $\sigma_\eps \in C^\infty(\CC; \CC)$),
  we thus see for arbitrary $\varphi \in C_c^\infty(\CC; \R)$ that
  \begin{equation}
    \int_{\CC}
      \sigma (z) \, \Delta \varphi(z)
    \, d z
    = \lim_{\eps \downarrow 0}
      \int_{\CC}
        \sigma_\eps (z) \, \Delta \varphi(z)
      \, d z
    = \lim_{\eps \downarrow 0}
      \int_{\CC}
        \varphi(z) \, \Delta \sigma_\eps (z)
      \, d z
    = 0 \, ;
    \label{eq:DeepNetworksWeakHarmonicity}
  \end{equation}
  that is, $\sigma$ is weakly harmonic.
  Thanks to \Cref{lem:WeaklyPolyharmonicIsPolyharmonic}, this implies $\sigma = g$ almost everywhere
  for a suitable function $g \in C^\infty (\CC; \CC)$.
  Recalling that $\delta \in \{ \del, \delbar \}$ and $\delta \, \sigma_\eps \equiv 0$,
  we see for arbitrary $\varphi \in C_c^\infty (\CC; \R)$ via partial integration that
  \begin{equation}
    \begin{split}
      \int_{\CC}
        \varphi(z) \delta g (z)
      \, d z
      & = - \int_{\CC}
              g (z) \, \delta \, \varphi (z)
            \, d z
        = - \int_{\CC}
              \sigma (z) \, \delta \, \varphi (z)
            \, d z \\
      & = - \lim_{\eps \downarrow 0}
            \int_{\CC}
              \sigma_\eps (z) \, \delta \varphi (z)
            \, d z
        = \lim_{\eps \downarrow 0}
          \int_{\CC}
            \varphi(z) \, \delta \, \sigma_\eps (z)
          \, d z
        = 0 .
    \end{split}
    \label{eq:DeepNetworksWeakDifferentialOperator}
  \end{equation}
  By the fundamental lemma of the calculus of variations (see for instance \cite[Lemma~4.22]{AltFA}),
  this implies $\delta \, g = 0$ almost everywhere, and then everywhere since $\delta \, g$
  is continuous.
  In case of $\delta = \delbar$ this implies that $g$ is holomorphic;
  see \Cref{sec:WirtingerCalculus}.
  Since $\sigma = g$ almost everywhere, $\sigma$ then coincides almost everywhere with a
  holomorphic function, in contradiction to our assumptions.
  If otherwise $\delta = \del$, then $\delbar \overline{g} = \overline{\del g} \equiv 0$,
  so that $h = \overline{g}$ is holomorphic and $\sigma = g = \overline{h}$ almost everywhere,
  again in contradiction to our assumptions.

  \medskip{}

  \noindent
  \textbf{Step 2:} \emph{(For each $m \in \N_0$, there is $\psi_m \in C_c^\infty(\CC; \R)$
  with $\del^m (\psi_m \ast \sigma) \not\equiv 0$ or ${\delbar^m (\psi_m \ast \sigma) \not \equiv 0}$):}
  Assume towards a contradiction that this is false, meaning there is $m \in \N_0$ such that
  for each $\psi \in C_c^\infty(\CC; \R)$ we have $\del^m (\psi \ast \sigma) \equiv 0$
  \emph{and} $\delbar^m (\psi \ast \sigma) \equiv 0$.
  Let $\eta, \eta_\eps$, and $\sigma_\eps$ as defined in Step~1 and note
  $\del^m \sigma_\eps \equiv 0 \equiv \delbar^m \sigma_\eps$.
  Thanks to \Cref{eq:WirtingerLaplaceRepresentation} this implies
  $\Delta^m \sigma_\eps = 4^m \del^m \delbar^m \sigma_\eps \equiv 0$.

  As in \Cref{eq:DeepNetworksWeakHarmonicity}, we thus see
  $\int_{\CC} \sigma(z) \Delta^m \varphi (z) \, d z = 0$ for all $\varphi \in C_c^\infty(\CC;\R)$,
  meaning that $\sigma$ is weakly polyharmonic of order $m$.
  Thanks to \Cref{lem:WeaklyPolyharmonicIsPolyharmonic}, this implies that there exists
  $g \in C^\infty(\CC; \R)$ satisfying $\sigma = g$ almost everywhere.
  Since $\del^m \sigma_\eps \equiv 0 \equiv \delbar^m \sigma_\eps$ for all $\eps > 0$,
  we see using similar arguments as in \Cref{eq:DeepNetworksWeakDifferentialOperator}
  that $\del^m g \equiv 0 \equiv \delbar^m g$.
  The condition $\delbar^m g \equiv 0$ means that $g$ is \emph{polyanalytic of order $m$};
  see \cite[Equation~(1.3)]{BalkPolyanalyticFunctions}.
  As shown in \cite[Bottom of Page 10]{BalkPolyanalyticFunctions}, this implies that we can write
  $g(z) = \sum_{k=0}^{m-1} \overline{z}^k \, a_k (z)$ for all $z \in \CC$,
  with holomorphic functions $a_k : \CC \to \CC$.

  \smallskip{}

  Denoting by $a_k^{(m)}$ the \emph{complex} derivative of $a_k$ of order $m$,
  we claim that $a_k^{(m)} \equiv 0$ for all ${k \in \{ 0,\dots,m-1 \}}$.
  Assume towards a contradiction that this is not so, and let ${K \in \{ 0,\dots,m-1 \}}$
  be maximal with $a_K^{(m)} \not\equiv 0$.
  Since $\delbar f \equiv 0$ and $\del f = f'$ for each holomorphic function $f$
  (with $f'$ denoting the complex derivative of $f$), we have
  $\delbar a_k(z) \equiv 0$ and $\del \bar{z}^k = \overline{\delbar z^k} \equiv 0$
  as well as $\del^m a_k (z) = a_k^{(m)} (z)$ and $\delbar^K \bar{z}^k = \overline{\del^K z^k} = 0$
  for $k < K$, while $\delbar^K \bar{z}^K = \overline{\del^K z^K} = K!$.
  Combining this with the product rule for Wirtinger derivatives (see \Cref{sec:WirtingerCalculus}
  and in particular \Cref{eq:SpecialProductRule}) and recalling the maximality of $K$,
  we see because of $\del^m g \equiv 0$ that
  \[
    0
    = \delbar^K \del^m g(z)
    = \sum_{k=0}^{m-1}
        \big[
          (\delbar^K \bar{z}^k) \cdot a_k^{(m)}(z)
        \big]
    = \sum_{k=0}^{K}
        \big[
          (\delbar^K \bar{z}^k) \cdot a_k^{(m)}(z)
        \big]
    = K! \cdot a_K^{(m)} (z) ,
  \]
  in contradiction to $a_K^{(m)} \not\equiv 0$.

  Overall, we see that each $a_k$ is holomorphic with $a_k^{(m)} \equiv 0$.
  Thus, $a_k (z) = \sum_{j=0}^{m-1} a_{k,j} \, z^j$ is a polynomial of degree at most $m - 1$,
  and hence $g(z) = \sum_{k,j=0}^{m-1} a_{k,j} \bar{z}^k z^j$
  for suitable coefficients $a_{k,j} \in \CC$.
  Since $\sigma = g$ almost everywhere, this means that $\sigma$ coincides almost everywhere
  with a polynomial in $z$ and $\bar{z}$, contradicting our assumptions.

  \medskip{}

  \noindent
  \textbf{Step 3:} With $\sigma_{a,b}$ as in \Cref{eq:TranslationDilation}, let us define
  \[
    W := \linspan_{\CC}
         \big\{
           \varphi \ast \sigma_{a,b}
           \,\, \colon \,\,
           a,b \in \CC \text{ and } \varphi \in C_c^\infty(\CC; \R)
         \big\}
      \subset C^\infty (\CC; \CC).
  \]
  Furthermore, define $\zeta_m : \CC \to \CC, z \mapsto z^m$ for $m \in \N_0$
  and $R : \CC \to \CC, z \mapsto \Re z$, as well as $\id_{\CC} : \CC \to \CC, z \mapsto z$.
  In this step we show that $W$ is closed under dilations and translations and,
  denoting the closure of $W$ with respect to locally uniform convergence by $\overline{W}$, that
  \begin{equation}
    \id_{\CC}, R \in \overline{W}
    \qquad \text{and} \qquad
    \forall \, m \in \N_0 : \quad
      \zeta_m \in \overline{W} \text{ or } \overline{\zeta_m} \in \overline{W} .
    \label{eq:DeepNetworksWRichness}
  \end{equation}

  To see this, first note that our assumptions on $\sigma$ imply $\sigma \not\equiv 0$;
  choosing $b \in \CC$ with $\sigma(b) \neq 0$, we thus see $\sigma_{0,b} \equiv \sigma(b) \neq 0$
  and hence (with $\eta$ as in Step~1) $W \ni \eta \ast \sigma_{0,b} \equiv \sigma(b) \neq 0$ as well.
  In other words, $W$ contains the constant functions.
  To see that $W$ is closed under translations and dilations, let $\varphi \in C_c^\infty(\CC;\R)$
  and $a,b, \alpha,\beta \in \CC$ be arbitrary.
  In case of $\alpha = 0$, the function $z \mapsto (\varphi \ast \sigma_{a,b})(\beta + \alpha z)$
  is a constant function and thus belongs to $W$, as we just saw.
  Otherwise if $\alpha \neq 0$, then the calculation in \Cref{eq:ConvolutionTransformation}
  shows for $\varphi_\alpha (z) := \varphi (\alpha z)$ that
  \(
    (\varphi \ast \sigma_{a,b}) (\alpha z + \beta)
    = |\alpha|^2 \cdot
      \bigl(
        \varphi_\alpha \ast (\sigma_{a,b})_{\alpha,\beta}
      \bigr) (z) .
  \)
  Since $\varphi_\alpha \in C_c^\infty (\CC; \R)$ and
  $(\sigma_{a,b})_{\alpha,\beta} = \sigma_{a \alpha, b + a \beta}$,
  this implies $\big( z \mapsto  (\varphi \ast \sigma_{a,b}) (\alpha z + \beta)\big) \in W$,
  which easily shows that $W$ is closed under dilations and translations.

  To prove \Cref{eq:DeepNetworksWRichness}, first recall from Step~1 that
  $\del (\varphi_1 \ast \sigma)(\theta_1) \neq 0 \neq \delbar (\varphi_2 \ast \sigma) (\theta_2)$
  for certain $\theta_1, \theta_2 \in \CC$.
  Since $\varphi_1 \ast \sigma, \varphi_2 \ast \sigma \in W$, an easy application of
  \Cref{lem:MonomialExtraction} shows that $\id_{\CC}, \overline{\id_{\CC}} \in \overline{W}$
  and hence $R = \frac{1}{2} (\id_{\CC} + \overline{\id_{\CC}}) \in \overline{W}$.
  Similarly, with $\psi_m$ as in Step~2, we can choose for each $m \in \N_0$ some $\lambda_m \in \CC$
  such that $[\del^m (\psi_m \ast \sigma)](\lambda_m) \neq 0$
  or $\bigl[\delbar^m (\psi_m \ast \sigma)\bigr](\lambda_m) \neq 0$.
  In the first case, \Cref{lem:MonomialExtraction} shows $\zeta_m \in \overline{W}$,
  while in the second case $\overline{\zeta_m} \in \overline{W}$.

  \medskip{}

  \noindent
  \textbf{Step 4:} \emph{(Showing that $\varrho_{\CC} \in \overline{\NN_{\sigma,2}^1}$
  with $\varrho_\CC$ as in \Cref{lem:FromOneDimensionToHigherDimensions}):}
  Let $\eps,r > 0$; we want to construct $\Gamma \in \NN_{\sigma,2}^1$ satisfying
  $\| \varrho_{\CC} - \Gamma \|_{L^\infty (\overline{B_r}(0))} \leq \eps$.

  First of all, note that \Cref{lem:ConvolutionApproximation} shows for arbitrary
  $\varphi \in C_c^\infty(\CC; \R)$ and $a,b \in \CC$ because of $\sigma_{a,b} \in \CalM$ that
  \(
    \varphi \ast \sigma_{a,b}
    \subset \overline{
              \linspan_{\CC}
              \{
                T_z \sigma_{a,b}
                \colon
                z \in \CC
              \}
            }
    \subset \overline{\NN_{\sigma,1}^1} .
  \)
  By definition of $W$, this easily implies $\overline{W} \subset \overline{\NN_{\sigma,1}^1}$.
  Next, since $\varrho : \R \to \R, x \mapsto \max \{ 0, x \}$ is continuous,
  the classical Stone-Weierstraß theorem
  (see \cite[Theorem~7.26]{RudinPrinciplesOfMathematicalAnalysis}) shows that there is a polynomial
  ${p(x) = \sum_{n=0}^N a_n \, x^n}$ (with $N \in \N$ and $a_0,\dots,a_N \in \R$) satisfying
  $|\varrho(x) - p(x)| \leq \frac{\eps}{3}$ for all $x \in [-r, r]$.

  For each $m \in \N_0$ let us choose $\theta_m \in \{ \zeta_m, \overline{\zeta_m} \}$
  with $\theta_m \in \overline{W} \subset \overline{\NN_{\sigma,1}^1}$; this is possible
  by \Cref{eq:DeepNetworksWRichness}.
  Since $\NN_{\sigma,1}^1$ is a vector space, we thus see
  $f := \sum_{n=0}^N a_n \theta_n \in \overline{\NN_{\sigma,1}^1}$;
  hence, there exists $\Phi \in \NN_{\sigma,1}^1$ satisfying
  $\| \Phi - f \|_{L^\infty(\overline{B_{r+1}}(0))} \leq \frac{\eps}{3}$.
  Since $f$ is continuous, it is uniformly continuous on $\overline{B_{r+1}}(0)$;
  we can thus choose $\delta \in (0,1)$ such that $|f(z) - f(w)| \leq \frac{\eps}{3}$
  for all $z,w \in \overline{B_{r+1}}(0)$ with $|z - w| \leq \delta$.
  Note furthermore that $\zeta_m (z) = \overline{\zeta_m (z)} = z^m$ for $z \in \R$,
  and hence $f(z) = p(z)$ for all $z \in \R$.

  Finally, since $R \in \overline{W} \subset \overline{\NN_{\sigma,1}^1}$
  by \Cref{eq:DeepNetworksWRichness}, we can choose $\Psi \in \NN_{\sigma,1}^1$ satisfying
  $|R(z) - \Psi(z)| \leq \delta \leq 1$ for all $z \in \overline{B_r}(0)$.
  Because of $|R(z)| = |\Re z| \leq |z| \leq r$, this implies $\Psi(z) \in \overline{B_{r+1}}(0)$
  for $z \in \overline{B_r}(0)$.
  Summarizing all the obtained estimates, we finally see for $z \in \overline{B_r}(0)$ that
  \[
    \bigl|\varrho_{\CC}(z) - (\Phi \circ \Psi)(z)\bigr|
    \leq \bigl|\varrho(R (z)) - f(R(z))\bigr|
         + \big| f(R(z)) - f(\Psi(z)) \big|
         + \big| f(\Psi(z)) - \Phi(\Psi(z)) \big|
    \leq \eps .
  \]
  Since $\Gamma := \Phi \circ \Psi \in \NN_{\sigma,2}^1$
  by \Cref{lem:NetworkClosureProperties} \ref{enu:NetworksComposition},
  this implies $\varrho_{\CC} \in \overline{\NN_{\sigma,2}^1}$, as claimed.

  \medskip{}

  \noindent
  \textbf{Step 5:} \emph{(Completing the proof):}
  Let $L \in \N_{\geq 2}$ and $d \in \N$.
  With $f^{(a,b)}$ as defined in \Cref{lem:FromOneDimensionToHigherDimensions},
  \Cref{lem:NetworkClosureProperties} shows that
  $\CalG := \NN_{\sigma,L}^d \subset \{ g : \CC^d \to \CC \}$
  is a (complex) vector space that satisfies $f^{(a,b)} \in \NN_{\sigma,L}^d$ for all
  $f \in \CalF := \NN_{\sigma,L}^1$, $b \in \CC$ and $a \in \CC^d$.
  Therefore, \Cref{lem:FromOneDimensionToHigherDimensions} shows that it is enough to prove
  $\varrho_{\CC} \in \overline{\NN_{\sigma,L}^1}$.
  In case of $L = 2$, this was shown in Step~4; thus, let us assume $L > 2$.

  Since \Cref{eq:DeepNetworksWRichness} shows
  $\id_{\CC} \in \overline{W} \subset \overline{\NN_{\sigma,1}^1}$,
  there is a sequence ${(\Phi_n)_{n \in \N} \subset \NN_{\sigma,1}^1}$
  satisfying $\Phi_n \to \id_{\CC}$ locally uniformly on $\CC$.
  Furthermore, by Step~4 we can choose a sequence $(\Psi_n)_{n \in \N} \subset \NN_{\sigma,2}^1$
  satisfying $\Psi_n \to \varrho_{\CC}$ locally uniformly.
  Since the limit functions are continuous, this easily implies
  \(
    \Phi_n \circ \cdots \circ \Phi_n \circ \Psi_n
    \to \id_{\CC} \circ \cdots \circ \id_{\CC} \circ \varrho_{\CC}
    = \varrho_{\CC}
  \)
  with locally uniform convergence, where $\Phi_n$ appears $L-2$ times in the composition.
  A formal argument for the claimed convergence is given in
  \cite[Lemma~A.7]{GribonvalNNApproximationSpaces}.
  Since $\Phi_n \circ \cdots \circ \Phi_n \circ \Psi_n \in \NN_{\sigma,L}^1$ by
  \Cref{lem:NetworkClosureProperties} \ref{enu:NetworksComposition}, we are done.
\end{proof}

\subsection{Necessity for deep networks}%
\label{sub:NecessityDeep}

The following result contains the ``necessary'' part
of \Cref{thm:DeepUniversalApproximationIntroduction};
precisely, it shows that those activation functions $\sigma$
that are excluded in the sufficient criterion of \Cref{thm:DeepNetworksSufficientCriterion}
indeed do \emph{not} yield universal network classes,
at least if one assumes a priori that $\sigma$ is continuous.
As we will see in \Cref{exa:PathologicalUniversalSigma} below,
this continuity assumption can not simply be dropped,
and is thus not just a proof artifact.

\begin{theorem}\label{thm:DeepNetworksNecessaryCriterion}
  Let $\sigma : \CC \to \CC$ be continuous and $d,L \in \N$.
  Assume that (at least) one of the following properties is satisfied:
  \begin{enumerate}[label=\alph*)]
    \item we have $\sigma (z) = p(z, \overline{z})$ for almost all $z \in \CC$,
          where $p \in \CC[X,Y]$ is a complex polynomial of two variables,

    \item we have $\sigma = g$ almost everywhere or $\sigma = \overline{g}$ almost everywhere,
          where $g : \CC \to \CC$ is an entire function.
  \end{enumerate}
  Then there exist $f \in C_c(\CC^d;\R)$ and $\eps > 0$ with $\| f - \Phi \|_{L^1(B_1(0))} \geq \eps$
  for all $\Phi \in \NN_{\sigma,L}^d$.
  In particular, $C(\CC^d; \CC) \nsubseteq \overline{\NN_{\sigma,L}^d}$.
\end{theorem}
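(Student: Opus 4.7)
The first observation is that continuity of $\sigma$ lets us remove the ``almost everywhere'' qualifier. Indeed, in both hypotheses $\sigma$ agrees a.e.\ with a continuous function (a polynomial in $(z,\overline{z})$, an entire function $g$, or its conjugate $\overline{g}$), and two continuous functions that agree a.e.\ are equal everywhere. So $\sigma$ itself is smooth and falls into one of three types: (i) a polynomial in $(z,\overline{z})$, (ii) entire (holomorphic), or (iii) antiholomorphic.

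The strategy is to reduce to the shallow obstruction lemma. By \Cref{lem:NetworkClosureProperties}\ref{enu:NetworksHighDimToLowDim}, for any $w_0 \in \CC^{d-1}$ and any $\Phi \in \NN_{\sigma,L}^d$ the restriction $z \mapsto \Phi(z, w_0)$ lies in $\NN_{\sigma,L}^1$ (take $a = (1,0,\dots,0) \in \CC^d$ and $b = (0,w_0) \in \CC^d$ in that lemma). So it suffices to prove a single key claim:

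\emph{Key claim.} There exists $m = m(\sigma, L) \in \N$ such that every $\Phi \in \NN_{\sigma,L}^1$ is smooth and satisfies $\Delta^m \Phi \equiv 0$.

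Once the claim is established, we set $\CalF := \NN_{\sigma,L}^d$ and apply \Cref{lem:PolyharmonicityObstructsUniversality} with this $m$; it directly produces the desired $f \in C_c(\CC^d; \R)$ and $\eps > 0$, and also yields the non-density conclusion $C(\CC^d;\CC) \nsubseteq \overline{\NN_{\sigma,L}^d}$.

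To prove the key claim I would propagate the relevant structure layer by layer. In case (ii), since $\sigma$ is holomorphic and the affine maps $z \mapsto A_j \, z^{(j-1)} + b_j$ are holomorphic (affine with complex coefficients), the Wirtinger chain rule \Cref{eq:ChainRule} gives that $\delbar$ of every intermediate function vanishes; hence $\Phi$ is holomorphic and $\Delta \Phi = 4\, \del \delbar \Phi \equiv 0$, so $m = 1$ works. In case (iii), $\sigma = \overline{g}$ satisfies $\del \sigma \equiv 0$; the same chain rule shows that composing with $\sigma$ flips between holomorphic and antiholomorphic at each hidden layer, so $\Phi$ is holomorphic or antiholomorphic depending on the parity of $L$, and again $\Delta \Phi \equiv 0$ with $m = 1$. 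Case (i) is the only one requiring a small amount of bookkeeping: writing $\deg p = N$, one shows by induction on the layer index $k$ that every entry of the $k$-th hidden layer is a polynomial in $(z,\overline z)$ of total degree at most $N^k$ (complex-linear combinations preserve the polynomial class and substituting a polynomial of degree $D$ into $p$ yields a polynomial of degree at most $N D$). The final affine layer does not raise the degree, so $\Phi(z) = \sum_{j+l \leq N^L} c_{j,l}\, z^j \overline{z}^l$. A direct computation using $\del(z^j \overline{z}^l) = j\, z^{j-1}\overline{z}^l$ and $\delbar(z^j\overline{z}^l) = l\, z^j \overline{z}^{l-1}$ shows that $\Delta^m = 4^m \del^m \delbar^m$ annihilates every such monomial once $m > N^L / 2$, so $m := \lfloor N^L / 2 \rfloor + 1$ suffices.

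The main (mild) obstacle is the degree bookkeeping in case (i); the other cases reduce to a one-line chain-rule argument. All three cases then funnel into \Cref{lem:PolyharmonicityObstructsUniversality} to deliver both the quantitative $L^1$ lower bound and the failure of universal approximation.
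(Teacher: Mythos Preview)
Your proposal is correct and follows essentially the same route as the paper: upgrade the a.e.\ hypotheses to pointwise equalities via continuity, reduce to input dimension $1$ via \Cref{lem:NetworkClosureProperties}\ref{enu:NetworksHighDimToLowDim}, establish the layerwise key claim $\Delta^m \Phi \equiv 0$ (polynomial degree bookkeeping in case~(i), holomorphic/antiholomorphic alternation in cases~(ii)--(iii)), and conclude via \Cref{lem:PolyharmonicityObstructsUniversality}. The only cosmetic difference is that you take $m = \lfloor N^L/2\rfloor + 1$ in the polynomial case, which is a slightly sharper constant than the paper's $m = N^L + 1$; the argument is otherwise identical.
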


\begin{proof}
  Since $\sigma$ is continuous, the identities $\sigma(z) = p(z, \bar{z})$ or $\sigma(z) = g(z)$
  or $\sigma(z) = \overline{g(z)}$ hold everywhere, not only almost everywhere.
  In particular, $\sigma \in C^\infty(\CC; \CC)$, which easily implies
  $\NN_{\sigma,L}^1 \subset C^\infty (\CC; \CC)$ as well.
  We will show below that there is some $m = m(\sigma,L) \in \N$ such that
  \begin{equation}
    \forall \, \Psi \in \NN_{\sigma,L}^1 : \quad \Delta^m \Psi \equiv 0.
    \label{eq:DeepNetworksNecessityPolyharmonicity}
  \end{equation}
  Once this is shown, \Cref{lem:NetworkClosureProperties} \ref{enu:NetworksHighDimToLowDim}
  shows for every $\Phi \in \NN_{\sigma,L}^d$ and arbitrary $w \in \CC^{d-1}$ that
  if we set $e_1 = (1,0,\dots,0) \in \CC^d$ and
  $\Psi : \CC \to \CC, z \mapsto \Phi(z,w) = \Phi \big( (0,w) + z \, e_1 \big)$,
  then $\Psi \in \NN_{\sigma,L}^1$ and hence $\Psi \in C^\infty(\CC;\CC)$
  and $\Delta^m \Psi \equiv 0$.
  Then, a straightforward application of \Cref{lem:PolyharmonicityObstructsUniversality}
  yields the claim of the theorem.

  The (somewhat technical) proof of \Cref{eq:DeepNetworksNecessityPolyharmonicity} is deferred
  to the appendix.
  The intuition is that if $\sigma$ is holomorphic or antiholomorphic,
  then each $\Psi \in \NN_{\sigma,L}^1$ will be holomorphic or antiholomorphic,
  so that $\Delta \Psi \equiv 0$.
  Likewise, if $\sigma(z) = p(z, \overline{z})$ is a polynomial, then each
  $\Psi \in \NN_{\sigma,L}^1$ will be a polynomial in $z,\bar{z}$ of fixed degree $N = N(p,L)$,
  and hence $\Delta^{N+1} \Psi \equiv 0$.
  A more formal argument is given in \Cref{sub:AppendixDeepNetworksNecessityPolyharmonicity}.
\end{proof}

The following example presents an activation function $\sigma \in \CalM$
that coincides almost everywhere with a polynomial, but for which
$\NN_{\sigma,L}^d$ is nevertheless universal for any $L \geq 2$.

\begin{example}\label{exa:PathologicalUniversalSigma}
  Let $\varrho_{\CC} : \CC \to \CC, z \mapsto \max \{ 0, \Re z \}$
  as in \Cref{lem:FromOneDimensionToHigherDimensions}.
  Define
  \[
    \sigma : \quad
    \CC \to \CC, \quad
    z \mapsto \begin{cases}
                \Re z,            & \text{if } z \notin \R , \\
                \varrho_{\CC}(z), & \text{if } z \in \R .
              \end{cases}
  \]
  Clearly, $\sigma$ is locally bounded.
  Moreover, we have $N := \{ z \in \CC \colon \sigma \text{ not continuous at } z \} \subset \R$,
  so that the closure $\overline{N} \subset \R$ is a null-set when considered as a subset of $\CC$;
  therefore, $\sigma \in \CalM$.
  Next, note that $\sigma(z) = \Re z = z + \bar{z}$ for almost all $z \in \CC$
  (namely, for all $z \in \CC \setminus \R$),
  so that $\sigma$ satisfies the first condition of \Cref{thm:DeepNetworksNecessaryCriterion}.

  Nevertheless, it is straightforward to verify that
  $\sigma \circ \cdots \circ \sigma = \varrho_{\CC}$,
  whenever $\sigma$ appears at least twice in the composition.
  In view of \Cref{lem:NetworkClosureProperties} \ref{enu:NetworksComposition},
  we thus see $\varrho_{\CC} \in \NN_{\sigma,L}^1$ whenever $L \in \N_{\geq 2}$.
  Via a straightforward application of \Cref{lem:FromOneDimensionToHigherDimensions},
  this implies $C(\CC^d; \CC) \subset \overline{\NN_{\sigma,L}^d}$
  for all $d \in \N$ and $L \in \N_{\geq 2}$.

  This example shows that in the necessary condition for the universality of deeper networks
  (\Cref{thm:DeepNetworksNecessaryCriterion}), one cannot simply relax the continuity assumption
  by instead assuming that $\sigma \in \CalM$.
\end{example}

\appendix

\section{Postponed proofs}%
\label{sec:Appendix}

\subsection{Proof of Proposition~\ref{prop:IntroHolomorphicIsBad}}%
\label{sub:ProofOfHolomorphicIsBad}

\begin{proof}
  Let $f \in C_c(\CC; \CC)$ be arbitrary such that $f|_{B_1(0)}$ is not smooth;
  for instance, take ${f(z) = \max \bigl\{ 0, 1 - |z| \bigr\}}$.
  Assume towards a contradiction that the claim of the proposition fails;
  there is thus a sequence $(\Theta_n)_{n \in \N}$ of admissible weights such that
  if we set $D_n := D_{\Theta_n}$ and $\Phi_n := \Phi_{\Theta_n}$, then
  $\sup_{z \in B_1(0) \setminus D_{n}} |f(z) - \Phi_n(z)| \leq n^{-1} < \infty$
  for all $n \in \N$.

  Note that $D_n \subset \CC$ is closed and discrete, so that $B_1(0) \cap D_n$ is finite.
  Furthermore, ${\Phi_n : \CC \setminus D_n \to \CC}$ is holomorphic, meaning that
  each $z \in D_n$ is an isolated singularity of $\Phi_n$.
  Because of ${|\Phi_n(z)| \leq 1 + \| f \|_{L^\infty}}$ for all $z \in B_1(0) \setminus D_n$,
  we see that $\Phi_n$ is bounded on a neighborhood of each $z \in D_n \cap B_1(0)$.
  By elementary complex analysis (see for instance \cite[Chapter~3, Theorem~3.1]{SteinComplexAnalysis})
  this implies that $\Phi_n|_{B_1(0)}$ can be extended to a holomorphic function
  $\Psi_n : B_1(0) \to \CC$.
  By continuity (and since $B_1(0) \setminus D_n$ is dense in $B_1(0)$), we thus see
  \[
    \sup_{z \in B_1(0)}
      |f(z) - \Psi_n(z)|
    = \sup_{z \in B_1(0) \setminus D_n}
        |f(z) - \Psi_n(z)|
    = \sup_{z \in B_1(0) \setminus D_n}
        |f(z) - \Phi_n(z)|
    \leq n^{-1} ,
  \]
  so that $\Psi_n$ converges to $f$ uniformly on $B_1(0)$.
  Again by elementary complex analysis
  (see \cite[Chapter~2, Theorem~5.2 and Corollary~4.2]{SteinComplexAnalysis}),
  this implies that $f|_{B_1(0)}$ is holomorphic and hence smooth,
  contradicting our choice of $f$.
\end{proof}

\subsection{A technical result used to prove Lemma~\ref{lem:MonomialExtraction}}%
\label{sub:DerivativesViaDividedDifferences}

\begin{proposition}\label{prop:DerivativesInTheSpace}
  Let $V \subset C^\infty (\CC; \CC)$ be a (complex) vector space that is closed
  under dilations and translations.
  Then for each $\varphi \in V$, $\theta \in \CC$, and $n,m \in \N_0$, we have
  \[
    \Big(
      z \mapsto \frac{\partial^{n+m}}{\partial a^n \, \partial b^m} \Big|_{a = b = 0} \,
                  \varphi\bigl( (a + i b) \, z + \theta\bigr)
    \Big)
    \in \overline{V},
  \]
  with $\overline{V}$ as defined in \Cref{eq:ClosureDefinition}.
\end{proposition}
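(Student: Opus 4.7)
The plan is to approximate the mixed partial derivative by an iterated finite difference of translated-and-dilated copies of $\varphi$, all of which will lie in $V$ by hypothesis.

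First, I would observe that the closure of $V$ under dilations and translations immediately gives us plenty of ammunition: for every $p,q \in \R$, the function
\[
\psi_{p,q} : \CC \to \CC, \quad z \mapsto \varphi\bigl((p+iq)z + \theta\bigr)
\]
is of the form $\varphi_{p+iq,\theta}$ in the notation of \Cref{eq:TranslationDilation}, hence $\psi_{p,q} \in V$. Since $V$ is a complex vector space, arbitrary finite linear combinations of such $\psi_{p,q}$ also lie in $V$.

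Next I would introduce, for small parameters $h,k > 0$, the iterated finite-difference approximation
\[
F_{h,k}(z) := \frac{1}{h^{n} k^{m}}
\sum_{p=0}^{n} \sum_{q=0}^{m}
(-1)^{n-p+m-q} \binom{n}{p} \binom{m}{q}\,
\psi_{ph,\,qk}(z),
\]
which is a finite linear combination of functions in $V$, hence $F_{h,k} \in V$. The target function is obtained as the limit $F_{h,k} \to \frac{\partial^{n+m}}{\partial a^{n}\partial b^{m}}\big|_{a=b=0}\varphi((a+ib)z+\theta)$ as $h,k \downarrow 0$, and the whole point is to show that this convergence is locally uniform in $z$.

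For that convergence, I would invoke the standard one-variable fact that for any $g \in C^{n+1}(\R;\CC)$ one has
\[
\frac{1}{h^{n}} \sum_{p=0}^{n} (-1)^{n-p} \binom{n}{p} g(ph)
= g^{(n)}(0) + O(h),
\]
with error controlled by $\sup_{[0,nh]} |g^{(n+1)}|$ (a Taylor-remainder calculation). Apply this first to the smooth function $a \mapsto G(a,b,z) := \varphi((a+ib)z+\theta)$ for each fixed $b,z$, and then to the smooth function $b \mapsto \partial_{a}^{n} G(0,b,z)$ (and to the error term), noting that all relevant partial derivatives of $G$ are continuous functions of $(a,b,z)$ on $\R^{2}\times\CC$ and therefore bounded uniformly when $(a,b)$ is confined to a neighborhood of $(0,0)$ and $z$ to a compact set. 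Iterating the two approximations yields
\[
\sup_{z \in K} \Bigl|F_{h,k}(z) - \tfrac{\partial^{n+m}}{\partial a^{n}\partial b^{m}}\big|_{a=b=0}\varphi((a+ib)z+\theta)\Bigr| \longrightarrow 0
\]
as $h,k \downarrow 0$, for every compact $K \subset \CC$, which places the limit function in $\overline{V}$.

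The only real obstacle is bookkeeping the uniform control of the finite-difference error: one must check that the constants implicit in the two $O(h)$ and $O(k)$ remainders depend only on suprema of derivatives of $G$ over a fixed compact set in $(a,b,z)$, and hence can be made independent of $z \in K$ when $h,k$ are small. Once this uniformity is in place, the result is immediate.
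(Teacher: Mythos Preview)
Your approach is correct and takes a genuinely different route from the paper's. The paper argues by induction on the order of differentiation: it first shows (Step~1) that a \emph{single} first-order difference quotient of $w \mapsto \gamma(w,z)$ converges locally uniformly in $z$ to $(\partial_j \gamma_z)(w_0)$, placing this function in the closure of $W_\gamma := \linspan_\CC\{\gamma(w,\cdot) : w \in \CC\}$; then (Step~2) it iterates, applying the same first-order argument to $\widetilde\gamma(w,z) := (\partial^\alpha \gamma_z)(w)$ and using the inclusion $W_{\widetilde\gamma} \subset \overline{W_\gamma}$ to remain inside $\overline{W_\gamma}$. No higher-order difference operator ever appears, and at each stage only uniform continuity of one derivative is needed.

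Your direct construction via the iterated forward difference $F_{h,k}$ is more explicit but pays for this in the error analysis. The delicate point is precisely the cross term you flag parenthetically: after writing $D_h^n[G(\cdot,b,z)] = \partial_a^n G(0,b,z) + E_1(h,b,z)$ with $E_1 = O(h)$, one must control $D_k^m[E_1(h,\cdot,z)]$, and the naive bound $\lvert D_k^m[E_1]\rvert \lesssim h/k^m$ is useless for the joint limit. What rescues it is that $E_1$ is smooth in $b$ with $\partial_b^m E_1 = O(h)$ uniformly (since $\partial_b^m$ commutes with both $D_h^n$ and $\partial_a^n$), so the mean-value representation of $D_k^m$ yields $D_k^m[E_1] = O(h)$. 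Equivalently, the integral identity
\[
  F_{h,k}(z)
  = \int_{[0,1]^{n}}\int_{[0,1]^{m}}
      \partial_a^n \partial_b^m G\Bigl(h\textstyle\sum_i s_i,\; k\sum_j t_j,\; z\Bigr)
    \, ds\, dt
\]
makes the locally uniform joint convergence immediate. Either way your plan goes through; just be aware that the phrase ``and to the error term'' is carrying a genuine, if routine, argument that the paper's inductive scheme sidesteps entirely.
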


\begin{proof}
  \textbf{Step~1:} Let $\gamma \in C^\infty(\CC^2; \CC)$ and define
  $W_\gamma := \linspan_{\CC} \{ \gamma(w, \bullet) \colon w \in \CC \} \subset \{ f : \CC \to \CC \}$.
  Given $z \in \CC$, define $\gamma_z \in C^\infty (\CC; \CC)$ by $\gamma_z(w) = \gamma(w,z)$.
  In this step we show that
  \begin{equation}
    \big(
      z \mapsto (\partial_j \gamma_z) (w_0)
    \big)
    \in \overline{W_\gamma}
    \qquad \forall \, j \in \{ 1,2 \} \text{ and } w_0 \in \CC,
    \label{eq:DerivativeInSpaceAuxiliary1}
  \end{equation}
  where the closure $\overline{W_\gamma}$ is as defined in \Cref{eq:ClosureDefinition},
  and where, given a function $f : \CC \to \CC$,
  we write $\partial_1 f$ and $\partial_2 f$ for the (real) partial derivatives of $f(z)$
  with respect to the real- and imaginary parts of $z$, respectively.

  To prove \Cref{eq:DerivativeInSpaceAuxiliary1},
  let $R, \eps > 0$, $w_0 \in \CC$, and $j \in \{1,2\}$.
  Set
  \[
    \underline{\gamma} :
    \R^4 \to \CC,
    (a,b,x,y) \mapsto \gamma(a + i b, x + i y) ,
    \,\,\, \text{so that} \,\,\,
    \partial_j \gamma_z (w)
    = (\partial_j \underline{\gamma}) (\Re w, \Im w, \Re z, \Im z) .
  \]
  Using this identity, and since $\partial_j \underline{\gamma}$ is uniformly continuous
  on the compact set $\overline{B_1}(w_0) \times \overline{B_R}(0)$,
  we can choose $h \in (0, 1)$ such that
  $|\partial_j \gamma_z (w_0) - \partial_j \gamma_z (w_0 + t \, v)| \leq \eps$
  for all $z \in \overline{B_R}(0)$, all $v \in \CC$ with $|v| \leq 1$, and all $0 \leq t \leq h$.
  Now, define $v_1 := 1 \in \CC$ and $v_2 := i \in \CC$.
  Then we see for every $z \in \overline{B_R}(0)$ that
  \begin{align*}
    \Big|
      (\partial_j \gamma_z) (w_0)
      - \frac{\gamma (w_0 + h \, v_j, z) - \gamma (w_0, z)}{h}
    \Big|
    & = \Big|
          (\partial_j \gamma_z) (w_0)
          - \frac{\gamma_z (w_0 + h \, v_j) - \gamma_z (w_0)}{h}
        \Big| \\
    & = \Big|
          \frac{1}{h}
          \int_0^h
            (\partial_j \gamma_z) (w_0)
            - (\partial_j \gamma_z) (w_0 + t \, v_j)
          \, d t
        \Big|
      \leq \eps .
  \end{align*}
  Since $\eps > 0$ was arbitrary, and since
  $h^{-1} \cdot \bigl(\gamma(w_0 + h \, v_j , \bullet) - \gamma(w_0, \bullet)\bigr) \in W_\gamma$,
  this proves \Cref{eq:DerivativeInSpaceAuxiliary1}.

  \medskip{}

  \noindent
  \textbf{Step~2:} With notation as in Step~1, we show for $\gamma \in C^\infty(\CC^2; \CC)$ that
  \begin{equation}
    \big(
      z \mapsto (\partial^\alpha \gamma_z) (w_0)
    \big)
    \in \overline{W_\gamma}
    \qquad \forall \, w_0 \in \CC \text{ and } \alpha \in \N_0^2 .
    \label{eq:DerivativeInSpaceAuxiliary2}
  \end{equation}
  We prove this by induction on the multiindex $\alpha \in \N_0^2$,
  where the case $\alpha = 0$ is trivial.
  Furthermore, if \Cref{eq:DerivativeInSpaceAuxiliary2} holds for some $\alpha \in \N_0^2$
  and if $j \in \{ 1,2 \}$, then applying Step~1 with
  $\widetilde{\gamma}(w,z) = (\partial^\alpha \gamma_z)(w)$ instead of $\gamma$ shows that
  \[
    \big(
      z \mapsto (\partial^{\alpha + e_j} \gamma_z) (w_0)
    \big)
    = \big(
        z \mapsto (\partial_j \widetilde{\gamma}_z) (w_0)
      \big)
    \in \overline{W_{\widetilde{\gamma}}}
    = \overline{\strut
        \linspan_{\CC} \big\{ \widetilde{\gamma}(w, \bullet) \colon w \in \CC \big\}
      } .
  \]
  But by induction we have
  \(
    \widetilde{\gamma}(w,\bullet)
    = \big(
        z \mapsto (\partial^\alpha \gamma_z) (w)
      \big)
    \in \overline{W_\gamma}
    \vphantom{\sum_j}
  \)
  for all $w \in \CC$.
  In combination, we thus see that
  \(
    \strut
    \big(
      z \mapsto (\partial^{\alpha + e_j} \gamma_z) (w_0)
    \big)
    \in \overline{W_\gamma}
    ,
  \)
  completing the induction.

  \medskip{}

  \noindent
  \textbf{Step~3:} In this step, we finish the proof.
  Let $V, \varphi, \theta, n,m$ as in the claim of the proposition.
  Define $\gamma : \CC^2 \to \CC, (w,z) \mapsto \varphi(w \, z + \theta)$.
  Clearly, $\gamma \in C^\infty(\CC^2; \CC)$, since $\varphi \in C^\infty$.
  Therefore, Step~2 shows with $\varphi_{w,\theta}$ as in \Cref{eq:TranslationDilation} that
  \begin{align*}
    \bigg(
      z \mapsto \frac{\partial^{n+m}}{\partial a^n \, \partial b^m} \Big|_{a = b = 0}
                  \varphi \big( (a + i b) z + \theta \big)
    \bigg)
    & = \big(
          z \mapsto (\partial^{(n,m)} \gamma_z) (0)
        \big) \\
    & \in \overline{
            \linspan_{\CC} \big\{ \gamma(w, \bullet) \colon w \in \CC \big\}
          } \\[0.2cm]
    & =   \overline{
            \linspan_{\CC} \big\{ \varphi_{w,\theta} \colon w \in \CC \big\}
          }
      \subset \overline{V} ,
  \end{align*}
  since $\varphi \in V$ and since $V$ is a $\CC$-vector space that is
  closed under dilations and translations.
\end{proof}

\subsection{Inverse images of null-sets under linear forms remain null-sets}%
\label{sub:NullSetLinearInverseImage}

\begin{lemma}\label{lem:HyperplaneNullSet}
  Let $N \subset \CC \cong \R^2$ be a (Lebesgue) null-set
  and let $d \in \N$, $a \in \CC^d \setminus \{ 0 \}$, and $b \in \CC$.
  Then
  \[
    M := \big\{
           z \in \CC^d
           \quad\colon\quad
           a^T z + b \in N
         \big\}
    \subset \CC^d \cong \R^{2 d}
  \]
  is a null-set as well.
\end{lemma}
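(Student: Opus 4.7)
The plan is to reduce the claim to a one-dimensional statement via Tonelli's theorem, handling measurability by passing to a Borel superset of $N$.

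First, since $N$ is Lebesgue measurable with $\lambda_2(N) = 0$, by outer regularity of Lebesgue measure we can choose a Borel set $N' \subset \CC$ with $N \subset N'$ and $\lambda_2(N') = 0$. Define $\phi : \CC^d \to \CC, z \mapsto a^T z + b$, which is continuous. Then $M' := \phi^{-1}(N')$ is Borel, hence Lebesgue measurable, and $M \subset M'$; by completeness of the Lebesgue measure it therefore suffices to show $\lambda_{2d}(M') = 0$.

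Since $a \neq 0$, we may assume after permutation of coordinates that $a_1 \neq 0$. For $z = (z_1, z') \in \CC \times \CC^{d-1}$, write $c(z') := b + \sum_{k=2}^d a_k z_k$, so that the slice of $M'$ at $z'$ is
\[
  M_{z'}
  := \{ z_1 \in \CC \with \phi(z_1, z') \in N' \}
  = \{ z_1 \in \CC \with a_1 z_1 \in N' - c(z') \}
  = a_1^{-1} \, \bigl(N' - c(z')\bigr) .
\]
Translation by $-c(z')$ and multiplication by $a_1^{-1} \in \CC \setminus \{0\}$ are invertible real-linear (respectively affine) self-maps of $\CC \cong \R^2$, so by the change-of-variables formula they preserve Lebesgue null-sets. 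Hence $\lambda_2(M_{z'}) = 0$ for every $z' \in \CC^{d-1}$.

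Applying Tonelli's theorem (which is legitimate as $M'$ is Borel and $\Indicator_{M'} \geq 0$), I obtain
\[
  \lambda_{2d}(M')
  = \int_{\CC^{d-1}} \lambda_2(M_{z'}) \, d z'
  = 0 ,
\]
which finishes the argument. The only delicate point is the measurability issue in the first paragraph: a priori one cannot apply Tonelli directly to $M$, since $M$ is merely the preimage under a continuous map of an arbitrary Lebesgue null-set and need not itself be Borel; this is resolved by enclosing $N$ in a Borel null-set $N'$ and using completeness of Lebesgue measure.
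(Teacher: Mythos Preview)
Your proof is correct and follows essentially the same approach as the paper: pass to a Borel null-set containing $N$, single out a coordinate with $a_\ell \neq 0$, and apply Tonelli together with the fact that complex multiplication and translation preserve null-sets in $\CC \cong \R^2$. Your presentation is in fact somewhat cleaner than the paper's, which writes out the real $2 \times 2$ matrix corresponding to multiplication by $a_\ell$ explicitly.
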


\begin{proof}
  Since each null-set is contained in a Borel-measurable null-set, we can assume that
  $N$ is Borel-measurable.
  This implies that $M$ is Borel-measurable as well.

  For each $j \in \FirstN{d}$, write $a_j = A_j + i B_j$ and $b = \alpha + i \beta$,
  as well as ${z_j = x_j + i \, y_j}$ with $A_j, B_j, \alpha, \beta, x_j, y_j \in \R$.
  Choose $\ell \in \FirstN{d}$ with $a_\ell \neq 0$.
  Given $x \in \R^{d-1}$ and $x' \in \R$, let us define
  \[
    \nu (x, x')
    := \bigl( x_1,\dots, x_{\ell-1}, x', x_\ell, \dots, x_{d-1} \bigr)
    \in \R^d .
  \]
  Then, Tonelli's theorem shows for the $2d$-dimensional Lebesgue measure $\lambda_{2 d}$ that
  \begin{equation}
    \lambda_{2d}(M)
    = \int_{\R^{d-1} \times \R^{d-1}}
        \int_{\R^2}
          \Indicator_M \big( \nu (x, x') + i \, \nu (y, y') \big)
        \, d (x', y')
      \, d (x, y) .
    \label{eq:HyperplaceNullSetTonelliArgument}
  \end{equation}

  Now, given fixed $x,y \in \R^{d-1}$ and identifying a complex number $w = a + i \, b$
  with the vector $(a,b) \in \R^2$, we see because of
  \(
    (A_\ell + i \, B_\ell) \cdot (x' + i \, y')
    = A_\ell \, x' - B_\ell \, y' + i \cdot (A_\ell \, y' + B_\ell \, x')
  \)
  that there exists a vector $\theta = \theta(x,y,a,b) \in \R^2$ such that
  \[
    b + a^T \big[ \nu(x,x') + i \, \nu(y,y') \big]
    = b + \theta
      + \begin{pmatrix}
          A_\ell & - B_\ell \\
          B_\ell & A_\ell
        \end{pmatrix}
        \begin{pmatrix}
          x' \\ y'
        \end{pmatrix}
    =: b + \theta + \Gamma \begin{pmatrix} x' \\ y' \end{pmatrix} ,
  \]
  where one easily sees $\det \Gamma = A_\ell^2 + B_\ell^2 = |a_\ell|^2 > 0$.
  The above identity shows that
  \begin{align*}
    \nu(x, x') + i \, \nu(y, y') \in M \quad
    & \Longleftrightarrow \quad
    b + a^T \big[ \nu(x,x') + i \, \nu(y,y') \big] \in N \\
    & \Longleftrightarrow \quad
    (x', y') \in \Gamma^{-1} \cdot (N - b - \theta) ,
  \end{align*}
  where elementary properties of the Lebesgue measure show that
  $\Gamma^{-1} \cdot (N - b - \theta)$ is a null-set, since $N$ is.
  In other words, we have shown with the $2$-dimensional Lebesgue measure $\lambda_2$ that
  \[
    \int_{\R^2}
      \Indicator_M \big( \nu (x, x') + i \, \nu (y, y') \big)
    \, d (x', y')
    = \lambda_2 \bigl( \Gamma^{-1} \cdot (N - b - \theta) \bigr)
    = 0
    \quad \forall \, (x,y) \in \R^{d-1} \times \R^{d-1} .
  \]
  Combined with \Cref{eq:HyperplaceNullSetTonelliArgument},
  this shows as claimed that $\lambda_{2d}(M) = 0$.
\end{proof}

\subsection{Weakly polyharmonic functions are smooth}%
\label{sub:WeaklyPolyharmonicIsPolyharmonic}

In this section, we show that each ``weakly polyharmonic function''
coincides almost everywhere with a smooth polyharmonic function.

\begin{lemma}\label{lem:WeaklyPolyharmonicIsPolyharmonic}
  Let $\emptyset \neq U \subset \CC$ be open, let $m \in \N_0$,
  and let $f \in L_{\loc}^1(U; \R)$ be weakly polyharmonic of order $m$, in the sense that
  \begin{equation}
    \int_{U}
      f(z) \cdot \Delta^m \varphi (z)
    \, d z
    = 0
    \qquad \forall \, \varphi \in C_c^\infty (U; \R) .
    \label{eq:WeakPolyharmonicity}
  \end{equation}
  Then there exists a function $g \in C^\infty (U; \R)$ with $\Delta^m g \equiv 0$
  satisfying $f = g$ almost everywhere.
\end{lemma}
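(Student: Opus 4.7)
The plan is to run a Weyl's-lemma-style argument: construct a radial mollifier $\eta$ that preserves smooth polyharmonic functions of order $m$, and use it to show that the mollified family $f_\eps := f * \eta_\eps$ is eventually constant in $\eps$ on every compact subset of $U$, thereby producing a smooth representative of $f$.

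The crucial ingredient is a radial $\eta \in C_c^\infty(\CC;\R)$ with $\int_{\CC} \eta\,dz = 1$ such that $u * \eta_\eps = u$ holds (wherever both sides are defined) for every $u \in C^\infty$ satisfying $\Delta^m u \equiv 0$; here $\eta_\eps(z) := \eps^{-2}\eta(z/\eps)$. Writing $\eta(z) = \tilde\rho(|z|^2)$ with $\tilde\rho \in C_c^\infty((0,\infty);\R)$ (which automatically makes $\eta$ smooth on all of $\CC$) and invoking Pizzetti's spherical-mean formula in dimension two,
\[
  \frac{1}{2\pi}\int_0^{2\pi} u(z + r e^{i\theta})\,d\theta
  = \sum_{k=0}^{m-1} \frac{r^{2k}}{4^k (k!)^2}\,(\Delta^k u)(z)
\]
(the series terminating because $\Delta^m u \equiv 0$), the requirement $u * \eta_\eps = u$ reduces via polar coordinates to the $m$ moment conditions $\int_0^\infty t^k\,\tilde\rho(t)\,dt = 1/\pi$ for $k = 0$ and $= 0$ for $1 \leq k \leq m-1$. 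Since the monomials $\{t^k\}_{k=0}^{m-1}$ are linearly independent, these conditions are simultaneously achievable within $C_c^\infty((0,\infty);\R)$.

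With such $\eta$ fixed, set $U_\eps := \{z \in U : \overline{B_\eps}(z) \subset U\}$ and $f_\eps := f * \eta_\eps$ on $U_\eps$. Standard mollifier theory gives $f_\eps \in C^\infty(U_\eps;\R)$ and $f_\eps \to f$ in $L_{\loc}^1(U)$, valid also for signed $\eta$ of integral $1$ since $\eta \in L^1$. For $z_0 \in U_\eps$, the function $w \mapsto \eta_\eps(z_0 - w)$ is a test function in $C_c^\infty(U;\R)$, and translation-invariance of $\Delta$ combined with \eqref{eq:WeakPolyharmonicity} yields $\Delta^m f_\eps(z_0) = 0$; hence $f_\eps$ is smooth and polyharmonic of order $m$ on $U_\eps$. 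For $0 < \delta < \eps$, both $f_\eps$ and $f_\delta$ are smooth polyharmonic of order $m$ on $U_\eps$, so the defining property of $\eta$ gives $f_\eps * \eta_\delta = f_\eps$ and $f_\delta * \eta_\eps = f_\delta$ on $U_{\eps+\delta}$. Since Fubini forces both sides to equal $f * (\eta_\eps * \eta_\delta)$, we conclude $f_\eps = f_\delta$ on $U_{\eps+\delta}$. Thus $(f_\eps)$ is eventually constant on every compact subset of $U$, and defines a smooth function $g \in C^\infty(U;\R)$ with $\Delta^m g \equiv 0$ and $f = g$ almost everywhere.

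The main obstacle is the construction of the polyharmonic-preserving mollifier $\eta$: once Pizzetti's formula is available, its existence reduces to an elementary linear-algebraic feasibility argument, but Pizzetti itself requires either a citation or a short direct proof (Taylor expansion of $u$ around $z$ combined with the vanishing of odd-degree angular moments over the unit circle). Every other step is routine mollifier bookkeeping.
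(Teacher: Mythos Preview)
Your argument is correct and genuinely different from the paper's. The paper proceeds by induction on $m$: assuming the result for $m$, a distribution $\psi$ with $\Delta^{m+1}\psi = 0$ has $\Delta\psi$ smooth by the induction hypothesis, and then the classical Weyl's lemma (cited as a black box) for the inhomogeneous equation $\Delta\psi = h$ with smooth $h$ forces $\psi$ to be smooth. Your route is a direct Weyl-type construction: a Pizzetti-tailored radial mollifier with $m$ prescribed moments makes $f_\eps$ independent of $\eps$ via the identity $f_\eps * \eta_\delta = f * (\eta_\eps * \eta_\delta) = f_\delta * \eta_\eps$. The paper's proof is shorter and offloads the analytic work to a single citation; yours is more self-contained and yields the smooth representative explicitly, at the cost of invoking (or proving) Pizzetti's formula and checking that the approximate-identity convergence $f_\eps \to f$ in $L^1_{\loc}$ survives signed $\eta$. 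Two small points worth tightening when you write it out: normalize $\supp\tilde\rho \subset (0,1)$ so that your definition of $U_\eps$ matches $\supp\eta_\eps \subset \overline{B_\eps}(0)$, and note that Pizzetti is \emph{exact} (not merely asymptotic) for polyharmonic $u$ because such $u$ are real-analytic.
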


\begin{rem*}
  Applying the lemma separately to $\Re f$ and $\Im f$, one sees that if $f \in L_{\loc}^1(U; \CC)$
  and $\int_{U} f(z) \Delta^m \varphi(z) \, d z = 0$ for all $\varphi \in C_c^\infty (U; \R)$,
  then there exists $g \in C^\infty(U; \CC)$ with $\Delta^m g \equiv 0$ satisfying
  $f = g$ almost everywhere.
\end{rem*}

\begin{proof}
  We first prove by induction on $m \in \N_0$ that if $\psi \in \CalD'(U)$ is a distribution
  satisfying $\Delta^m \psi = 0$, then there exists a smooth function $g \in C^\infty(U; \R)$
  such that $\psi = g$ as distributions; here, we only consider ``real-valued'' distributions,
  meaning that $\psi : C_c^\infty(U; \R) \to \R$ is a linear functional that is continuous with
  respect to the usual topology on $C_c^\infty(U; \R)$.
  We refer to \cite[Chapter~6]{RudinFA} regarding the details of this topology
  and the formalism of distributions.

  For the case $m = 0$, the assumption $\Delta^m \psi = 0$ simply means $\psi = 0$,
  so that the claim is trivial.
  Now, assume the claim holds for some $m \in \N_0$, and let $\psi \in \CalD'(U)$ with
  $\Delta^{m+1} \psi = 0$.
  Define $\widetilde{\psi} := \Delta \psi \in \CalD'(U)$ and note $\Delta^m \widetilde{\psi} = 0$.
  Thus, by the induction hypothesis, there is $h \in C^\infty (U; \R)$
  with $\Delta \psi = \widetilde{\psi} = h$.
  Now, \emph{Weyl's lemma} (see for instance \cite[§2]{StroockWeylLemma}), implies that
  $\psi = g$ (in the sense of distributions) for some $g \in C^\infty(U; \R)$.

  Finally, we prove the lemma.
  The assumption in \Cref{eq:WeakPolyharmonicity} means that ${\Delta^m f = 0}$
  in the sense of distributions.
  By what we showed above, this implies existence of a function $g \in C^\infty(U; \R)$
  with $f = g$ as distributions, i.e.,
  ${\int_{U} f(z) \, \varphi(z) \, dz = \int_{U} g(z) \, \varphi(z) \, d z}$
  for all $\varphi \in C_c^\infty (U; \R)$.
  By the \emph{fundamental lemma of calculus of variations}
  (see for instance \cite[Lemma~4.22]{AltFA}), this implies $f = g$ almost everywhere.
  Finally, we see by partial integration that
  \(
    \int_{U} \Delta^m g (z) \, \varphi(z) \, d z
    = \int_{U} g (z) \, \Delta^m \varphi(z) \, d z
    = \int_{U} f (z) \, \Delta^m \varphi(z) \, d z
    = 0
  \)
  for all $\varphi \in C_c^\infty (U; \R)$.
  By the fundamental lemma of calculus of variations, this implies $\Delta^m g = 0$
  almost everywhere, and hence everywhere by continuity.
\end{proof}

\subsection{Proof of Lemma~\ref{lem:NetworkClosureProperties}}%
\label{sub:NetworkClosurePropertiesProof}

\begin{proof}[Proof of \Cref{lem:NetworkClosureProperties}]
  The whole proof will use the convention that $\Phi = \CalN_\sigma \Theta$ and
  $\Psi = \CalN_\sigma \Lambda$ with $\Theta = \bigl( (A_0,b_0),\dots,(A_L,b_L) \bigr)$
  and $\Lambda = \big( (B_0,c_0),\dots,(B_T, c_T) \big)$, where possibly $T = L$.
  Furthermore, $N_\ell, M_t \in \N$ are chosen such that $A_\ell \in \CC^{N_{\ell+1} \times N_\ell}$
  and $b_\ell \in \CC^{N_{\ell+1}}$ as well as $B_t \in \CC^{M_{t+1} \times M_t}$
  and $c_t \in \CC^{M_{t+1}}$ for $\ell \in \{ 0,\dots,L \}$ and $t \in \{ 0,\dots,T \}$.

  \medskip{}

  \noindent
  \textbf{Ad \ref{enu:NetworksVectorSpace}}
  In this case, $T = L$ and furthermore $N_0 = M_0 = d$ and $N_{L+1} = M_{L+1} = 1$.
  Let us define $K_0 := d$, $K_{L+1} := 1$ and $K_\ell := N_\ell + M_\ell$
  for $\ell \in \{ 1,\dots,L \}$.
  In addition, define
  \[
    C_0 := \left( \begin{matrix} A_0 \\ B_0 \end{matrix} \right) \in \CC^{K_1 \times K_0}
    \quad \text{and} \quad
    C_\ell := \left(
                \begin{matrix}
                  A_\ell & 0 \\
                  0      & B_\ell
                \end{matrix}
              \right)
           \in \CC^{K_{\ell + 1} \times K_\ell}
    \quad\text{for}\quad
    \ell \in \{ 1,\dots,L-1 \} ,
  \]
  as well as $C_L := \bigl(\alpha A_L \,\big|\, \beta B_L\bigr) \in \CC^{1 \times K_L}$,
  and furthermore
  \[
    e_\ell := \left( \begin{matrix} b_\ell \\ c_\ell \end{matrix} \right) \in \CC^{K_{\ell+1}}
    \quad \text{for} \quad \ell \in \{ 0,\dots,L-1 \}
    \qquad \text{and} \qquad
    e_L := \alpha \, b_L + \beta \, c_L \in \CC^{K_{L+1}} .
  \]
  Setting $\Gamma := \big( (C_0,e_0),\dots,(C_L,e_L) \big)$,
  it is straightforward but slightly tedious to verify that
  ${\CalN_\sigma \Gamma \in \NN_{\sigma,L}^d}$ and
  \(
    \CalN_\sigma \Gamma
    = \alpha \, \CalN_\sigma \Theta + \beta \, \CalN_\sigma \Lambda
    = \alpha \, \Phi + \beta \, \Psi
    .
  \)

  \medskip{}

  \noindent
  \textbf{Ad \ref{enu:NetworksLowDimToHighDim}}
  In this case, $N_0 = 1$, so that ${\widetilde{A_0} := A_0 \, a^T \in \CC^{N_1 \times d}}$ and
  ${\widetilde{b_0} := b_0 + A_0 \, b \in \CC^{N_1}}$ are well-defined and satisfy
  $\widetilde{b_0} + \widetilde{A_0} z = b_0 + A_0 \bigl(b + a^T z\bigr)$ for $z \in \CC^d$.
  Based on this, it is easy to verify for
  $\Gamma := \big( (\widetilde{A_0}, \widetilde{b_0}), (A_1,b_1),\dots,(A_L,b_L) \big)$
  that ${\CalN_\sigma \Gamma (z) = (\CalN_\sigma \Theta)(b + a^T z) = \Phi(b + a^T z) = \Xi(z)}$,
  and hence $\Xi \in \NN_{\sigma,L}^d$.

  \medskip{}

  \noindent
  \textbf{Ad \ref{enu:NetworksHighDimToLowDim}}
  In this case, $N_0 = d$, so that $\widetilde{A_0} := A_0 \, a \in \CC^{N_1 \times 1}$
  and $\widetilde{b_0} := b_0 + A_0 \, b \in \CC^{N_1}$ are well-defined and satisfy
  $b_0 + A_0(b + z a) = \widetilde{b_0} + \widetilde{A_0} z$ for all $z \in \CC$.
  Using this identity, it is easy to verify for
  $\Gamma := \big( (\widetilde{A_0}, \widetilde{b_0}), (A_1,b_1),\dots,(A_L,b_L) \big)$
  that $\CalN_\sigma \Gamma (z) = (\CalN_\sigma \Theta) (b + z a) = \Xi(z)$ for all $z \in \CC$,
  and hence $\Xi \in \NN_{\sigma,L}^1$.

  \medskip{}

  \noindent
  \textbf{Ad \ref{enu:NetworksComposition}}
  Note that $N_0 = d$ and $N_{L+1} = 1 = M_0$, so that $C_L := B_0 A_L \in \CC^{M_1 \times N_L}$
  and $e_L := c_0 + B_0 \, b_L \in \CC^{M_1}$ are well-defined.
  Furthermore, let us define $C_\ell := A_\ell \text{ and } e_\ell := b_\ell$
  for $\ell \in \{ 0,\dots,L-1 \}$, as well as
  $C_{t + L} := B_t \text{ and } e_{t + L} := c_t$ for $t \in \{ 1,\dots,T \}$.
  It is straightforward to verify for $\Gamma = \big( (C_0,e_0),\dots,(C_{L+T}, e_{L+T}) \big)$
  that $\CalN_\sigma \Gamma = (\CalN_\sigma \Lambda) \circ (\CalN_\sigma \Theta) = \Psi \circ \Phi$
  and hence $\Psi \circ \Phi \in \CalN_{\sigma, L + T}^d$, as claimed.
\end{proof}

\subsection{Technical details of the proof of Theorem~\ref{thm:DeepNetworksNecessaryCriterion}}%
\label{sub:AppendixDeepNetworksNecessityPolyharmonicity}

\begin{proof}[Proof of \Cref{eq:DeepNetworksNecessityPolyharmonicity}
  in the proof of \Cref{thm:DeepNetworksNecessaryCriterion}]

  Let $\Psi \in \NN_{\sigma,L}^1$ with $\Psi = \CalN_\sigma \Theta$ where
  $\Theta = \big( (A_0,b_0),\dots,(A_L,b_L) \big)$ with $A_\ell \in \CC^{N_{\ell+1} \times N_\ell}$
  and $b_\ell \in \CC^{N_{\ell+1}}$, where ${N_0 = N_{L+1} = 1}$.
  We define $u_{-1} (z) := z$ and inductively
  \[
    v_\ell (z) := b_\ell + A_\ell \, u_{\ell-1}(z) \in \CC^{N_{\ell+1}}
    \qquad \text{and} \qquad
    u_\ell (z) := \sigma(v_\ell(z)) \in \CC^{N_{\ell+1}}
    \quad \text{for } \ell \in \{ 0,\dots,L \} ,
  \]
  with $\sigma$ applied componentwise.
  Then $\Psi(z) = v_L (z)$.

  \medskip{}

  \noindent
  \textbf{Proof of \Cref{eq:DeepNetworksNecessityPolyharmonicity} in Case a)}
  In this case there exists $p \in \CC[X,Y]$ satisfying ${\sigma(z) = p(z, \bar{z})}$
  for all $z \in \CC$.
  With $N \geq \deg p$, we will prove \Cref{eq:DeepNetworksNecessityPolyharmonicity}
  for ${m = N^{L}+1}$.
  In fact, we will show by induction on $\ell \in \{ 0,\dots,L \}$ that for each
  $k \in \{ 1,\dots,N_{\ell+1} \}$ there exists a polynomial $q_{\ell,k} \in \CC[X,Y]$
  with $\deg q_{\ell,k} \leq N^\ell$ and $(v_\ell (z))_k = q_{\ell,k}(z,\bar{z})$.
  Once this is shown, it follows that
  \(
    \Psi(z)
    = (v_L(z))_1
    = q_{L,1}(z,\bar{z})
    = \sum_{k,\ell=0}^{N^L}
        b_{k,\ell} \, z^k \bar{z}^\ell
  \)
  for certain $b_{k,\ell} \in \CC$.
  In view of \Cref{eq:WirtingerLaplaceRepresentation} and because of
  $\del^m \delbar^m (z^k \bar{z}^\ell) = (\del^m z^k) \cdot \overline{(\del^m z^\ell)} = 0$
  for $m > \max \{ k,\ell \}$, this will prove \Cref{eq:DeepNetworksNecessityPolyharmonicity}
  for $m = N^L + 1$.

  To prove the existence of the $q_{\ell,k}$, given a polynomial
  ${q(X,Y) = \sum_{k,\ell} a_{k,\ell} \, X^k Y^\ell}$, let us define
  ${\widetilde{q}(X,Y) := \sum_{k,\ell} \overline{a_{k,\ell}} \, X^\ell Y^k}$.
  It is then straightforward to verify that $\overline{q(z, \bar{z})} = \widetilde{q}(z, \bar{z})$.
  For $\ell = 0$, note that $(v_0(z))_k = q_{0,k}(z,\bar{z})$ for
  $q_{0,k}(X,Y) := (b_0)_k + (A_0)_{k,1} \, X$.
  Next, assume that the claim holds for some $\ell \in \{ 0,\dots,L-1 \}$.
  For $k \in \{ 1,\dots,N_{\ell+1} \}$, define
  $r_{\ell,k} (X,Y) := p\bigl(q_{\ell,k}(X,Y), \widetilde{q_{\ell,k}} (X,Y)\bigr)$.
  It is easy to see $r_{\ell,k} \in \CC[X,Y]$ and furthermore
  \({
    \deg r_{\ell,k}
    \leq \deg (p)
         \cdot \max
               \{
                 \deg q_{\ell,k},
                 \deg \widetilde{q_{\ell,k}}
               \}
    \leq N^{\ell+1} .
  }\)
  Moreover,
  \begin{align*}
    \bigl(u_{\ell} (z)\bigr)_k
    & = \sigma\bigl( (v_\ell(z))_k \bigr)
      = \sigma\bigl(q_{\ell,k}(z,\bar{z})\bigr)
      = p\bigl(\, q_{\ell,k}(z,\bar{z}), \,\, \overline{q_{\ell,k}(z,\bar{z})} \,\bigr) \\
    & = p\bigl(\, q_{\ell,k}(z,\bar{z}), \,\, \widetilde{q_{\ell,k}}(z,\bar{z}) \,\bigr)
      = r_{\ell,k}(z, \bar{z}) .
  \end{align*}
  Finally, it is straightforward to check for
  \(
    q_{\ell+1,k}(X,Y)
    := (b_{\ell+1})_k
       + \sum_{j=1}^{N_{\ell+1}}
           (A_{\ell+1})_{k,j} \, r_{\ell,j}(X, Y)
  \)
  that $\deg q_{\ell+1,k} \leq N^{\ell+1}$ and $(v_{\ell+1}(z))_k = q_{\ell+1,k}(z,\bar{z})$,
  as claimed.

  \medskip{}

  \noindent
  \textbf{Proof of \Cref{eq:DeepNetworksNecessityPolyharmonicity} in Case b)}
  In this case, we prove \Cref{eq:DeepNetworksNecessityPolyharmonicity} for $m = 1$.
  Indeed, we will prove by induction on $\ell \in \{ 0,\dots,L \}$
  that all components $(v_\ell)_k : \CC \to \CC$ of $v_\ell$ are jointly holomorphic
  or jointly antiholomorphic.
  Here we say that $f : \CC \to \CC$ is antiholomorphic if $\overline{f}$ is holomorphic.

  For the case $\ell = 0$, note that all components $(v_0 (z))_k = (b_0)_k + (A_0)_{k,1} z$
  are holomorphic.
  For the induction step, let $\ell \in \{ 0,\dots, L-1 \}$ and assume that
  all $(v_\ell)_k$ are holomorphic or all $(v_\ell)_k$ are antiholomorphic.
  Note that if $f : \CC \to \CC$ is holomorphic, then so is $z \mapsto \overline{f(\bar{z})}$;
  this can be seen for instance using the power series expansion
  $\overline{f(\bar{z})} = \sum_{n=0}^\infty \overline{a_n} z^n$
  for ${f(z) = \sum_{n=0}^\infty a_n z^n}$.
  Thus, for functions $f,g : \CC \to \CC$ that are each holomorphic or antiholomorphic,
  there are four cases:
  \emph{i)} if $f,g$ are holomorphic, then so is $f \circ g$;
  \emph{ii)} if $f$ is holomorphic and $g$ antiholomorphic, then
  $\overline{f(g(z))} = \overline{f(\overline{\bar{g}(z)})}$
  is holomorphic and hence $f \circ g$ is antiholomorphic;
  \emph{iii)} if $f$ is antiholomorphic and $g$ holomorphic, then
  $\overline{f \circ g} = \overline{f} \circ g$ is holomorphic,
  so that $f \circ g$ is antiholomorphic;
  \emph{iv)} if $f,g$ are both antiholomorphic,
  then $f(g(z)) = \overline{\bar{f}(\overline{\bar{g}(z)})}$ is holomorphic.
  Overall, these considerations show that $(u_\ell (z))_k = \sigma\bigl( (v_\ell(z))_k \bigr)$
  is holomorphic for all $k$ or antiholomorphic for all $k$.
  Since
  \(
    (v_{\ell+1}(z) )_k
    = (b_{\ell+1})_k
      + \sum_{j =1}^{N_{\ell+1}}
          (A_{\ell+1})_{k,j} \, (u_\ell (z))_j
    ,
  \)
  this shows that all component functions of $v_{\ell+1}$ are jointly holomorphic
  or jointly antiholomorphic.

  To complete the proof, note that $\Psi = (v_L)_1$ is holomorphic or antiholomorphic.
  As seen in \Cref{sec:WirtingerCalculus}, we have $\delbar f \equiv 0$
  if $f$ is holomorphic, and hence $\del f = \overline{\delbar \overline{f}} \equiv 0$
  if $f$ is antiholomorphic.
  In any case, \Cref{eq:WirtingerLaplaceRepresentation} shows $\Delta f \equiv 0$ if
  $f$ is holomorphic or antiholomorphic; in particular $\Delta \Psi \equiv 0$, as claimed.
\end{proof}

\section*{Acknowledgments}

The discussions with Götz Pfander, Dae Gwan Lee and Andrei Caragea that led to the questions
answered in this paper are greatly appreciated.
Many thanks are due to Andrei Caragea for his help in improving the presentation of this paper.
The author acknowledges support by the German Science Foundation (DFG)
in the context of the Emmy Noether junior research group VO 2594/1-1.

\bibliographystyle{abbrvurl}

{\footnotesize
\bibliography{references}
}

\end{document}